\documentclass[11pt]{article}
\usepackage[T1]{fontenc}
\usepackage[utf8]{inputenc}
\usepackage{amsmath,amssymb,amsthm,mathtools}
\usepackage{microtype}
\usepackage{enumitem}
\usepackage{booktabs}
\usepackage[mathlines]{lineno}
\usepackage{xcolor}
\usepackage[colorlinks=true,allcolors=blue]{hyperref}
\allowdisplaybreaks
\usepackage[left=2.cm,right=2.cm,top=1.8cm,bottom=1.8cm]{geometry}

\newtheorem{theorem}{Theorem}[section]
\newtheorem{proposition}[theorem]{Proposition}
\newtheorem{lemma}[theorem]{Lemma}

\newtheorem{definition}[theorem]{Definition}
\newtheorem{assumption}[theorem]{Assumption}
\theoremstyle{remark}
\newtheorem{remark}[theorem]{Remark}

\newcommand{\R}{\mathbb{R}}
\newcommand{\eps}{\varepsilon}
\newcommand{\cK}{\mathcal{K}}

\newcommand{\cL}{\mathcal{L}}
\newcommand{\cLgeoLower}{\mathcal{L}_{\mathrm{geo},*}}
\newcommand{\cLgeoUpper}{\mathcal{L}_{\mathrm{geo}}^{*}}
\newcommand{\cJlower}{\mathcal{J}_{*}}
\newcommand{\cJupper}{\mathcal{J}^{*}}
\newcommand{\norm}[1]{\left\lVert #1\right\rVert}
\newcommand{\abs}[1]{\left\lvert #1\right\rvert}

\newcommand{\tr}{\operatorname{tr}}

\newcommand{\dd}{\,\mathrm{d}}

\newcommand{\dist}{\operatorname{dist}}

\title{Regularized Subjective-Surface Flow with Monotone Reaction: Viscosity Well-Posedness and Convergence}

\author{Markjoe O. Uba\\
\small School of Mathematical and Statistical Sciences, Northern Illinois University, DeKalb, Illinois, USA\\
\small \texttt{markjoeuba@gmail.com}}
\date{}

\hypersetup{
  pdftitle={Regularized Subjective-Surface Flow with Monotone Reaction: Viscosity Well-Posedness and Convergence},
  pdfauthor={Markjoe O. Uba}
}

\begin{document}
\setlength{\abovedisplayskip}{2.5pt plus 1pt minus 1pt}
\setlength{\belowdisplayskip}{2.5pt plus 1pt minus 1pt}
\setlength{\abovedisplayshortskip}{1.5pt plus .5pt minus .5pt}
\setlength{\belowdisplayshortskip}{1.5pt plus .5pt minus .5pt}
\setlength{\jot}{2pt}
\maketitle

\begin{abstract}
We study the vanishing-regularization limit of the regularized subjective-surface
model introduced for touching and dividing cell nuclei and previously analyzed for
fixed positive regularization parameters. On a smooth bounded domain
$\Omega\subset\mathbb R^d$, the model is
\[
\begin{aligned}
\partial_tu^{\eps,\nu}
={}&\nu\Delta u^{\eps,\nu}
+A_\eps(\nabla u^{\eps,\nu})
\operatorname{div}\!\left(
G(x)\frac{\nabla u^{\eps,\nu}}
{A_\eps(\nabla u^{\eps,\nu})}
\right)
-\mu R_\eta(x,u^{\eps,\nu}),\\
&A_\eps(p)=\sqrt{\eps^2+|p|^2},
\qquad R_\eta(x,r)=\Lambda(x)H_\eta(r-q),
\end{aligned}
\]
subject to homogeneous Dirichlet data and a prescribed initial profile. We formulate
the limiting weighted level-set mean-curvature
equation as a viscosity initial-boundary value problem and establish global existence,
uniqueness, preservation of $[0,1]$, and nonexpansive dependence on the initial data.
We determine the direct upper and lower limiting values of the regularized principal
operators at zero gradient and identify the additional argument needed to recover the
geometric directional values. For every $T>0$, the regularized solutions converge
uniformly to the unique viscosity solution on $[0,T]\times\overline\Omega$ as
$\eps,\nu\to0$, independently of their relative rates of decay. This determines the
geometric limit of the regularized model used in 3D and 3D+time microscopy segmentation.
\end{abstract}

\noindent\textbf{Keywords:} subjective surfaces; monotone reaction; viscosity solutions;
weighted level-set mean-curvature flow; vanishing regularization; Dirichlet boundary conditions;
image segmentation.

\medskip
\noindent\textbf{Mathematics Subject Classification (2020):} 35D40, 35K65, 35K59, 53E10, 68U10.

\section{Introduction}\label{sec:intro}

Microscopy images of developing tissues frequently contain nuclei whose shapes change,
separate, or approach one another closely. In such cases, portions of the interface between
neighboring nuclei may be weak or visually incomplete, and the segmentation must remain
coherent as the geometry evolves across spatial locations and successive image frames. These
features motivate a geometric evolution that combines image information with prescribed data
about nearby nuclei.

The generalized subjective-surface equation is an image-weighted level-set curvature model for
surface completion and segmentation. The image-dependent edge coefficient, constructed from
the image data, becomes small near strong image edges and thereby slows the evolving level
surface near object boundaries. Additional prescribed coefficients may encode information
obtained from thresholding or neighboring-object preprocessing; see
\cite{MikulaRemesikova2009,SartiMalladiSethian2000,umknskp2020,UbaMikulaPark2023}.

A companion paper \cite{Uba2026Regularized} studies the same regularized model for
fixed $\eps,\nu>0$ at the classical level, including global classical well-posedness,
finite-time Schauder estimates, preservation of the interval $[0,1]$, and nonexpansive
dependence on the initial datum. In the present analysis, the fixed-parameter Dirichlet
viscosity problems form the approximation family. The principal results establish global
viscosity well-posedness for the limiting geometric equation, identify the zero-gradient
selection mechanism, and prove uniform convergence of the approximation family as
$(\eps,\nu)\to(0,0)$.

For a single 3D microscopy image, $d=3$ and $x=(x_1,x_2,x_3)$. For 3D+time
data, $d=4$ and
\[
x=(x_1,x_2,x_3,\theta),
\]
where $\theta$ denotes physical time in the image data and $t$ denotes artificial segmentation
time. In both settings, the unknown remains $u(t,x)$, and the image intensity and prescribed
coefficients are defined on the corresponding data domain.

Formally, the limiting equation is
\[
 \partial_tu
 =G(x)|\nabla u|\operatorname{div}\!\left(\frac{\nabla u}{|\nabla u|}\right)
  +\nabla G(x)\cdot\nabla u-\mu R_\eta(x,u).
\]
The weighted curvature operator is undefined at points where $\nabla u=0$. In the viscosity
formulation, its value at such zero-gradient points is determined by the upper and lower
directional limits prescribed by level-set mean-curvature flow
\cite{ChenGigaGoto1991,EvansSpruck1991,Giga2006}. However, the direct upper and lower
limiting values of the regularized operators, when the gradient and regularization parameters
vanish simultaneously, do not in general coincide with these geometric directional limits.
Consequently, the standard stability theory for discontinuous operators does not by itself
identify the intended limiting equation.

The main contributions are:
\begin{enumerate}[label=(\roman*),leftmargin=2.6em]
\item a comparison principle and a Perron construction yielding global viscosity
      well-posedness for the weighted geometric equation with monotone reaction;
\item preservation of the interval $[0,1]$ and nonexpansive stability in the
      supremum norm;
\item an explicit computation of the upper and lower limiting values of the
      regularized principal operators at zero gradient, showing that the direct
      discontinuous-operator limits do not select the intended geometric operator;
\item an equivalent viscosity formulation at points where the spatial gradient vanishes and a
      stability argument that treats the geometric and reaction terms separately,
      yielding uniform convergence of the regularized solutions on
      $[0,T]\times\overline{\Omega}$ for every $T>0$, independently of the path
      along which the regularization parameters tend to zero.
\end{enumerate}

The preservation of $[0,1]$ is particularly relevant to the automatic and semi-automatic
segmentation of microscopy images, where $u$ is interpreted as a normalized segmentation
function and the object boundary is represented by the level set $\{u=0.5\}$ \cite{umknskp2020,UbaMikulaPark2023}. Nonexpansive
stability ensures that perturbations in the initial segmentation are not amplified during the
evolution.

When $\partial\Omega$ is smooth and strictly mean-convex, $G$ is constant in a neighborhood of
$\partial\Omega$, and the initial data vanish in that neighborhood, the viscosity solution
extends continuously to $[0,T]\times\overline\Omega$ and equals the homogeneous Dirichlet data
on $[0,T]\times\partial\Omega$. The limiting results hold in every dimension $d\ge2$ and for
each fixed transition width $\eta>0$ satisfying the stated threshold conditions.

Section~\ref{sec:coefficients} constructs the prescribed coefficient functions.
Section~\ref{sec:model} presents the regularized model, the limiting viscosity formulation, and
all main results. Section~\ref{sec:structural} gives the structural estimates.
Section~\ref{sec:comparison-perron} establishes comparison, boundary continuity, Perron
solvability, and stability for prescribed continuous reaction data. Section~\ref{sec:limit-analysis}
develops the vanishing-regularization analysis. Section~\ref{sec:main-proofs} proves the main
theorems, and Section~\ref{sec:conclusion} concludes the paper.

\section{Construction of the prescribed coefficient functions}\label{sec:coefficients}

The coefficient definitions below are shared with the fixed-parameter model in
\cite{Uba2026Regularized}; here they specify the prescribed data for the geometric
viscosity problem and the vanishing-regularization analysis.

\subsection{Image-dependent edge coefficient}

Let $I^0$ be the given image intensity and let $I^{TH}$ denote the locally thresholded image
constructed during preprocessing. A generalized edge detector used in subjective-surface
segmentation has the form
\begin{equation}\label{eq:edge-coefficient}
G^0(x)=g\!\left(\delta_0\left|\nabla(G_\sigma*I^0)(x)\right|
+\vartheta_0\left|\nabla(G_\sigma*I^{TH})(x)\right|\right),
\end{equation}
where $G_\sigma$ is a smoothing kernel, $g$ is positive and nonincreasing, and
$\delta_0,\vartheta_0\in[0,1]$ determine the relative influence of the original and thresholded
image information. The analysis uses the processed field, denoted by $G$, as a fixed,
smooth, and strictly positive coefficient.

\subsection{Prescribed interaction weight and monotone reaction}

Let $c_0$ denote the current candidate center and let $\mathcal N(c_0)$ be the finite set of
neighboring candidates selected during preprocessing. For each $j\in\mathcal N(c_0)$, let
$\omega_j$ be a nonnegative smooth weight and let $u_j^b$ be a prescribed function describing
the region associated with candidate $j$. A representative interaction weight is
\begin{equation}\label{eq:interaction-weight}
\Lambda(x)=\sum_{j\in\mathcal N(c_0)}
\omega_j(x)H_\eta(u_j^b(x)-q).
\end{equation}
The functions $\omega_j$ and $u_j^b$ are determined before the PDE is evolved, so $\Lambda$
remains a prescribed coefficient throughout the analysis.

Choose $q\in(0,1)$ and
\[
0<\eta<\min\{q,1-q\}.
\]
Let $H\in C^\infty(\mathbb R)$ satisfy
\[
0\le H\le1,\qquad H'\ge0,\qquad
H(s)=0\ \text{for }s\le-1,\qquad H(s)=1\ \text{for }s\ge1,
\]
and define
\begin{equation}\label{eq:profiles}
H_\eta(s)=H(s/\eta),\qquad
h_\eta(s)=H_\eta'(s)=\eta^{-1}H'(s/\eta),\qquad
P_\eta(s)=\int_{-\infty}^sH_\eta(r)\,\dd r.
\end{equation}
The reaction density and its associated interaction functional are
\begin{equation}\label{eq:reaction}
R_\eta(x,r)=\Lambda(x)H_\eta(r-q),
\end{equation}
\begin{equation}\label{eq:reaction-energy}
E_\eta(v)=\int_\Omega\Lambda(x)P_\eta(v(x)-q)\,\dd x,
\qquad
DE_\eta(v)[\phi]=\int_\Omega R_\eta(x,v)\phi\,\dd x.
\end{equation}
The monotonicity $\partial_rR_\eta=\Lambda h_\eta\ge0$ is the structural property used in
comparison and stability.

\section{The regularized subjective-surface model and geometric limit}\label{sec:model}

\subsection{The regularized model}

For $p\in\mathbb R^d$ and $\eps>0$, define
\begin{equation}\label{eq:Aeps}
A_\eps(p)=\sqrt{\eps^2+|p|^2}.
\end{equation}
For a smooth function $v$, set
\begin{equation}\label{eq:Kepsnu}
K_{\eps,\nu}[v]
=\nu\Delta v+A_\eps(\nabla v)
\operatorname{div}\!\left(G(x)\frac{\nabla v}{A_\eps(\nabla v)}\right).
\end{equation}
The regularized initial-boundary value problem is
\begin{equation}\label{eq:regularized-flow}
\boxed{\begin{cases}
\partial_tu^{\eps,\nu}=K_{\eps,\nu}[u^{\eps,\nu}]
-\mu R_\eta(x,u^{\eps,\nu}),
&(t,x)\in(0,\infty)\times\Omega,\\[1mm]
u^{\eps,\nu}=0,&(t,x)\in(0,\infty)\times\partial\Omega,\\
u^{\eps,\nu}(0,x)=u^0(x),&x\in\overline\Omega.
\end{cases}}
\end{equation}

\begin{assumption}[Data for the vanishing-regularization problem]\label{ass:data}
Let $\alpha\in(0,1)$. Assume:
\begin{enumerate}[label=(A\arabic*),leftmargin=3em]
\item $\Omega\subset\mathbb R^d$, $d\ge2$, is bounded and has $C^{4+\alpha}$ boundary,
      and $\partial\Omega$ is strictly mean-convex;
\item $G$ extends to a $C^{3+\alpha}$ function on an open neighborhood of
      $\overline\Omega$, satisfies $0<g_*\le G\le g^*$, and is constant in a neighborhood
      of $\partial\Omega$;
\item $\Lambda$ extends to a nonnegative $C^2$ function on an open neighborhood of
      $\overline\Omega$;
\item $u^0\in C^{2+\alpha}(\overline\Omega)$, $0\le u^0\le1$, and $u^0$ vanishes in a
      neighborhood of $\partial\Omega$;
\item $\mu\ge0$, while $q$, $\eta$, $H$, $H_\eta$, and $h_\eta$ satisfy the conditions in
      Section~\ref{sec:coefficients}.
\end{enumerate}
\end{assumption}

Set
\[
d_\partial(x):=\operatorname{dist}(x,\partial\Omega)
\qquad (x\in\overline\Omega).
\]
By \textup{(A2)} and \textup{(A4)}, there exists \(\delta_b>0\) such that
\[
G \text{ is constant on }
\{x\in\overline\Omega:d_\partial(x)\le2\delta_b\},
\qquad
u^0(x)=0
\quad\text{whenever }d_\partial(x)\le2\delta_b.
\]
Let \(H_{\partial\Omega}\) denote the mean curvature of \(\partial\Omega\)
computed with respect to the outward unit normal, and define
\[
H_0:=\min_{y\in\partial\Omega}H_{\partial\Omega}(y)>0.
\]
The positivity follows from strict mean convexity and compactness of
\(\partial\Omega\).

Since $u^0$ vanishes in a neighborhood of $\partial\Omega$, one has
$u^0=\nabla u^0=D^2u^0=0$ on $\partial\Omega$. Consequently,
$K_{\eps,\nu}[u^0]=0$ on $\partial\Omega$ for every $\eps,\nu>0$.
Moreover,
$R_\eta(x,u^0)=\Lambda(x)H_\eta(-q)=0
\ \text{on }\partial\Omega$.
Thus,
\[
K_{\eps,\nu}[u^0]-\mu R_\eta(x,u^0)=0
\text{ on }\partial\Omega,
\]
so the same initial datum satisfies the compatibility condition for every
choice of $\eps,\nu>0$ and may be used for any sequence
$(\eps,\nu)\to(0,0)$.

For the viscosity limit argument, we rewrite the spatial part of the
regularized equation in nondivergence form. Let $\mathbb S^d$ denote the
space of symmetric $d\times d$ matrices. For
$x\in\overline{\Omega}$, $p\in\mathbb R^d$, and $X\in\mathbb S^d$, define
\begin{equation}\label{eq:regularized-operator}
\mathcal L^{\eps,\nu}(x,p,X)
=
-\nu\operatorname{tr}X
-G(x)\left(
\operatorname{tr}X-
\frac{Xp\cdot p}{\eps^2+|p|^2}
\right)
-\nabla G(x)\cdot p.
\end{equation}
For every smooth function $v$, the equation
\[
\partial_t v
+\mathcal L^{\eps,\nu}(x,\nabla v,D^2v)
+\mu R_\eta(x,v)=0
\]
is equivalent to \eqref{eq:regularized-flow}. Consequently, every
classical solution of the regularized problem is also a viscosity
solution of
\[
u_t
+\mathcal L^{\eps,\nu}(x,\nabla u,D^2u)
+\mu R_\eta(x,u)=0.
\]
For each $\eps,\nu>0$, the operator
$\mathcal L^{\eps,\nu}$ is continuous on
$\overline{\Omega}\times\mathbb R^d\times\mathbb S^d$. This formulation
is used to identify the equation satisfied by locally uniform limits as
$(\eps,\nu)\to(0,0)$, when the limiting function need not possess
classical second derivatives.

\subsection{Geometric limit and viscosity formulation}\label{sec:viscosity}

For $p\ne0$ and $X\in\mathbb S^d$, define
\begin{equation}\label{eq:K}
\mathcal K(x,p,X)=G(x)\left(\operatorname{tr}X-\frac{Xp\cdot p}{|p|^2}\right)
+\nabla G(x)\cdot p,
\qquad \mathcal L=-\mathcal K.
\end{equation}
The limiting initial-boundary value problem is
\begin{equation}\label{eq:limit-flow}
\boxed{\begin{cases}
\partial_tu-\mathcal K(x,\nabla u,D^2u)+\mu R_\eta(x,u)=0,
&(t,x)\in(0,\infty)\times\Omega,\\
u=0,&(t,x)\in(0,\infty)\times\partial\Omega,\\
u(0,x)=u^0(x),&x\in\overline\Omega.
\end{cases}}
\end{equation}
At zero gradient, define the geometric lower and upper semicontinuous extensions,
respectively, by
\begin{align}
\cLgeoLower(x,0,X)
&:=\liminf_{\substack{(y,p,Y)\to(x,0,X)\\ p\ne0}}
\mathcal L(y,p,Y)
=-G(x)\bigl(\operatorname{tr}X-\lambda_{\min}(X)\bigr),
\label{eq:Lsubstar}\\
\cLgeoUpper(x,0,X)
&:=\limsup_{\substack{(y,p,Y)\to(x,0,X)\\ p\ne0}}
\mathcal L(y,p,Y)
=-G(x)\bigl(\operatorname{tr}X-\lambda_{\max}(X)\bigr).
\label{eq:Lstar}
\end{align}
Since $G(x)>0$ and $\lambda_{\min}(X)\le\lambda_{\max}(X)$,
\[
\cLgeoLower(x,0,X)\le\cLgeoUpper(x,0,X).
\]
For $p\ne0$, both extensions equal $\mathcal L(x,p,X)$.

For $T>0$, set
\begin{equation}\label{eq:parabolic-boundary}
Q_T=(0,T)\times\Omega,
\qquad
\partial_PQ_T=(\{0\}\times\overline\Omega)\cup([0,T)\times\partial\Omega).
\end{equation}

\begin{definition}[Dirichlet viscosity solutions]\label{def:viscosity}
Fix $T>0$, $f\in C([0,T]\times\overline\Omega)$, and
$v^0\in C(\overline\Omega)$ with $v^0=0$ on $\partial\Omega$. Define
\[
\gamma(t,x)=
\begin{cases}
v^0(x), & t=0,\quad x\in\overline\Omega,\\
0, & 0<t<T,\quad x\in\partial\Omega.
\end{cases}
\]

A bounded upper-semicontinuous function
$v:[0,T]\times\overline\Omega\to\mathbb R$ is a viscosity subsolution of
\[
\partial_t v-\mathcal K(x,Dv,D^2v)=f
\qquad\text{in }Q_T
\]
if, whenever $(t_0,x_0)\in Q_T$ and $\varphi\in C^{1,2}$ is such that
$v-\varphi$ has a local maximum at $(t_0,x_0)$, one has
\[
\partial_t\varphi(t_0,x_0)
+\cLgeoLower\!\left(
x_0,D\varphi(t_0,x_0),D^2\varphi(t_0,x_0)
\right)
-f(t_0,x_0)
\le 0.
\]

If $(t_0,x_0)\in\partial_PQ_T$ and $\varphi\in C^{1,2}$ is such that
$v-\varphi$ has a local maximum at $(t_0,x_0)$ relative to
$[0,T)\times\overline\Omega$, then
\[
\min\!\left\{
v(t_0,x_0)-\gamma(t_0,x_0),\,
\partial_t\varphi(t_0,x_0)
+\cLgeoLower\!\left(
x_0,D\varphi(t_0,x_0),D^2\varphi(t_0,x_0)
\right)
-f(t_0,x_0)
\right\}
\le 0.
\]

A bounded lower-semicontinuous function
$v:[0,T]\times\overline\Omega\to\mathbb R$ is a viscosity supersolution if,
whenever $(t_0,x_0)\in Q_T$ and $\varphi\in C^{1,2}$ is such that
$v-\varphi$ has a local minimum at $(t_0,x_0)$, one has
\[
\partial_t\varphi(t_0,x_0)
+\cLgeoUpper\!\left(
x_0,D\varphi(t_0,x_0),D^2\varphi(t_0,x_0)
\right)
-f(t_0,x_0)
\ge 0.
\]

If $(t_0,x_0)\in\partial_PQ_T$ and $\varphi\in C^{1,2}$ is such that
$v-\varphi$ has a local minimum at $(t_0,x_0)$ relative to
$[0,T)\times\overline\Omega$, then
\[
\max\!\left\{
v(t_0,x_0)-\gamma(t_0,x_0),\,
\partial_t\varphi(t_0,x_0)
+\cLgeoUpper\!\left(
x_0,D\varphi(t_0,x_0),D^2\varphi(t_0,x_0)
\right)
-f(t_0,x_0)
\right\}
\ge 0.
\]

A continuous function that is both a viscosity subsolution and a viscosity
supersolution is a Dirichlet viscosity solution. It attains the boundary data in the classical sense if
\[
v=\gamma
\qquad\text{on }\partial_PQ_T.
\]
\end{definition}

A continuous function $u$ solves \eqref{eq:limit-flow} if it is a
Dirichlet viscosity solution of
\[
\partial_tu-\mathcal K(x,Du,D^2u)=f^u
\qquad\text{in }Q_T,
\]
where the continuous function determined by $u$ is defined by
\begin{equation}\label{eq:solution-inhomogeneity}
f^u(t,x)=-\mu R_\eta\bigl(x,u(t,x)\bigr).
\end{equation}

\begin{remark}[Extension beyond the terminal time]\label{rem:closed-time-interval}
Given $v\in C([0,T]\times\overline\Omega)$, set $T^+=T+1$ and define
\[
(\mathcal E_Tv)(t,x)=\begin{cases}
v(t,x),&0\le t\le T,\\
v(T,x),&T<t\le T^+.
\end{cases}
\]
Problems used on a closed time interval are first solved on $[0,T^+)\times\overline\Omega$ and
then restricted to $[0,T]\times\overline\Omega$. Comparison shows that the restriction is
independent of the chosen continuous continuation.
\end{remark}

For $\lambda>0$, define
\[
\|z\|_{\lambda,T}=\sup_{(t,x)\in[0,T]\times\overline\Omega}e^{-\lambda t}|z(t,x)|
\]
and let
\[
X_T=\{v\in C([0,T]\times\overline\Omega):v(0,\cdot)=u^0,\ 
v=0\text{ on }[0,T]\times\partial\Omega\}.
\]
The weighted norm is equivalent to the uniform norm on each fixed time interval, so $X_T$ is
complete.

\begin{definition}[Viscosity formulation under vanishing spatial derivatives]
\label{def:vanishing-derivative-formulation}
Let $O$ be an open set with $\overline O\subset Q_T$. A bounded
upper-semicontinuous function $v$ is a viscosity subsolution of
\[
\partial_t v-\cK(x,Dv,D^2v)=f
\qquad\text{in }O
\]
in the vanishing-derivative sense if, for every
$\widehat z=(\widehat t,\widehat x)\in O$ and every
$\varphi\in C^{1,2}(O)$ such that $v-\varphi$ has a local maximum at
$\widehat z$, one has
\[
\partial_t\varphi(\widehat z)
+\cL\!\left(
\widehat x,
D_x\varphi(\widehat z),
D_x^2\varphi(\widehat z)
\right)
-f(\widehat z)
\le 0
\]
whenever $D_x\varphi(\widehat z)\neq0$, while
\[
\partial_t\varphi(\widehat z)\le f(\widehat z)
\]
whenever
\[
D_x\varphi(\widehat z)=0
\qquad\text{and}\qquad
D_x^2\varphi(\widehat z)=0.
\]

A bounded lower-semicontinuous function $v$ is a viscosity supersolution
in the vanishing-derivative sense if, for every $\widehat z\in O$ and every
$\varphi\in C^{1,2}(O)$ such that $v-\varphi$ has a local minimum at
$\widehat z$, one has
\[
\partial_t\varphi(\widehat z)
+\cL\!\left(
\widehat x,
D_x\varphi(\widehat z),
D_x^2\varphi(\widehat z)
\right)
-f(\widehat z)
\ge 0
\]
whenever $D_x\varphi(\widehat z)\neq0$, while
\[
\partial_t\varphi(\widehat z)\ge f(\widehat z)
\]
whenever
\[
D_x\varphi(\widehat z)=0
\qquad\text{and}\qquad
D_x^2\varphi(\widehat z)=0.
\]

The preceding conditions define the vanishing-derivative formulation.
Proposition~\ref{prop:vanishing-derivative-equivalence} proves its equivalence with
the full geometric viscosity formulation and, in particular, yields the geometric
viscosity inequality when
\[
D_x\varphi(\widehat z)=0
\qquad\text{and}\qquad
D_x^2\varphi(\widehat z)\neq0.
\]
\end{definition}

\subsection{Main results}\label{sec:main-results}

\begin{theorem}[Geometric well-posedness and stability]
\label{thm:wellposed}
Under Assumption~\ref{ass:data}, problem \eqref{eq:limit-flow} admits a
unique global continuous viscosity solution. For every $T>0$, the solution
belongs to $C([0,T]\times\overline\Omega)$ and agrees with the prescribed
initial and homogeneous Dirichlet data on the parabolic boundary. Moreover,
\[
0\le u(t,x)\le1
\qquad
\text{for all }(t,x)\in[0,\infty)\times\overline\Omega.
\]

If $u$ and $v$ are two solutions corresponding to the same prescribed
coefficients and to initial data $u^0$ and $v^0$, respectively, both
satisfying Assumption~\ref{ass:data}, then
\begin{equation}\label{eq:nonexpansive}
\|u(t,\cdot)-v(t,\cdot)\|_{L^\infty(\Omega)}
\le
\|u^0-v^0\|_{L^\infty(\Omega)}
\qquad\text{for every }t\ge0.
\end{equation}
\end{theorem}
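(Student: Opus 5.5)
We will prove Theorem~\ref{thm:wellposed} in four steps: comparison for the linearized problem, Perron solvability with barriers, a fixed point in the reaction term, and passage to global time with the stated bounds.

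\textbf{Step 1: comparison for prescribed data.} For a given $f\in C([0,T]\times\overline\Omega)$ we first treat the problem $\partial_tv-\cK(x,Dv,D^2v)=f$ with Dirichlet data $\gamma$. We prove comparison: if $v$ is a subsolution and $w$ a supersolution in the sense of Definition~\ref{def:viscosity}, and $v\le w$ on $\partial_PQ_T$, then $v\le w$.

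The argument is the Chen--Giga--Goto/Evans--Spruck doubling of variables. We use the penalization $\frac{|x-y|^4}{4\delta}+\frac{|t-s|^2}{2\delta}$, which is needed to handle the singularity at $p=0$. At maximum points with $x\neq y$, the Crandall--Ishii lemma applies. The $x$-dependence of $G$ and $\nabla G$ is controlled by the structure
\[
G(x)\bigl(I-\hat p\otimes\hat p\bigr)=\sigma(x,\hat p)\sigma(x,\hat p)^T,\qquad \sigma=\sqrt{G}\,(I-\hat p\otimes\hat p),
\]
where $\hat p=p/|p|$; this $\sigma$ is Lipschitz in $x$ uniformly in $p$. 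The $\nabla G\cdot p$ term is Lipschitz in $x$ times $|p|$, and $|p|\sim|x-y|^3/\delta$. At points with $x=y$, the test gradient and Hessian vanish. There we use $\cLgeoLower(x,0,0)=\cLgeoUpper(x,0,0)=0$.

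\textbf{Step 2: barriers and Perron.} Using $G$ constant near $\partial\Omega$, strict mean convexity ($H_0>0$), and $u^0=0$ on $d_\partial\le2\delta_b$, we construct explicit boundary barriers of the form $\pm\psi(d_\partial(x))$ near the boundary. Near the boundary the equation reduces to constant-coefficient level-set mean-curvature flow. There the level sets of $d_\partial$ move inward with normal velocity of order at least $(d-1)$ times the mean curvature, which is bounded below by a multiple of $H_0$. This lets a function like $\min\{1,\ \|f\|_\infty t+ C d_\partial/\delta_b\}$ (suitably modified) act as an upper barrier. Together with time barriers $u^0(x)\pm C_\kappa t$ built from smooth approximations of the initial data, these give continuity at $\partial_PQ_T$.

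Perron's method (Ishii), with the semicontinuous envelopes $\cLgeoLower,\cLgeoUpper$, then yields a discontinuous solution squeezed between the barriers. Comparison makes it continuous and unique; call it $S(f)$. Comparison also gives the stability bound
\[
|S(f_1)-S(f_2)|(t)\le\int_0^t\|f_1-f_2\|_\infty\,ds,
\]
obtained by comparing with $S(f_2)\pm\int_0^t\|f_1-f_2\|$. This works because the operator does not depend on $u$ and adding a function of $t$ alone preserves sub/supersolutions.

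\textbf{Step 3: fixed point in the reaction term.} Define $\Phi(v)=S(-\mu R_\eta(\cdot,v))$ on $X_T$. Since $R_\eta$ is Lipschitz in $r$ with constant $L=\mu\|\Lambda\|_\infty\|h_\eta\|_\infty$, Step~1 gives
\[
\|\Phi v-\Phi w\|_{\lambda,T}\le \frac{L}{\lambda}\|v-w\|_{\lambda,T}.
\]
Choosing $\lambda>L$ makes $\Phi$ a contraction, so it has a unique fixed point, which solves \eqref{eq:limit-flow} on $[0,T]$. Uniqueness among all solutions also follows directly from comparison with a monotone reaction. The solutions for different $T$ are consistent by uniqueness (cf.\ Remark~\ref{rem:closed-time-interval}), so they patch to a global solution.

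\textbf{Step 4: the bounds.} For $[0,1]$-preservation, $0$ is a subsolution because $R_\eta(x,0)=\Lambda H_\eta(-q)=0$. The constant $1$ is a supersolution because $\mu R_\eta\ge0$, and $1\ge0$ on the lateral boundary. For the nonexpansive estimate, we use comparison for the full monotone problem. The function $v+\|u^0-v^0\|_\infty$ is a supersolution: its geometric operator is unchanged, and $R_\eta(x,v+c)\ge R_\eta(x,v)$ by monotonicity. Its boundary values are $\ge0$.

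To justify comparison with the $u$-dependent reaction, we either double variables directly, using $\partial_rR_\eta\ge0$ to discard the reaction difference, or apply the weighted-norm argument with $e^{-\lambda t}$.

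\textbf{Main obstacle.} The delicate part is Step~2: the boundary barriers and the resulting continuity up to $[0,T]\times\partial\Omega$. Degenerate mean-curvature flow can lose boundary data unless the boundary is mean-convex, and the barrier must be checked in the viscosity sense at zero-gradient points. I would first try
\[
\psi(d)=\min\Bigl\{1,\ \frac{d}{\delta_b}+\|f\|_\infty t\Bigr\}
\]
near the boundary, using $\Delta d_\partial\le -(d-1)H_0/2$ in a thin collar. If that fails, I would switch to a time-independent steep profile $\psi(d)=1-e^{-kd}$.
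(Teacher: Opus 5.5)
Your proposal follows essentially the paper's route. It uses comparison for the equation with prescribed continuous forcing; the paper cites Giga's Theorem~3.6.4 instead of redoing the $|x-y|^4$ doubling. It then uses Perron's method with initial barriers $u^0\pm C_0t$ and distance-function barriers built from strict mean convexity and $\nabla G=0$ near $\partial\Omega$, the stability estimate from comparing with $v_g\pm a(t)$, a contraction in the weighted norm $\|\cdot\|_{\lambda,T}$, and comparison with $0$, $1$, and $v+\|u^0-v^0\|_{L^\infty}$ for the bounds.

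The one correction is in Step~2: drop the $\|f\|_\infty t$ term from your first candidate barrier, because it keeps the barrier away from $0$ on $(0,T]\times\partial\Omega$ and so cannot give the homogeneous lateral data. The paper instead uses $\pm A d_\partial\pm\sigma(1+t)$ in the collar $\{0<d_\partial<\delta\}$. There $\cK(x,ADd_\partial,AD^2d_\partial)\le-A\kappa_0$ absorbs the forcing once $A\kappa_0\ge M$. The inner edge $\{d_\partial=\delta\}$ is handled through the a priori bounds $-Mt\le v\le1$ together with $A\delta\ge1+MT$.
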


\begin{proposition}[Direct joint lower and upper limits at zero gradient]\label{prop:direct-zero-gradient-limits}
For every $x\in\overline{\Omega}$ and symmetric matrix $X$, define
\begin{align}
\cJlower(x,0,X)
&:=\liminf_{\substack{(\eps,\nu,y,p,Y)\to(0,0,x,0,X)}}
\cL^{\eps,\nu}(y,p,Y)
=-G(x)\operatorname{tr}X+G(x)\min\{0,\lambda_{\min}(X)\},\label{eq:raw-lower}\\
\cJupper(x,0,X)
&:=\limsup_{\substack{(\eps,\nu,y,p,Y)\to(0,0,x,0,X)}}
\cL^{\eps,\nu}(y,p,Y)
=-G(x)\operatorname{tr}X+G(x)\max\{0,\lambda_{\max}(X)\}.\label{eq:raw-upper}
\end{align}
Then
\begin{equation}\label{eq:limiting-value-order}
\cJlower(x,0,X)\le\cLgeoLower(x,0,X)
\le\cLgeoUpper(x,0,X)\le\cJupper(x,0,X).
\end{equation}
\end{proposition}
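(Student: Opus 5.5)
We compute the two joint limits directly from \eqref{eq:regularized-operator}. Along any sequence $(\eps,\nu,y,p,Y)\to(0,0,x,0,X)$, the terms $-\nu\tr Y$ and $-\nabla G(y)\cdot p$ tend to $0$, and $G(y)\tr Y\to G(x)\tr X$. All terms are continuous except the quotient. Hence
\[
\cL^{\eps,\nu}(y,p,Y)=-G(x)\tr X+G(x)\,\frac{Yp\cdot p}{\eps^2+|p|^2}+o(1).
\]
Set $s:=|p|^2/(\eps^2+|p|^2)\in[0,1]$, and $e:=p/|p|$ when $p\ne0$ (if $p=0$ the quotient is $0$). Then the quotient equals $s\,(Ye\cdot e)$. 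Since $Ye\cdot e-Xe\cdot e\to0$ uniformly in unit $e$, the quotient equals $s\,(Xe\cdot e)+o(1)$.

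The task thus reduces to describing the set of cluster values of $s\,(Xe\cdot e)$, where $s\in[0,1]$ and $e\in\mathbb S^{d-1}$ are arbitrary. For the bounds, $Xe\cdot e\in[\lambda_{\min}(X),\lambda_{\max}(X)]$ and $s\in[0,1]$ give
\[
\min\{0,\lambda_{\min}(X)\}\le s\,(Xe\cdot e)\le\max\{0,\lambda_{\max}(X)\}.
\]
Taking $\liminf$ and $\limsup$ then yields the inequalities $\ge$ in \eqref{eq:raw-lower} and $\le$ in \eqref{eq:raw-upper}.

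For attainment, we exhibit explicit sequences, which is the only step requiring care: we must realize any prescribed $s\in[0,1]$ and direction $e$ while $\eps,p\to0$ jointly. Fix $y=x$, $Y=X$, $\nu=1/n$, and let $e$ be a unit eigenvector for $\lambda_{\min}(X)$.
\begin{itemize}[leftmargin=2em]
\item If $\lambda_{\min}(X)<0$, take $p=e/n$ and $\eps=1/n^2$. Then $s\to1$ and the quotient tends to $\lambda_{\min}(X)$.
\item If $\lambda_{\min}(X)\ge0$, take $p=0$ (or $p=e/n^2$, $\eps=1/n$). Then $s\to0$ and the quotient tends to $0$.
\end{itemize}
This gives equality in \eqref{eq:raw-lower}. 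The same choices with $\lambda_{\max}$ and $\max$ give \eqref{eq:raw-upper}.

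It remains to prove the ordering \eqref{eq:limiting-value-order}. The middle inequality was already noted after \eqref{eq:Lstar}. For the outer inequalities we compare with \eqref{eq:Lsubstar}--\eqref{eq:Lstar}. The first reads
\[
-G\tr X+G\min\{0,\lambda_{\min}\}\le -G\tr X+G\lambda_{\min},
\]
which holds since $G>0$ and $\min\{0,\lambda_{\min}\}\le\lambda_{\min}$. The last is the analogous statement $\lambda_{\max}\le\max\{0,\lambda_{\max}\}$. One could alternatively derive these inequalities abstractly: taking $s\to1$ along sequences with $p\ne0$ recovers the geometric quotient, so every geometric cluster value is also a joint cluster value. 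The explicit formulas make this unnecessary.

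The main (mild) obstacle is the uniformity in the quotient reduction, i.e. replacing $Y$ by $X$ and $G(y)$ by $G(x)$ uniformly in direction. This follows from continuity of $G$ and the bound $|(Y-X)e\cdot e|\le\|Y-X\|$.
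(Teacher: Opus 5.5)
Your proof is correct and follows essentially the same route as the paper. Both reduce the joint limit to cluster values of $r\,(Xe\cdot e)$ with $r=|p|^2/(\eps^2+|p|^2)\in[0,1]$, bound these between $\min\{0,\lambda_{\min}(X)\}$ and $\max\{0,\lambda_{\max}(X)\}$, and attain the endpoints with explicit sequences such as $\eps_k=k^{-2}$, $p_k=k^{-1}e$. The only difference is that the paper identifies the full set $\mathcal A_{x,X}$ of joint cluster values, realizing every $r_0\in[0,1]$ and every direction, before taking max and min; you realize only the extremal values, which is all the statement requires.
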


\begin{proposition}[Equivalence of viscosity formulations]
\label{prop:vanishing-derivative-equivalence}
Let
\[
f\in C([0,T]\times\overline\Omega).
\]
For the equation
\[
\partial_t v-\cK(x,Dv,D^2v)=f
\qquad\text{in }Q_T,
\]
the geometric viscosity formulation and the formulation given in
Definition~\ref{def:vanishing-derivative-formulation} define the same class of
viscosity subsolutions. The corresponding equivalence also holds for
viscosity supersolutions.
\end{proposition}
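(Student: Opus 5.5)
The plan is to prove the two inclusions separately. The direction from the geometric formulation to the vanishing-derivative formulation is immediate; all the work is in the converse, which I would handle by a doubling-of-variables argument in the spirit of Barles--Georgelin / Ishii--Souganidis. I describe subsolutions only. The supersolution statement follows by the symmetric argument, or by applying the subsolution case to $-v$, using $\cLgeoUpper(x,0,-X)=-\cLgeoLower(x,0,X)$ and $\cL(x,-p,-X)=-\cL(x,p,X)$.

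\emph{Easy direction.} For $p\neq0$ both extensions equal $\cL$, so the two definitions coincide. If $D_x\varphi(\widehat z)=0$ and $D_x^2\varphi(\widehat z)=0$, then \eqref{eq:Lsubstar} gives $\cLgeoLower(\widehat x,0,0)=0$, and the geometric inequality reduces exactly to $\partial_t\varphi(\widehat z)\le f(\widehat z)$. The boundary conditions on $\partial_PQ_T$ are not involved, since Definition~\ref{def:vanishing-derivative-formulation} concerns interior points only.

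\emph{Converse.} Let $v$ be a vanishing-derivative subsolution. Let $v-\varphi$ have a local maximum at $\widehat z$ with $D_x\varphi(\widehat z)=0$ and $X:=D_x^2\varphi(\widehat z)$. We must show
\[
\partial_t\varphi(\widehat z)\le f(\widehat z)+G(\widehat x)\bigl(\tr X-\lambda_{\min}(X)\bigr).
\]
The steps are as follows.
\begin{enumerate}[label=(\alph*),leftmargin=2.2em]
\item Make the maximum strict by replacing $\varphi$ with $\varphi+|t-\widehat t|^2+|x-\widehat x|^4$. This does not change the first and second derivatives at $\widehat z$.
\item For $\kappa>0$, maximize
\[
\Phi_\kappa(t,x,y)=v(t,x)-\varphi(t,y)-\frac{|x-y|^4}{4\kappa}
\]
over a small compact neighbourhood. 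Standard arguments give maximizers $(t_\kappa,x_\kappa,y_\kappa)\to\widehat z$ with $|x_\kappa-y_\kappa|^4/\kappa\to0$.
\item If $x_\kappa=y_\kappa$, the function $(t,x)\mapsto\varphi(t,y_\kappa)+|x-y_\kappa|^4/(4\kappa)$ touches $v$ from above at $(t_\kappa,x_\kappa)$. Its spatial gradient and Hessian both vanish there, so the vanishing-derivative condition yields $\partial_t\varphi(t_\kappa,y_\kappa)\le f(t_\kappa,x_\kappa)$. Moreover, $y_\kappa$ minimizes $y\mapsto\varphi(t_\kappa,y)+|x_\kappa-y|^4/(4\kappa)$, and the quartic has zero Hessian at $y_\kappa=x_\kappa$. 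Hence $D^2\varphi(t_\kappa,y_\kappa)\ge0$. If this case occurs along a subsequence, then in the limit $X\ge0$, so $\tr X-\lambda_{\min}(X)\ge0$, and the desired inequality follows.
\item If $x_\kappa\neq y_\kappa$, set $p_\kappa=|x_\kappa-y_\kappa|^2(x_\kappa-y_\kappa)/\kappa\neq0$. The minimality in $y$ gives $p_\kappa=D_x\varphi(t_\kappa,y_\kappa)$, so $p_\kappa\to0$. The parabolic Crandall--Ishii lemma produces $(b,p_\kappa,X_\kappa)\in\overline{\mathcal P}^{2,+}v(t_\kappa,x_\kappa)$ with $b=\partial_t\varphi(t_\kappa,y_\kappa)$. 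Because $p_\kappa\ne0$ and $\cL$ is continuous near $p_\kappa$, the vanishing-derivative condition passes to closed semijets. This gives $b+\cL(x_\kappa,p_\kappa,X_\kappa)-f(t_\kappa,x_\kappa)\le0$.
\end{enumerate}

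\emph{Main obstacle.} The delicate point is step (d): comparing $X_\kappa$ with $D^2\varphi(t_\kappa,y_\kappa)\to X$. The naive Crandall--Ishii matrix bound carries penalization Hessians of size $|x_\kappa-y_\kappa|^2/\kappa$, and these need not vanish. To handle this, I would exploit that $\varphi$ is smooth in $y$. Comparing the second-order expansions in $x$ and $y$ along the common direction $h$, using the maximality of $\Phi_\kappa$ at $(x_\kappa+h,y_\kappa+h)$, should give
\[
X_\kappa\le D^2\varphi(t_\kappa,y_\kappa)+o(1).
\]
This is the usual translation trick, which avoids the penalization Hessian altogether. Degenerate ellipticity of $\cL$ then gives
\[
b+\cL\bigl(x_\kappa,p_\kappa,D^2\varphi(t_\kappa,y_\kappa)+o(1)\bigr)-f\le0 .
\]
Letting $\kappa\to0$ and using the definition \eqref{eq:Lsubstar} of $\cLgeoLower$ as a $\liminf$ over $p\neq0$, we obtain $\partial_t\varphi(\widehat z)+\cLgeoLower(\widehat x,0,X)-f(\widehat z)\le0$. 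The case split in (c)--(d) along subsequences then covers every situation. If the translation trick fails to give the matrix inequality cleanly, I would fall back on taking a penalization $|x-y|^{2m}/\kappa$ with large $m$, or on a sup-convolution in $x$ only, before applying the lemma.
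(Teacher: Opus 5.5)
Your proposal is correct and follows essentially the paper's argument: strictify the maximum, double only the spatial variable with the quartic penalty $|x-y|^4/(4\kappa)$, and split according to whether $x_\kappa=y_\kappa$ (equivalently $D_x\varphi(t_\kappa,y_\kappa)=0$), using $D^2\varphi(t_\kappa,y_\kappa)\ge0$ and degenerate ellipticity in the degenerate case. The only difference is step (d), where the paper bypasses the Crandall--Ishii lemma and semijet closures by using the translated function $(t,x)\mapsto\varphi(t,x-(x_\kappa-y_\kappa))+|x_\kappa-y_\kappa|^4/(4\kappa)$ directly as a test function touching $v$ from above at $(t_\kappa,x_\kappa)$; the nonzero-gradient inequality then holds with exactly $(\partial_t\varphi,D_x\varphi,D_x^2\varphi)(t_\kappa,y_\kappa)$, so your ``main obstacle'' and its $o(1)$ error never arise.
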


\begin{theorem}[Path-independent vanishing-regularization limit]\label{thm:limit}
Let $(\eps_n,\nu_n)$ be any sequence of positive parameters with
$(\eps_n,\nu_n)\to(0,0)$. For each $n$, let $u^{\eps_n,\nu_n}$ be the unique global
Dirichlet viscosity solution of \eqref{eq:regularized-flow} furnished by
Proposition~\ref{prop:regularized-full}, and let $u$ be the unique global continuous
viscosity solution of \eqref{eq:limit-flow} furnished by
Theorem~\ref{thm:wellposed}. Then, for every $T>0$,
\begin{equation}\label{eq:localuniform}
\lim_{n\to\infty}
\|u^{\eps_n,\nu_n}-u\|_{C([0,T]\times\overline\Omega)}=0.
\end{equation}
The conclusion is independent of the relative rates at which $\eps_n$ and $\nu_n$ tend to zero.
\end{theorem}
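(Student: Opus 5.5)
Our plan is to follow the half-relaxed limits method of Barles--Perthame, adapted so that the zero-gradient mismatch recorded in Proposition~\ref{prop:direct-zero-gradient-limits} is handled through the vanishing-derivative formulation of Proposition~\ref{prop:vanishing-derivative-equivalence}.

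\textbf{Step 1: uniform bounds and half-relaxed limits.} By the fixed-parameter results (Proposition~\ref{prop:regularized-full}), each $u^n:=u^{\eps_n,\nu_n}$ satisfies $0\le u^n\le 1$, so we may define
\[
\overline u=\limsup{}^*u^n,\qquad \underline u=\liminf{}_*u^n
\]
on $[0,T]\times\overline\Omega$. These are bounded, upper and lower semicontinuous respectively, and satisfy $\underline u\le\overline u$.

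\textbf{Step 2: barriers at the parabolic boundary.} We construct barriers that are uniform in $(\eps,\nu)$ and force
\[
\overline u\le\gamma\le\underline u\quad\text{on }\partial_PQ_T .
\]
Near $\partial\Omega$, $G$ is constant, so the $\nabla G$ term vanishes. We take an upper barrier of the form $\psi(d_\partial(x))+Ct$, or a steady profile $\psi(d_\partial)$ with $\psi$ increasing and concave. The operator acting on $\psi(d_\partial)$ equals
\[
-(G+\nu)\psi''\,\frac{\eps^2}{\eps^2+\psi'^2}\;-\;(G+\nu)\psi''\,\frac{\psi'^2}{\eps^2+\psi'^2}\ \text{-type terms}\;+\;G\,\psi'\,(-\Delta d_\partial).
\]
Here $-\Delta d_\partial\ge H_0/2>0$ near the boundary, by strict mean convexity. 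Concavity makes the $\psi''$ contributions favorable for every $\eps$, and $\nu$ only adds a favorable $-\nu\psi''\ge0$ term. Since $u^0=0$ on the strip $d_\partial\le 2\delta_b$, the barrier only needs to dominate $u^0\le1$ inside. Comparison for the regularized problem then gives
\[
|u^n(t,x)|\le\psi(d_\partial(x))\to0
\]
uniformly as $x\to\partial\Omega$. Near $t=0$, we use the standard barriers $u^0(y)\pm\bigl(\omega_\delta+C_\delta|x-y|^2+C'_\delta t\bigr)$. These are uniform because $u^0\in C^2$ and $\mathcal L^{\eps,\nu}(x,p,X)$ is bounded by $C(|X|+|p|)$ uniformly in $\eps,\nu\le1$.

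\textbf{Step 3: the limiting inequalities.} This is the main obstacle. The standard stability theorem only yields
\[
\partial_t\varphi+\cJlower-\mu R_\eta(x,\overline u)\le0
\]
at zero gradient, and Proposition~\ref{prop:direct-zero-gradient-limits} shows $\cJlower<\cLgeoLower$ in general. So we instead verify the vanishing-derivative formulation for $\overline u$ with $f=-\mu R_\eta(x,\overline u)$.

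Take $\varphi$ touching $\overline u$ from above at a strict maximum $\widehat z$. Perturb to obtain maxima $z_n\to\widehat z$ of $u^n-\varphi$ along a subsequence with $u^n(z_n)\to\overline u(\widehat z)$. The reaction term passes to the limit by continuity and monotonicity, which is why we treat it separately.
\begin{itemize}
\item If $D\varphi(\widehat z)\ne0$, then $\mathcal L^{\eps,\nu}\to\cL$ locally uniformly near $(\widehat x,D\varphi,D^2\varphi)$, which gives the inequality.
\item If $D\varphi(\widehat z)=0$ and $D^2\varphi(\widehat z)=0$, then $|\mathcal L^{\eps_n,\nu_n}(x_n,D\varphi(z_n),D^2\varphi(z_n))|\le C\bigl(|D^2\varphi(z_n)|+|D\varphi(z_n)|\bigr)\to0$, because the projection factor is bounded by $1$ for every $\eps$. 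Hence $\partial_t\varphi(\widehat z)\le f(\widehat z)$.
\end{itemize}
Proposition~\ref{prop:vanishing-derivative-equivalence} then upgrades this to a geometric viscosity subsolution; the supersolution case for $\underline u$ is symmetric. One point needs care: $f$ depends on $\overline u$ itself, and it is only upper semicontinuous. We plan to treat $-\mu R_\eta(x,\overline u)$ by monotonicity, using the comparison for the reaction problem from Section~\ref{sec:comparison-perron}. Alternatively, we can freeze $f$ via the contraction in $X_T$ with the $\|\cdot\|_{\lambda,T}$ norm. If the equivalence proposition requires continuous $f$, the frozen approach is the one we would adopt.

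\textbf{Step 4: conclusion.} Comparison (Theorem~\ref{thm:wellposed} and the Dirichlet comparison of Section~\ref{sec:comparison-perron}) applied with the boundary ordering of Step 2 gives $\overline u\le\underline u$. Hence $\overline u=\underline u=u$, the unique solution. Equality of the half-relaxed limits yields uniform convergence on the compact set $[0,T]\times\overline\Omega$. Since no relation between $\eps_n$ and $\nu_n$ was used, and every subsequence has the same limit, the whole sequence converges, independently of the relative rates.
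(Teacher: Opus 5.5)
Your proposal is correct, but its main line takes a different route from the paper. The paper's proof is exactly the ``frozen'' fallback you mention in one sentence.

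\textbf{Your route.} You take half-relaxed limits of the nonlinear solutions $u^{\eps_n,\nu_n}$ themselves. You keep the reaction as an $r$-dependent zeroth-order term and identify the limit through Theorem~\ref{thm:reaction-comparison}.

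\textbf{The paper's route.} The paper freezes the reaction at the \emph{limit} fixed point $u$. It proves $\mathfrak T^{\eps_n,\nu_n}(u)\to\mathfrak T(u)$ uniformly by a half-relaxed-limit argument with the continuous datum $f^{u,T}$ (Lemma~\ref{lem:vanishing-regularization-selection}). It then transfers this to the fixed points: the contraction constant $\mu L_\eta/\lambda$ in Lemma~\ref{lem:regularized-map-contraction} does not depend on $(\eps,\nu)$, so $\|u^{\eps_n,\nu_n}-u\|_{\lambda,T}\le\delta_n(T)/(1-\mu L_\eta/\lambda)$.

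\textbf{What each buys.} In the paper's route, every limit-passage tool (vanishing-derivative stability, Proposition~\ref{prop:vanishing-derivative-equivalence}, Theorem~\ref{thm:comparison}) is only needed for continuous $(t,x)$-dependent data, and the error transfer is quantitative. The cost is barriers for frozen solutions that need not stay in $[0,1]$, which is why the paper needs the $-Mt$ and $-A_{M,T}d_\partial$ lower bounds. Your route uses $0\le u^{\eps_n,\nu_n}\le1$, so the lower boundary barrier is trivial and no contraction is needed.

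\textbf{The point you left open.} Your route needs the equivalence of Proposition~\ref{prop:vanishing-derivative-equivalence} for the $r$-dependent operator, which the paper does not state. The extension is routine, but not for the reason you suggest. The function $-\mu R_\eta(x,\overline u)$ is \emph{lower}, not upper, semicontinuous, because $R_\eta$ is nondecreasing in $r$ and $\overline u$ is upper semicontinuous. If you treat it as a fixed $f(t,x)$, that is exactly the wrong direction for the limit step in Lemma~\ref{lem:continuous-inhomogeneity-vanishing-derivative-equivalence}. Keep the $r$-dependence instead. At the penalized maxima $z_\delta=(t_\delta,x_\delta)$, maximality gives $\overline u(z_\delta)\ge\overline u(z_0)-\varphi(z_0)+\varphi(t_\delta,y_\delta)$. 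Together with upper semicontinuity this yields $\overline u(z_\delta)\to\overline u(z_0)$. Continuity of $R_\eta$ then passes the reaction term to the limit in both subcases, and the supersolution case is symmetric.

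\textbf{Two minor points.}
\begin{enumerate}
\item Theorem~\ref{thm:reaction-comparison} gives $\overline u\le\underline u$ only in the open set $Q_T$, and the barriers give it on $\partial_PQ_T$, but neither covers $\{T\}\times\overline\Omega$. Since the solutions are global, run the argument on $[0,T']$ with $T'>T$ to get equality up to $t=T$, as the paper does with $T^+$.
\item The displayed operator in Step~2 is garbled. Using $|Dd_\partial|=1$, $D^2d_\partial\,Dd_\partial=0$ and $DG=0$, one gets $\cL^{\eps,\nu}\bigl(x,D(\psi\circ d_\partial),D^2(\psi\circ d_\partial)\bigr)=-\nu\psi''-G\psi''\eps^2/(\eps^2+\psi'^2)+(G+\nu)\psi' H_{d_\partial}$. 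Your sign conclusion still holds; the paper simply takes $\psi$ linear.
\end{enumerate}
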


\section{Structural properties of the operators}\label{sec:structural}
\begin{lemma}[Reaction bounds and admissibility]
\label{lem:reaction-inhomogeneity}
Set
\[
M_\eta=\mu\norm{\Lambda}_{L^\infty(\Omega)},\qquad
L_\eta=\norm{\Lambda}_{L^\infty(\Omega)}\norm{h_\eta}_{L^\infty(\mathbb R)}.
\]
Then
\[
0\le R_\eta(x,r)\le\norm{\Lambda}_{L^\infty(\Omega)},
\qquad
\partial_rR_\eta(x,r)=\Lambda(x)h_\eta(r-q)\ge0,
\]
and
\[
|R_\eta(x,r)-R_\eta(x,s)|\le L_\eta|r-s|.
\]
For every $v\in X_T$, define
\begin{equation}\label{eq:reaction-inhomogeneity-map}
f^{v,T}(t,x)=-\mu R_\eta\bigl(x,(\mathcal E_Tv)(t,x)\bigr).
\end{equation}
Then
\[
f^{v,T}\in C([0,T^+]\times\overline\Omega),
\qquad
-M_\eta\le f^{v,T}\le0,
\]
and
\[
f^{v,T}=0
\qquad\text{on }[0,T^+]\times\partial\Omega.
\]
Moreover,
\begin{equation}\label{eq:inhomogeneity-lipschitz}
\norm{f^{v,T}(t)-f^{w,T}(t)}_{L^\infty(\Omega)}
\le\mu L_\eta\norm{v(t)-w(t)}_{L^\infty(\Omega)}
\qquad(0\le t\le T).
\end{equation}
\end{lemma}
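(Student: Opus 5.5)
The argument is a direct verification from the definitions in Section~\ref{sec:coefficients} and Remark~\ref{rem:closed-time-interval}. I would proceed in the order the claims are stated.

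\emph{Pointwise bounds.} Since $0\le H\le 1$, we have $0\le H_\eta\le1$. Together with $\Lambda\ge0$ from (A3), this gives $0\le R_\eta(x,r)=\Lambda(x)H_\eta(r-q)\le\norm{\Lambda}_{L^\infty(\Omega)}$. Differentiating in $r$ gives $\partial_rR_\eta=\Lambda(x)h_\eta(r-q)$, which is nonnegative because $H'\ge0$. The Lipschitz estimate follows from the mean value theorem applied to $r\mapsto H_\eta(r-q)$: one gets $|R_\eta(x,r)-R_\eta(x,s)|\le\Lambda(x)\norm{h_\eta}_{L^\infty}|r-s|\le L_\eta|r-s|$. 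Here $\norm{h_\eta}_{L^\infty}=\eta^{-1}\norm{H'}_{L^\infty}$ is finite because $H'$ is smooth and compactly supported in $[-1,1]$.

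\emph{Properties of $f^{v,T}$.} For $v\in X_T$, the extension $\mathcal E_Tv$ is continuous on $[0,T^+]\times\overline\Omega$. This is because it is piecewise defined by continuous functions that agree at $t=T$. Since $R_\eta$ is jointly continuous ($\Lambda$ is continuous and $H_\eta$ is smooth), the composition $f^{v,T}$ is continuous. The bound $-M_\eta\le f^{v,T}\le0$ is immediate from the pointwise bounds and $\mu\ge0$. For the boundary claim, take $x\in\partial\Omega$. For $t\le T$ we have $v(t,x)=0$ by definition of $X_T$, and for $t>T$ we have $(\mathcal E_Tv)(t,x)=v(T,x)=0$. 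Hence $f^{v,T}(t,x)=-\mu\Lambda(x)H_\eta(-q)$. Because $0<\eta<q$, we get $-q/\eta<-1$, so $H_\eta(-q)=H(-q/\eta)=0$. This is the only point where the threshold condition on $\eta$ enters.

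\emph{Lipschitz dependence.} For $0\le t\le T$ we have $\mathcal E_Tv(t)=v(t)$, so the pointwise Lipschitz bound multiplied by $\mu$ gives \eqref{eq:inhomogeneity-lipschitz} after taking the supremum over $x$. No step is a genuine obstacle. The only points requiring care are the boundary vanishing, which uses $\eta<q$, and the continuity of the extension at $t=T$.
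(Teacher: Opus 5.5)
Your proof is correct and follows the paper's argument. The bounds and monotonicity come from $0\le H_\eta\le1$, $h_\eta\ge0$ and $\Lambda\ge0$, the Lipschitz bound from the mean value theorem, and the vanishing on the boundary from $(\mathcal E_Tv)(t,x)=0$ together with $H_\eta(-q)=0$ because $q>\eta$. You also spell out the continuity of $\mathcal E_Tv$ across $t=T$ and the finiteness of $\norm{h_\eta}_{L^\infty(\mathbb R)}$, which the paper leaves implicit.
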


\begin{proof}
The bounds and monotonicity follow from $0\le H_\eta\le1$, $H_\eta'=h_\eta\ge0$, and
$\Lambda\ge0$.  The mean-value theorem gives the Lipschitz estimate.  If
$x\in\partial\Omega$ and $v\in X_T$, then $(\mathcal E_Tv)(t,x)=0$ and
$H_\eta(-q)=0$ because $q>\eta$.  Consequently,
$f^{v,T}(t,x)=0$ on $[0,T^+]\times\partial\Omega$.
\end{proof}

\begin{lemma}[Structural properties of the weighted curvature operator]
\label{lem:structural-comparison}
Fix \(T>0\) and let
\[
f\in C([0,T]\times\overline{\Omega}).
\]
For \(p\neq0\), define
\[
\mathfrak F_f(t,x,r,p,X)
=
\cL(x,p,X)-f(t,x).
\]
Then \(\mathfrak F_f\) is continuous for \(p\neq0\), degenerate elliptic in
\(X\), and independent of \(r\). Moreover, \(\cL\) is geometric in
\((p,X)\).

At \(p=0\), the lower and upper semicontinuous extensions are given by
\begin{equation}\label{eq:zero-gradient-limits}
\mathfrak F_{f,*}(t,x,r,0,X)
=
\cLgeoLower(x,0,X)-f(t,x),
\qquad
\mathfrak F_f^{\,*}(t,x,r,0,X)
=
\cLgeoUpper(x,0,X)-f(t,x).
\end{equation}
In particular,
\begin{equation}\label{eq:origin-value}
\mathfrak F_{f,*}(t,x,r,0,0)
=
\mathfrak F_f^{\,*}(t,x,r,0,0)
=
-f(t,x).
\end{equation}

The principal part satisfies
\[
\sup_{\substack{t\in[0,T]\\x\in\overline{\Omega}}}
\left|
\mathfrak F_f(t,x,r,p,X)+f(t,x)
\right|
\longrightarrow0
\qquad\text{as }(p,X)\to(0,0),
\]
uniformly with respect to \(r\in\mathbb R\).

Finally, there exists a modulus of continuity \(\omega_{f,T}\) such that,
whenever \(p\neq0\), \(\zeta>0\), and
\[
-3\zeta I
\le
\begin{pmatrix}
X&0\\
0&-Y
\end{pmatrix}
\le
3\zeta
\begin{pmatrix}
I&-I\\
-I&I
\end{pmatrix},
\]
one has, for all \(s,t\in[0,T]\),
\begin{equation}\label{eq:ishii-structural-estimate}
\mathfrak F_f(t,x,r,p,X)
-
\mathfrak F_f(s,y,r,p,Y)
\ge
-\omega_{f,T}\!\left(
|t-s|
+
|x-y|(|p|+1)
+
\zeta|x-y|^2
\right).
\end{equation}
\end{lemma}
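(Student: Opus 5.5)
The plan is to verify the listed properties in turn, directly from the explicit formula
\[
\cL(x,p,X)=-G(x)\Bigl(\tr X-\tfrac{Xp\cdot p}{|p|^2}\Bigr)-\nabla G(x)\cdot p .
\]

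\emph{Continuity, ellipticity, independence of $r$, geometric structure.} Continuity for $p\neq0$ holds because $G\in C^{3+\alpha}$, $f$ is continuous, and $p\mapsto p\otimes p/|p|^2$ is smooth away from the origin. For degenerate ellipticity, take $X\le Y$. Then
\[
\cL(x,p,X)-\cL(x,p,Y)=G(x)\,\tr\bigl((Y-X)(I-\hat p\otimes\hat p)\bigr)\ge0,
\]
since the product of two positive semidefinite matrices has nonnegative trace. Independence of $r$ is evident from the formula.

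The geometric property is the identity $\cL(x,\lambda p,\lambda X+\sigma p\otimes p)=\lambda\cL(x,p,X)$ for $\lambda>0$ and $\sigma\in\R$. The homogeneity in $(p,X)$ is immediate. The added term $\sigma p\otimes p$ drops out because $\tr(p\otimes p)-(p\otimes p)p\cdot p/|p|^2=0$. Note that the drift $\nabla G\cdot p$ is only positively $1$-homogeneous. I will state the property in this form, and I will check whether the paper intends the drift to be included in the geometric part.

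\emph{Semicontinuous extensions at $p=0$.} The second-order part depends only on $\hat p=p/|p|$:
\[
\tr X-X\hat p\cdot\hat p .
\]
As $\hat p$ ranges over the unit sphere, this takes every value in $[\tr X-\lambda_{\max}(X),\,\tr X-\lambda_{\min}(X)]$. The first-order term $\nabla G\cdot p$ tends to $0$, and $G(y)\to G(x)$, $Y\to X$. Taking the liminf and limsup therefore gives \eqref{eq:Lsubstar} and \eqref{eq:Lstar}, and subtracting the continuous $f$ gives \eqref{eq:zero-gradient-limits}. Setting $X=0$ gives \eqref{eq:origin-value}.

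For the uniform smallness I use
\[
\bigl|\mathfrak F_f+f\bigr|\le g^*\,|\tr X-X\hat p\cdot\hat p|+\|\nabla G\|_\infty|p|\le g^*(d+1)|X|+\|\nabla G\|_\infty|p| .
\]
This bound is uniform in $t$, $x$ and $r$.

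\emph{The Ishii structural estimate \eqref{eq:ishii-structural-estimate}.} This is the main step. I split the difference into three parts:
\[
\mathfrak F_f(t,x,r,p,X)-\mathfrak F_f(s,y,r,p,Y)=\bigl[f(s,y)-f(t,x)\bigr]+\bigl[\nabla G(y)-\nabla G(x)\bigr]\cdot p+\Bigl[-G(x)\,\tr(A X)+G(y)\,\tr(A Y)\Bigr],
\]
where $A=I-\hat p\otimes\hat p$.

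\begin{itemize}
\item The first bracket is bounded below by $-\omega_f(|t-s|+|x-y|)$, using the uniform continuity of $f$ on the compact set.
\item The second is at least $-\|D^2G\|_\infty|x-y||p|$.
\end{itemize}

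For the principal bracket I use the standard trick with the Lipschitz square root. Write $\sigma(x)=\sqrt{G(x)}\,A^{1/2}$; since $A$ is a projection, $A^{1/2}=A$. Test the matrix inequality against the vectors $(\sigma(x)e_i,\sigma(y)e_i)$ and sum over $i$. The upper bound then gives
\[
\tr\bigl(\sigma(x)^T X\sigma(x)\bigr)-\tr\bigl(\sigma(y)^TY\sigma(y)\bigr)\le 3\zeta\,\|\sigma(x)-\sigma(y)\|^2\le 3\zeta\,d\,\mathrm{Lip}(\sqrt G)^2|x-y|^2 .
\]
Here $\sqrt G$ is Lipschitz because $G\ge g_*>0$ and $G\in C^1$. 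This yields
\[
-G(x)\tr(AX)+G(y)\tr(AY)\ge -C\zeta|x-y|^2 .
\]
The lower bound $-3\zeta I$ is not needed for this step.

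Combining the three bounds, I set
\[
\omega_{f,T}(s)=\omega_f(s)+C's ,
\]
with $C'$ depending on $\|D^2G\|_\infty$, $d$ and $g_*$. This gives \eqref{eq:ishii-structural-estimate}. The only delicate point is the choice of $\sigma$, so that the same matrix $A$ appears at both $x$ and $y$. This works because $p$ is common to both sides.
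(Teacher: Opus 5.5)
Your proposal is correct and follows the paper's proof essentially step for step. It uses the same direct checks of ellipticity, geometric homogeneity and the bound $(d+1)g^*\|X\|+\|\nabla G\|_\infty|p|$. It obtains the Ishii estimate the same way, by testing the upper block inequality with $\sqrt{G}$-weighted vectors in the range of $I-\hat p\otimes\hat p$; your choice $\sigma=\sqrt{G}\,A$ summed over the standard basis gives exactly the paper's sum over an orthonormal basis of that range.

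The differences are cosmetic. You actually verify the eigenvalue formulas \eqref{eq:Lsubstar}--\eqref{eq:Lstar}, which the paper simply takes as given. Your caveat that the drift is ``only positively'' homogeneous is unnecessary, since $\nabla G\cdot p$ is linear in $p$.
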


\begin{proof}
For \(p\neq0\), define
\[
\widehat p=\frac{p}{|p|},
\qquad
P_p=I-\widehat p\otimes\widehat p,
\qquad
\sigma=\sqrt{G}.
\]
The representation \eqref{eq:K} and the continuity of \(f\) imply that
\(\mathfrak F_f\) is continuous on the set \(\{p\neq0\}\). If
\(X\le Y\), then \(\operatorname{tr}(P_pX)\le\operatorname{tr}(P_pY)\);
therefore
\[
\mathfrak F_f(t,x,r,p,X)
\ge
\mathfrak F_f(t,x,r,p,Y),
\]
which proves degenerate ellipticity in the stated sign convention.

Fix \((t,x,r,X)\), and let
\[
(s_j,y_j,r_j,p_j,Y_j)\to(t,x,r,0,X),
\qquad p_j\neq0.
\]
For every \(j\),
\[
\mathfrak F_f(s_j,y_j,r_j,p_j,Y_j)
=
\cL(y_j,p_j,Y_j)-f(s_j,y_j).
\]
Since \(f(s_j,y_j)\to f(t,x)\), the elementary identities
\[
\liminf_j(a_j-b_j)=\liminf_j a_j-b,
\qquad
\limsup_j(a_j-b_j)=\limsup_j a_j-b
\]
whenever \(b_j\to b\), yield \eqref{eq:zero-gradient-limits}. Setting
\(X=0\) and using
\(\cLgeoLower(x,0,0)=\cLgeoUpper(x,0,0)=0\) gives \eqref{eq:origin-value}.

Let \(\lambda>0\), \(\rho\in\mathbb R\), and
\(Y=\lambda X+\rho p\otimes p\). A direct calculation gives
\[
\operatorname{tr}Y
-
\frac{Y(\lambda p)\cdot(\lambda p)}{|\lambda p|^2}
=
\lambda\left(
\operatorname{tr}X-\frac{Xp\cdot p}{|p|^2}
\right),
\]
because the two terms containing \(\rho|p|^2\) cancel. The first-order
part is homogeneous of degree one in \(p\); hence
\[
\cL\bigl(x,\lambda p,\lambda X+\rho p\otimes p\bigr)
=
\lambda\cL(x,p,X).
\]
Hence \(\cL\) satisfies the geometric homogeneity identity.

For \(p\neq0\),
\[
\left|\frac{Xp\cdot p}{|p|^2}\right|\le\|X\|,
\]
and consequently
\[
|\cL(x,p,X)|
\le
(d+1)g^*\|X\|
+
\|\nabla G\|_{L^\infty(\Omega)}|p|.
\]
Because \(\mathfrak F_f+f=\cL\), the estimate implies
\[
\sup_{\substack{t\in[0,T]\\x\in\overline\Omega}}
\left|\mathfrak F_f(t,x,r,p,X)+f(t,x)\right|
\longrightarrow0
\qquad\text{as }(p,X)\to(0,0),
\]
uniformly in \(r\). The case \(p=0\), \(X\neq0\), is governed by
\eqref{eq:zero-gradient-limits}.

It remains to establish \eqref{eq:ishii-structural-estimate}. Let
\(e_1,\ldots,e_{d-1}\) be an orthonormal basis of
\(\operatorname{range}P_p\). Applying the upper block-matrix inequality
to \((\sigma(x)e_\ell,\sigma(y)e_\ell)\) and summing over
\(\ell=1,\ldots,d-1\) gives
\[
G(x)\operatorname{tr}(P_pX)
-
G(y)\operatorname{tr}(P_pY)
\le
3(d-1)\zeta|\sigma(x)-\sigma(y)|^2
\le
C\zeta|x-y|^2.
\]
Since \(G\in C^2(\overline\Omega)\),
\[
-\bigl(\nabla G(x)-\nabla G(y)\bigr)\cdot p
\ge
-C|x-y||p|.
\]
Uniform continuity of \(f\) on the compact set
\([0,T]\times\overline\Omega\) yields a nondecreasing modulus
\(\omega_f\) such that
\[
-f(t,x)+f(s,y)
\ge
-\omega_f\bigl(|t-s|+|x-y|\bigr).
\]
Define
\[
\Theta
=
|t-s|+|x-y|(|p|+1)+\zeta|x-y|^2.
\]
These three inequalities imply
\[
\mathfrak F_f(t,x,r,p,X)
-
\mathfrak F_f(s,y,r,p,Y)
\ge
-2C\Theta-\omega_f(\Theta).
\]
Thus \eqref{eq:ishii-structural-estimate} holds with
\[
\omega_{f,T}(\rho)=2C\rho+\omega_f(\rho),
\qquad \rho\ge0.
\]

Finally, \(\mathfrak F_f\) is independent of \(r\), so for all \(r\ge q\),
\[
\mathfrak F_f(t,x,r,p,X)-\mathfrak F_f(t,x,q,p,X)=0
\ge0\,(r-q).
\]
Hence the operator is proper with monotonicity constant \(0\). Continuity
for \(p\neq0\), degenerate ellipticity, \eqref{eq:origin-value}, and
\eqref{eq:ishii-structural-estimate} verify the assumptions of
\cite[Theorem~3.6.4]{Giga2006}. The identities
\eqref{eq:zero-gradient-limits} determine the lower and upper limiting
extensions whenever \(p=0\) and \(X\neq0\).
\end{proof}

\section{Comparison, boundary behavior, and Perron construction}\label{sec:comparison-perron}
\begin{remark}[The inhomogeneous term is incorporated in the comparison operator]
\label{rem:inhomogeneity-comparison-operator}
The comparison theorem is applied to
$\mathfrak F_f=\cL-f$.  The identities \eqref{eq:zero-gradient-limits} determine the zero-gradient extensions required by \cite[Theorem~3.6.4]{Giga2006}, with the
continuous inhomogeneous term incorporated into the comparison operator.
\end{remark}

\begin{theorem}[Comparison with ordered parabolic boundary values]
\label{thm:comparison}
Fix a terminal time $T>0$ and let
$f\in C([0,T]\times\overline\Omega)$.  If $v$ is a bounded relaxed Dirichlet viscosity subsolution and $w$
a bounded relaxed Dirichlet viscosity supersolution of
\[
 \partial_t z-\cK(x,Dz,D^2z)=f
 \qquad\text{in }Q_T,
\]
in the sense of Definition~\ref{def:viscosity}, and assume explicitly that
\[
 v^*(\zeta)\le w_*(\zeta)
 \qquad\text{for every }\zeta\in\partial_P Q_T,
\]
then $v\le w$ in $Q_T$.
\end{theorem}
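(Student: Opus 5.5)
The plan is to reduce the statement to the comparison theorem for geometric equations with continuous inhomogeneity, \cite[Theorem~3.6.4]{Giga2006}. The structural hypotheses of that theorem are exactly what Lemma~\ref{lem:structural-comparison} supplies for $\mathfrak F_f=\cL-f$. The one genuinely new point is the relaxed Dirichlet formulation. The boundary inequalities in Definition~\ref{def:viscosity} allow either the boundary datum or the equation to hold on $\partial_PQ_T$, whereas the hypothesis $v^*\le w_*$ on $\partial_PQ_T$ is an ordering of the semicontinuous envelopes. So in the doubling argument the maximum only needs to be localized in the interior.

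Concretely, I argue by contradiction: suppose $\sup_{Q_T}(v-w)=:\theta>0$. First I replace $v$ by
\[
v_\kappa=v-\kappa/(T-t).
\]
This is still a subsolution, with strict inequality $\le -\kappa/T^2$, because $\cLgeoLower$ and $f$ do not depend on $v$ and the equation is independent of $r$. It also tends to $-\infty$ as $t\to T$. For $\kappa$ small, $\sup(v_\kappa-w)\ge\theta/2$. Next I double variables with
\[
\Phi(t,s,x,y)=v_\kappa(t,x)-w(s,y)-\frac{|x-y|^4}{4\alpha}-\frac{|t-s|^2}{2\alpha}.
\]
The quartic penalization is chosen, as in the Chen--Giga--Goto/Evans--Spruck theory, so that when $x=y$ the test gradient and Hessian vanish simultaneously. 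This matches \eqref{eq:origin-value}.

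The main step is to show that maximizers $(t_\alpha,s_\alpha,x_\alpha,y_\alpha)$ stay away from $\partial_PQ_T$ for small $\alpha$. By compactness and upper semicontinuity, any limit point $(\bar t,\bar x)$ has $t_\alpha-s_\alpha\to0$, $x_\alpha-y_\alpha\to0$, and the penalizations vanish. Moreover
\[
\limsup\Phi\le v_\kappa^*(\bar t,\bar x)-w_*(\bar t,\bar x).
\]
If $(\bar t,\bar x)\in\partial_PQ_T$, this is $\le0$ by hypothesis, which contradicts $\Phi\ge\theta/2$. Hence for small $\alpha$ both points lie in $Q_T$, and only the interior inequalities of Definition~\ref{def:viscosity} are used. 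The relaxed boundary conditions therefore never enter once the envelopes are ordered. This is where I expect to be careful: I must check that the max over $[0,T)$ with the $\kappa$-term cannot escape to $t=T$, and that the upper/lower envelope ordering is used at the right points.

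At interior points I split into two cases.

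If $x_\alpha=y_\alpha$, the spatial derivatives of the penalization vanish to second order. Testing $v$ at $(t_\alpha,x_\alpha)$ and $w$ at $(s_\alpha,y_\alpha)$ with $D\varphi=0$ and $D^2\varphi=0$, \eqref{eq:origin-value} gives
\[
\partial_t\varphi_1-f(t_\alpha,x_\alpha)\le-\kappa/T^2,
\qquad
\partial_t\varphi_2-f(s_\alpha,y_\alpha)\ge0.
\]
The two time derivatives are equal, so subtracting and using continuity of $f$ gives a contradiction as $\alpha\to0$.

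If $x_\alpha\ne y_\alpha$, I apply the parabolic Crandall--Ishii lemma. This produces $(a,p,X)\in\overline{\mathcal P}^{2,+}v_\kappa$ and $(a,p,Y)\in\overline{\mathcal P}^{2,-}w$ with common $p\ne0$. The matrices satisfy the block inequality with $\zeta\sim|x_\alpha-y_\alpha|^2/\alpha$. Subtracting the two inequalities and invoking \eqref{eq:ishii-structural-estimate} gives
\[
\kappa/T^2\le\omega_{f,T}\bigl(|t_\alpha-s_\alpha|+|x_\alpha-y_\alpha|(|p|+1)+\zeta|x_\alpha-y_\alpha|^2\bigr).
\]
Each argument tends to zero: $|p||x-y|\sim|x-y|^4/\alpha$, $\zeta|x-y|^2\sim|x-y|^4/\alpha$, and $|x-y|^4/\alpha\to0$ by the standard penalization lemma. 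This contradiction proves the claim.

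Since these steps are exactly the verification carried out in Lemma~\ref{lem:structural-comparison}, I would present the interior part by citing \cite[Theorem~3.6.4]{Giga2006} and write out only the localization away from $\partial_PQ_T$. Finally I let $\kappa\to0$ to conclude $v\le w$ in $Q_T$.
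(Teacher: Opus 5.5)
Your proposal is correct and follows the same route as the paper. The paper's proof likewise reduces the statement to \cite[Theorem~3.6.4]{Giga2006}, takes the structural hypotheses from Lemma~\ref{lem:structural-comparison}, and uses the explicit ordering $v^*\le w_*$ on $\partial_PQ_T$ as that theorem's boundary hypothesis. Your written-out doubling argument (the $\kappa/(T-t)$ shift, the quartic penalization, localization away from $\partial_PQ_T$, and the cases $x_\alpha=y_\alpha$ and $x_\alpha\neq y_\alpha$) is the standard proof behind that citation, and you carry it out correctly.
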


\begin{proof}
Define
\[
F(t,x,r,p,X)=\mathfrak F_f(t,x,r,p,X)
=
\cL(x,p,X)-f(t,x).
\]
Lemma~\ref{lem:structural-comparison} verifies the continuity condition for
\(p\neq0\), degenerate ellipticity, properness, the finite value at
\((p,X)=(0,0)\), and the matrix-continuity estimate required by
\cite[Theorem~3.6.4]{Giga2006}. The hypothesis
\[
v^*(\zeta)\le w_*(\zeta)
\qquad(\zeta\in\partial_PQ_T)
\]
is the boundary ordering assumed in that theorem. Therefore the comparison
result applies to \(v\) and \(w\) and gives \(v\le w\) in \(Q_T\).
\end{proof}

\begin{theorem}[Comparison for the geometric equation with monotone reaction]
\label{thm:reaction-comparison}
Fix \(T>0\). Let \(u\) be a bounded upper-semicontinuous viscosity
subsolution and \(v\) a bounded lower-semicontinuous viscosity supersolution of
\[
\partial_t z+\cL(x,Dz,D^2z)+\mu R_\eta(x,z)=0
\qquad\text{in }Q_T,
\]
with the geometric extensions \(\cLgeoLower\) and \(\cLgeoUpper\) at zero
gradient. If
\[
u^*(\zeta)\le v_*(\zeta)\qquad(\zeta\in\partial_PQ_T),
\]
then \(u\le v\) in \(Q_T\).
\end{theorem}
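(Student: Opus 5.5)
We plan to deduce the statement from Theorem~\ref{thm:comparison}. The reaction term is nondecreasing in $z$ (Lemma~\ref{lem:reaction-inhomogeneity}) but is not strictly monotone. We therefore first convert it into a properness constant by an exponential change of the unknown. For $\lambda>0$ to be chosen, set $\tilde u=e^{-\lambda t}u$ and $\tilde v=e^{-\lambda t}v$. Geometric homogeneity of $\cL$ (Lemma~\ref{lem:structural-comparison}) gives $\cL(x,e^{\lambda t}p,e^{\lambda t}X)=e^{\lambda t}\cL(x,p,X)$, and the same scaling holds for $\cLgeoLower,\cLgeoUpper$ at $p=0$ because they are positively homogeneous in $X$. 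Consequently $\tilde u$ is a subsolution of
\[
\partial_t z+\lambda z+\cL(x,Dz,D^2z)+\mu e^{-\lambda t}R_\eta(x,e^{\lambda t}z)=0,
\]
and $\tilde v$ is a supersolution of the same equation. The boundary ordering $\tilde u^*\le\tilde v_*$ on $\partial_PQ_T$ is preserved.

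Next we argue by contradiction. Suppose $\theta:=\sup_{Q_T}(\tilde u-\tilde v)>0$. We double variables in the standard way, maximizing
\[
\tilde u(t,x)-\tilde v(s,y)-\frac{|x-y|^2}{2\alpha}-\frac{|t-s|^2}{2\alpha}-\frac{\beta}{T-t}.
\]
We use the boundary ordering together with the relaxed boundary conditions of Definition~\ref{def:viscosity} to ensure that, for small $\alpha$, maxima lie in the interior. The reason is that the maxima accumulate at points where $\tilde u^*-\tilde v_*\ge\theta>0$, which cannot lie on $\partial_PQ_T$; there the relaxed condition forces the equation alternative.

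At the interior maximum we apply the parabolic Ishii lemma and subtract the two inequalities. The principal parts are handled exactly as in the proof of Giga's Theorem~3.6.4 via \eqref{eq:ishii-structural-estimate}. In the case of vanishing gradient $p=(x-y)/\alpha=0$, we use the standard modification with a quartic penalization $|x-y|^4/(4\alpha)$, so that $X,Y$ can be taken to be $0$ and \eqref{eq:origin-value} applies. The reaction terms contribute
\[
\mu e^{-\lambda t}\bigl[R_\eta(x,e^{\lambda t}a)-R_\eta(y,e^{\lambda s}b)\bigr],
\]
where $a>b$ are the two values. Monotonicity gives $R_\eta(x,e^{\lambda t}a)\ge R_\eta(x,e^{\lambda t}b)$, so the remaining error is
\[
R_\eta(x,e^{\lambda t}b)-R_\eta(y,e^{\lambda s}b),
\]
which vanishes as $\alpha\to0$ by continuity of $\Lambda$, boundedness of $b$, and $|t-s|\to0$. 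The $\lambda z$ term contributes at least $\lambda\theta/2$, which produces the contradiction.

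An alternative route is the freezing argument: set $f(t,x):=-\mu R_\eta(x,\cdot)$ evaluated along the solutions and invoke Theorem~\ref{thm:comparison} directly. This is circular because $u$ and $v$ differ, so we prefer the direct doubling argument above.

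The main obstacle is the zero-gradient case, where only the semicontinuous envelopes are available. We expect this to be handled exactly as in Giga's geometric theory, since the reaction enters only through the zeroth-order term and does not interact with the envelope structure. Monotonicity of $R_\eta$ is used only in its favorable direction. Even with $\lambda=0$, a strict-subsolution perturbation $u-\delta t$ would suffice, but we keep $\lambda$ for robustness. Letting $\beta\to0$ then yields $\tilde u\le\tilde v$, hence $u\le v$ in $Q_T$.
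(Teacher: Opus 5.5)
Your argument is correct, but it is organized differently from the paper's proof. The paper runs no doubling argument. It adds $\mu R_\eta(x,r)$ to $\cL$ and checks the hypotheses of Giga's Theorem~3.6.4 for the combined operator:
\begin{itemize}
\item the operator is proper, because $r\mapsto R_\eta(x,r)$ is nondecreasing;
\item $|R_\eta(x,r)-R_\eta(y,r)|\le\|D\Lambda\|_{L^\infty}|x-y|$ uniformly in $r$ (since $0\le H_\eta\le1$), so a linear term can be added to the modulus in \eqref{eq:ishii-structural-estimate};
\item the zero-gradient envelopes are $\cLgeoLower+\mu R_\eta$ and $\cLgeoUpper+\mu R_\eta$.
\end{itemize}
It then cites that theorem. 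You instead unfold the proof behind the citation. The same two facts about $R_\eta$ enter your subtracted inequality exactly where the paper's properness and modulus hypotheses are used: monotonicity replaces $e^{\lambda \hat t}a$ by $e^{\lambda\hat t}b$ at the same point, and Lipschitz continuity in $x$ at a fixed value controls what remains. Your version is self-contained and makes visible that only monotonicity in $r$ is needed, not strict monotonicity. The paper's version is two lines long but rests entirely on the cited theorem.

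A few points in your write-up could be tightened.
\begin{itemize}
\item The exponential rescaling is unnecessary. The penalty $\beta/(T-t)$ already contributes $\beta/(T-\hat t)^2\ge\beta/T^2>0$ to the difference of time derivatives, and properness suffices. You half-acknowledge this yourself.
\item The penalization must be quartic from the outset. You cannot switch to $|x-y|^4/(4\alpha)$ only after discovering $\hat x=\hat y$. With $|x-y|^2/(2\alpha)$, the zero-gradient case produces matrices of size $1/\alpha$, and $\cLgeoUpper(x,0,X)-\cLgeoLower(x,0,X)=G(x)\bigl(\lambda_{\max}(X)-\lambda_{\min}(X)\bigr)$ is then of order $1/\alpha$, so the two inequalities cannot be compared.
\item The relaxed boundary alternative plays no role. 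The hypothesis $u^*\le v_*$ on $\partial_PQ_T$ already prevents maximizers from accumulating there.
\item The reaction prefactors are $e^{-\lambda\hat t}$ and $e^{-\lambda\hat s}$, not a common $e^{-\lambda t}$. Their difference is only $O(|\hat t-\hat s|)$ and vanishes in the limit.
\end{itemize}
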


\begin{proof}
For \(p\ne0\), set
\[
\mathfrak F_\eta(x,r,p,X)
:=\cL(x,p,X)+\mu R_\eta(x,r).
\]
Its lower and upper extensions at \(p=0\) are
\[
\mathfrak F_{\eta,*}(x,r,0,X)
=\cLgeoLower(x,0,X)+\mu R_\eta(x,r),\qquad
\mathfrak F_\eta^{*}(x,r,0,X)
=\cLgeoUpper(x,0,X)+\mu R_\eta(x,r).
\]
If \(r\ge s\), then
\[
\mathfrak F_\eta(x,r,p,X)-\mathfrak F_\eta(x,s,p,X)
=\mu\Lambda(x)\bigl(H_\eta(r-q)-H_\eta(s-q)\bigr)\ge0,
\]
so the operator is proper. Furthermore, since \(0\le H_\eta\le1\),
\[
|R_\eta(x,r)-R_\eta(y,r)|
\le \|D\Lambda\|_{L^\infty(\Omega)}|x-y|.
\]
Adding this estimate to \eqref{eq:ishii-structural-estimate} gives the
matrix-continuity modulus required by
\cite[Theorem~3.6.4]{Giga2006}. The remaining hypotheses follow from
Lemma~\ref{lem:structural-comparison}; hence that comparison theorem gives
\(u\le v\) in \(Q_T\).
\end{proof}

\begin{remark}[Role of the ordered-boundary hypothesis]
\label{rem:ordered-boundary-values}
The boundary hypothesis in Theorem~\ref{thm:comparison} is the semicontinuous ordering $v^*(\zeta)\le w_*(\zeta)$ on $\partial_PQ_T$.  It is an independent comparison hypothesis for general relaxed Dirichlet viscosity sub- and supersolutions.  In the applications below, the boundary ordering follows either from equality with the prescribed Dirichlet data or from the strict boundary inequalities imposed on the comparison functions.
\end{remark}

\begin{lemma}[Fixed-parameter structural comparison]
\label{lem:fixed-parameter-structural-comparison}
Fix \(\eps,\nu>0\), \(S>0\), and
\(h\in C([0,S]\times\overline\Omega)\). Define
\[
Q_\eps(p):=\frac{p\otimes p}{\eps^2+|p|^2},\qquad
B_\eps(p):=I-Q_\eps(p),\qquad
A^{\eps,\nu}(x,p):=\nu I+G(x)B_\eps(p),
\]
and
\[
F_h^{\eps,\nu}(t,x,r,p,X)
:=-\operatorname{tr}\!\bigl(A^{\eps,\nu}(x,p)X\bigr)
-DG(x)\cdot p-h(t,x).
\]
Then \(F_h^{\eps,\nu}\) is continuous, proper, degenerate elliptic, and
satisfies condition \cite[(3.14)]{CrandallIshiiLions1992}. Consequently,
comparison holds for bounded viscosity sub- and supersolutions of the
corresponding Cauchy--Dirichlet problem on every bounded parabolic
domain.
\end{lemma}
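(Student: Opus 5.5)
We verify each structural property directly and then invoke the comparison theorem of \cite[Theorem~8.2 and Section~8]{CrandallIshiiLions1992} for the parabolic Cauchy--Dirichlet problem.

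\emph{Continuity, properness, ellipticity.} Since \(\eps>0\), the map \(p\mapsto Q_\eps(p)\) is smooth on all of \(\mathbb R^d\). The coefficients \(G\) and \(DG\) are continuous by (A2), and \(h\) is continuous. Hence \(F_h^{\eps,\nu}\) is continuous on \([0,S]\times\overline\Omega\times\mathbb R\times\mathbb R^d\times\mathbb S^d\). The operator does not depend on \(r\), so it is proper with constant \(0\). For ellipticity, \(0\le Q_\eps(p)\le \frac{|p|^2}{\eps^2+|p|^2}I<I\), so \(B_\eps(p)\ge0\). Therefore \(A^{\eps,\nu}(x,p)\ge\nu I>0\). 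If \(X\le Y\), then \(\operatorname{tr}(A^{\eps,\nu}(Y-X))\ge0\), which gives \(F_h^{\eps,\nu}(\cdot,X)\ge F_h^{\eps,\nu}(\cdot,Y)\).

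\emph{Condition (3.14).} This is the main step. We write \(A^{\eps,\nu}(x,p)=\Sigma(x,p)\Sigma(x,p)^{T}\) with a square root \(\Sigma\) that is Lipschitz in \(x\), uniformly in \(p\).

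Since \(B_\eps(p)\) is symmetric and nonnegative with eigenvalues in \([\eps^2/(\eps^2+|p|^2),1]\), and \(\nu>0\), we can take
\[
\Sigma(x,p)=\bigl(\nu I+G(x)B_\eps(p)\bigr)^{1/2}.
\]
The matrix under the root has eigenvalues bounded below by \(\nu\). The square root is smooth on matrices bounded below by \(\nu I\), with derivative bounded by \(C/\sqrt\nu\). Hence
\[
\|\Sigma(x,p)-\Sigma(y,p)\|\le C_\nu\,\|D G\|_\infty\,\|B_\eps(p)\|\,|x-y|\le C_\nu|x-y|,
\]
uniformly in \(p\).

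Now suppose the block inequality with parameter \(3\zeta\) holds. Test it with the vectors \((\Sigma(x,p)e_\ell,\Sigma(y,p)e_\ell)\) and sum over an orthonormal basis, exactly as in the proof of Lemma~\ref{lem:structural-comparison}. This gives
\[
\operatorname{tr}(A^{\eps,\nu}(x,p)X)-\operatorname{tr}(A^{\eps,\nu}(y,p)Y)\le 3\zeta\|\Sigma(x,p)-\Sigma(y,p)\|_F^2\le C\zeta|x-y|^2 .
\]

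The drift term is handled using \(G\in C^2\):
\[
|(DG(x)-DG(y))\cdot p|\le C|x-y||p|.
\]
The source term is handled by uniform continuity of \(h\) on the compact set \([0,S]\times\overline\Omega\), which gives a modulus \(\omega_h\).

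Combining the three estimates yields
\[
F_h^{\eps,\nu}(t,y,r,p,Y)-F_h^{\eps,\nu}(t,x,r,p,X)\le\omega\bigl(\zeta|x-y|^2+|x-y|(|p|+1)\bigr),
\]
with \(\omega(\rho)=2C\rho+\omega_h(\rho)\). This is (3.14) in its \(x\)-dependent form, with the time variable carried along as a parameter as in the parabolic version of the theorem.

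\emph{Conclusion.} The four properties above are exactly the hypotheses of the parabolic comparison result of \cite{CrandallIshiiLions1992} on the bounded parabolic domain \((0,S)\times\Omega\). Applying it gives \(v\le w\) for any bounded subsolution \(v\) and supersolution \(w\) ordered on \(\partial_P\). The only delicate point is the uniform Lipschitz bound for \(\Sigma\). Its constant depends on \(\nu^{-1/2}\), which is harmless here because \(\eps\) and \(\nu\) are fixed. Alternatively, one can use \(\sigma=\sqrt G\) and write \(A=\nu I+\sigma^2B_\eps\), treating the two pieces separately: the \(\nu I\) part contributes nothing, since its coefficient does not depend on \(x\), and the second part is handled with the factor \(\sigma(x)B_\eps(p)^{1/2}\).
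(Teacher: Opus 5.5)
Your proposal is correct and follows the paper's proof essentially step for step. It uses the same checks of continuity, properness and degenerate ellipticity, the same factorization $A^{\eps,\nu}=\Sigma^2$ with $\Sigma=(A^{\eps,\nu})^{1/2}$ tested against the block inequality, the same drift and source estimates (specialized to $p=\zeta(x-y)$ to read off (3.14)), and the same appeal to \cite[Theorem~8.2]{CrandallIshiiLions1992}. The only variation is in bounding $\|\Sigma(x,p)-\Sigma(y,p)\|_F$: you use the general Lipschitz property of the matrix square root on $\{M\ge\nu I\}$, whereas the paper notes that both matrices are affine in the common matrix $B_\eps(p)$, hence simultaneously diagonalizable, and reduces to the scalar estimate $|\sqrt{\nu+G(x)b}-\sqrt{\nu+G(y)b}|\le|G(x)-G(y)|/(2\sqrt\nu)$; your closing alternative with $\sigma=\sqrt G$ would even give a modulus independent of $\nu$.
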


\begin{proof}
Since \(0\le Q_\eps(p)\le I\),
\[
0\le B_\eps(p)\le I,\qquad
\nu I\le A^{\eps,\nu}(x,p)\le(\nu+g^*)I.
\]
The operator is continuous and independent of \(r\). Moreover,
\[
X\le Y
\quad\Longrightarrow\quad
F_h^{\eps,\nu}(t,x,r,p,X)
\ge F_h^{\eps,\nu}(t,x,r,p,Y),
\]
so it is degenerate elliptic and proper.

Let \(\alpha>0\), set \(p=\alpha(x-y)\), and suppose
\(X,Y\in\mathbb S^d\) satisfy
\[
-3\alpha\begin{pmatrix}I&0\\0&I\end{pmatrix}
\le
\begin{pmatrix}X&0\\0&-Y\end{pmatrix}
\le
3\alpha\begin{pmatrix}I&-I\\-I&I\end{pmatrix}.
\]
Put
\(\Sigma^{\eps,\nu}(x,p):=A^{\eps,\nu}(x,p)^{1/2}\). The preceding
matrix inequality implies
\begin{equation}\label{eq:fixed-parameter-ishii-diffusion}
\operatorname{tr}\!\bigl(A^{\eps,\nu}(x,p)X\bigr)
-\operatorname{tr}\!\bigl(A^{\eps,\nu}(y,p)Y\bigr)
\le3\alpha\left\|
\Sigma^{\eps,\nu}(x,p)-\Sigma^{\eps,\nu}(y,p)
\right\|_F^2.
\end{equation}
Because the two matrices are affine functions of the common matrix
\(B_\eps(p)\), they are simultaneously diagonalizable. For every
eigenvalue \(b\in[0,1]\) of \(B_\eps(p)\),
\[
\left|\sqrt{\nu+G(x)b}-\sqrt{\nu+G(y)b}\right|
=\frac{b|G(x)-G(y)|}
{\sqrt{\nu+G(x)b}+\sqrt{\nu+G(y)b}}
\le\frac{|G(x)-G(y)|}{2\sqrt\nu}.
\]
Hence
\begin{equation}\label{eq:fixed-parameter-square-root-modulus}
\left\|
\Sigma^{\eps,\nu}(x,p)-\Sigma^{\eps,\nu}(y,p)
\right\|_F^2
\le\frac{d\|DG\|_{L^\infty(\Omega)}^2}{4\nu}|x-y|^2.
\end{equation}
Let \(\omega_h\) be a modulus of continuity of \(h\) on
\([0,S]\times\overline\Omega\). Using
\eqref{eq:fixed-parameter-ishii-diffusion},
\eqref{eq:fixed-parameter-square-root-modulus}, and
\(|DG(x)-DG(y)|\le\|D^2G\|_{L^\infty}|x-y|\), one obtains
\begin{align*}
&F_h^{\eps,\nu}(t,y,r,p,Y)-F_h^{\eps,\nu}(t,x,r,p,X)\\
&\quad\le C_\nu\alpha|x-y|^2+\omega_h(|x-y|),\qquad
C_\nu:=\frac{3d\|DG\|_{L^\infty}^2}{4\nu}
+\|D^2G\|_{L^\infty}.
\end{align*}
Thus condition (3.14) holds with
\[
\omega_{\eps,\nu,h}(\rho):=C_\nu\rho+\omega_h(\rho),
\qquad \omega_{\eps,\nu,h}(\rho)\to0
\quad(\rho\downarrow0).
\]
The comparison conclusion follows from
\cite[Theorem~8.2]{CrandallIshiiLions1992}.
\end{proof}

\begin{lemma}[Strict comparison with ordered local boundary values]
\label{lem:strict-relaxed-boundary-comparison}
Let \(D=I\times O\subset Q_T\) be a bounded relative parabolic domain,
where \(I\) is an interval and \(O\subset\Omega\) is open. Let
\(\mathfrak L\) denote either \(\cL\), with extensions
\(\cLgeoLower,\cLgeoUpper\), or a fixed operator \(\cL^{\eps,\nu}\) with
\(\eps,\nu>0\). Write
\[
\partial_PD=\Gamma_{\mathrm{data}}\cup\Gamma_{\mathrm{art}},\qquad
\Gamma_{\mathrm{data}}:=\partial_PD\cap\partial_PQ_T.
\]
On an intersection of these two sets, all conditions below are imposed.

Suppose that \(u\) is a bounded upper-semicontinuous viscosity subsolution
of
\[
\partial_tu+\mathfrak L(x,Du,D^2u)=f\qquad\text{in }D
\]
and satisfies
\[
\limsup_{D\ni z\to\zeta}u(z)\le\gamma(\zeta)
\qquad(\zeta\in\Gamma_{\mathrm{data}}).
\]
Let \(b^+\in C^{1,2}(\overline D)\), and assume that for some
\(\delta_0>0\),
\[
\partial_tb^++\mathfrak L^{*}(x,Db^+,D^2b^+)\ge f+\delta_0
\quad\text{in }D,\qquad
b^+\ge\gamma+\delta_0
\quad\text{on }\Gamma_{\mathrm{data}},
\]
and
\[
\limsup_{D\ni z\to\zeta}u(z)\le b^+(\zeta)
\qquad(\zeta\in\Gamma_{\mathrm{art}}).
\]
Then \(u\le b^+\) in \(D\).

Dually, suppose that \(u\) is a bounded lower-semicontinuous viscosity
supersolution satisfying
\[
\gamma(\zeta)\le\liminf_{D\ni z\to\zeta}u(z)
\qquad(\zeta\in\Gamma_{\mathrm{data}}).
\]
If \(b^-\in C^{1,2}(\overline D)\) satisfies
\[
\partial_tb^-+\mathfrak L_{*}(x,Db^-,D^2b^-)\le f-\delta_0
\quad\text{in }D,\qquad
b^-\le\gamma-\delta_0
\quad\text{on }\Gamma_{\mathrm{data}},
\]
and
\[
b^-(\zeta)\le\liminf_{D\ni z\to\zeta}u(z)
\qquad(\zeta\in\Gamma_{\mathrm{art}}),
\]
then \(b^-\le u\) in \(D\). In the regularized case,
\(\mathfrak L_*=\mathfrak L^*=\mathfrak L\).
\end{lemma}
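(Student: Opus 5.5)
The plan is a direct maximum-point argument: since $b^+$ is a smooth strict supersolution, it can serve directly as the test function, and no doubling of variables is needed. I treat the subsolution case; the supersolution case is symmetric, with maxima replaced by minima and $\mathfrak L^*$ by $\mathfrak L_*$.

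Suppose, for contradiction, that $\sup_D(u-b^+)=:m>0$. Extend $u$ to $\overline D$ by its upper-semicontinuous envelope $\bar u(\zeta)=\limsup_{D\ni z\to\zeta}u(z)$. Then $\bar u-b^+$ is upper semicontinuous on the compact set $\overline D$, which I take to be $\overline I\times\overline O$ restricted to times below the terminal one; if the right endpoint of $I$ is open, I use the usual device of replacing $b^+$ by $b^+ +\kappa/(T_I-t)$ with small $\kappa$. That added term has positive time derivative, so the strict supersolution inequality is preserved, and it forces the maximum away from the terminal time. Hence $\bar u-b^+$ attains its maximum $m'\ge m>0$ at some $\hat z\in\overline D$.

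Next I rule out every boundary location. If $\hat z\in\Gamma_{\mathrm{art}}$, the hypothesis $\bar u\le b^+$ there gives $m'\le0$. If $\hat z\in\Gamma_{\mathrm{data}}$, then $\bar u(\hat z)\le\gamma(\hat z)\le b^+(\hat z)-\delta_0$, so $m'\le -\delta_0<0$. Where the two sets intersect, both conditions are imposed and either argument applies. Since $\partial_PD$ exhausts the relevant boundary of $D$ in the relative parabolic sense, with the top time face included in the parabolic interior, this forces $\hat z\in D$, possibly at the top face $t=\sup I$. Viscosity inequalities still hold there by the standard one-sided-in-time extension for parabolic problems.

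At the interior maximum $\hat z=(\hat t,\hat x)$ the function $u-b^+$ has a local maximum, so the subsolution definition with $\varphi=b^+$ applies:
\[
\partial_tb^+(\hat z)+\mathfrak L_*\bigl(\hat x,Db^+(\hat z),D^2b^+(\hat z)\bigr)\le f(\hat z).
\]
Here $\mathfrak L_*$ is $\cLgeoLower$ at zero gradient and $\cL$ otherwise. Comparing with the strict inequality $\partial_tb^++\mathfrak L^*\ge f+\delta_0$ requires $\mathfrak L_*\ge\mathfrak L^*$ at the same point, which fails exactly when $Db^+(\hat z)=0$, since there $\cLgeoLower\le\cLgeoUpper$ with strict inequality in general.

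This zero-gradient mismatch is the main obstacle. I would try first to read the hypothesis on $b^+$ as holding with the lower envelope whenever its gradient vanishes, consistent with how such barriers are built elsewhere with nonvanishing gradient. If that reading is unavailable, I would perturb the test function instead. By upper semicontinuity, $\hat z$ remains a strict local maximum of $u-b^+-|z-\hat z|^4$. For small $\xi$, I would use $b^+_\xi(t,x)=b^+(t,x)+\xi\cdot x$, which moves $Db^+_\xi(\hat z)$ off zero, and track the nearby maxima $z_\xi\to\hat z$. If $Db^+_\xi(z_\xi)\ne0$, the operator is continuous there, and the subsolution inequality at $z_\xi$ contradicts the strict inequality in the limit. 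This uses the lower semicontinuity of $\mathfrak L^*$ and the $\delta_0$ margin, with the error from $\xi\cdot x$ absorbed by continuity of $\cL$ in $p$ on compact sets of $\{p\ne0\}$ combined with the limits \eqref{eq:zero-gradient-limits}.

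For the regularized operator $\cL^{\eps,\nu}$, which is continuous, $\mathfrak L_*=\mathfrak L^*$, and the contradiction $f(\hat z)+\delta_0\le f(\hat z)$ is immediate. The dual statement follows the same steps with $\inf(u-b^-)$ and the supersolution test.
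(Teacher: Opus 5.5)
Your treatment of the boundary locations is fine, and so is the regularized case. For a continuous $\cL^{\eps,\nu}$, testing $u$ with $b^+$ at an interior maximum gives $f(\hat z)+\delta_0\le f(\hat z)$ at once, which is simpler than the paper's appeal to Crandall--Ishii--Lions. The gap is the geometric case at a maximum with $Db^+(\hat z)=0$. You isolate this case correctly, but your perturbation $b^++\xi\cdot x$ does not close it. At the nearby maxima $z_\xi$ the gradients $Db^+(z_\xi)+\xi+O(|z_\xi-\hat z|^3)$ tend to $0$, so continuity of $\cL$ on compact subsets of $\{p\neq0\}$ never applies. Their direction is also not controlled by $\xi$: $z_\xi$ may lie at distance of order $|\xi|^{1/3}$ from $\hat z$, and then $Db^+(z_\xi)$ dominates $\xi$. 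The limiting subsolution inequality therefore only gives some value of $\cL$ near $(\hat x,0,X)$, with $X=D^2b^+(\hat z)$, which can be as low as $\cLgeoLower(\hat x,0,X)$. To contradict the hypothesis you would need lower semicontinuity of $\mathfrak L^*$. But $\mathfrak L^*$ is the \emph{upper} envelope and fails to be lower semicontinuous exactly at $p=0$. Indeed, whenever $G(\hat x)\bigl(\lambda_{\max}(X)-\lambda_{\min}(X)\bigr)\ge\delta_0$, the inequalities $\partial_tb^++\cLgeoLower(\hat x,0,X)\le f$ and $\partial_tb^++\cLgeoUpper(\hat x,0,X)\ge f+\delta_0$ are compatible. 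So no argument that feeds $D^2b^+(\hat z)$, or a small perturbation of it, into both inequalities can conclude. The subcase $Db^+_\xi(z_\xi)=0$ is also left open, and reading the hypothesis with $\mathfrak L_*$ would change the statement.

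The missing idea is the mechanism of geometric comparison. The paper notes that $b^+\in C^{1,2}$ and $\cLgeoUpper$ is degenerate elliptic in $X$, so the pointwise inequality with $\mathfrak L^*$ makes $b^+$ a viscosity supersolution in $D$. It then applies the comparison theorem on $D$ (Giga's Theorem~3.6.4 for $\cL$) using $\limsup u\le b^+$ on $\partial_PD$.

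If you want a self-contained argument, double only the space variable and maximize $u(t,x)-b^+(t,y)-|x-y|^4/(4\alpha)$.
\begin{itemize}
\item If $x_\alpha\neq y_\alpha$, the translate $b^+\bigl(t,x-(x_\alpha-y_\alpha)\bigr)$ plus a constant touches $u$ from above at $(t_\alpha,x_\alpha)$. Its gradient is $Db^+(t_\alpha,y_\alpha)\neq0$, so $\mathfrak L_*=\mathfrak L^*=\cL$ there. Lipschitz continuity of $G$ and $\nabla G$, together with uniform continuity of $f$, gives a contradiction as $\alpha\to0$.
\item If $x_\alpha=y_\alpha$, the test function $b^+(t,y_\alpha)+|x-y_\alpha|^4/(4\alpha)$ has zero spatial gradient and zero Hessian. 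Since $\cLgeoLower(x,0,0)=0$, this gives $\partial_tb^+(t_\alpha,y_\alpha)\le f(t_\alpha,x_\alpha)$. Minimality in $y$ gives $Db^+(t_\alpha,y_\alpha)=0$ and $D^2b^+(t_\alpha,y_\alpha)\ge0$, hence $\cLgeoUpper\bigl(y_\alpha,0,D^2b^+(t_\alpha,y_\alpha)\bigr)\le0$. The hypothesis then yields $\partial_tb^+(t_\alpha,y_\alpha)\ge f(t_\alpha,y_\alpha)+\delta_0$, which is a contradiction.
\end{itemize}
The quartic penalty, whose Hessian vanishes on the diagonal, is exactly what your linear perturbation lacks.
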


\begin{proof}
The strict differential inequality makes \(b^+\) a viscosity
supersolution of the same equation in \(D\). The assumptions on
\(\Gamma_{\mathrm{data}}\) and \(\Gamma_{\mathrm{art}}\) give
\[
\limsup_{D\ni z\to\zeta}u(z)\le b^+(\zeta)
\qquad(\zeta\in\partial_PD).
\]
For the geometric operator, the proof of
Theorem~\ref{thm:comparison} applies on \(D\), because its
structural hypotheses are local and Lemma~\ref{lem:structural-comparison}
holds on every bounded subdomain. For the fixed regularized operator,
Lemma~\ref{lem:fixed-parameter-structural-comparison} gives comparison on
\(D\). In either case, comparison
gives \(u\le b^+\) in \(D\).

The lower assertion follows by applying the same argument to \(b^-\) as a
subsolution and \(u\) as a supersolution, using the displayed lower
boundary ordering.
\end{proof}

\begin{remark}[Geometry of the local domains]
\label{rem:artificial-edges}
No regularity of \(\Gamma_{\mathrm{art}}\) is required. The comparison
argument uses only the semicontinuous ordering on the full parabolic
boundary. In the applications below, every local set is chosen as a
bounded relative parabolic domain.
\end{remark}

\begin{lemma}[Perron closure under local boundary comparison]
\label{lem:perron-boundary-closure}
Fix \(T>0\), let \(\mathfrak L\) denote either \(\cL\), with its geometric
extensions, or a fixed \(\cL^{\eps,\nu}\) with \(\eps,\nu>0\), and consider
\[
\partial_tz+\mathfrak L(x,Dz,D^2z)=f\qquad\text{in }Q_T
\]
with continuous datum \(\gamma\) on \(\partial_PQ_T\). Assume comparison
for bounded sub- and supersolutions and the standard stability properties of
viscosity subsolutions under finite maxima and upper semicontinuous
regularization of locally bounded suprema.

Let \(\underline z\) and \(\overline z\) be bounded global sub- and
supersolutions with \(\underline z\le\overline z\) in \(Q_T\) and
\[
\limsup_{Q_T\ni z\to\zeta}\underline z(z)\le\gamma(\zeta)
\le\liminf_{Q_T\ni z\to\zeta}\overline z(z)
\qquad(\zeta\in\partial_PQ_T).
\]
Suppose that for every \(z_b\in\partial_PQ_T\) and \(\rho>0\) there are a
bounded relative parabolic domain \(D_b\subset Q_T\), a number
\(\sigma_{b,\rho}>0\), and functions
\(\beta^-_{b,\rho},\beta^+_{b,\rho}\in C^{1,2}(\overline{D_b})\) such that:
\begin{enumerate}[label=\textup{(B\arabic*)},leftmargin=2.8em]
\item
\[
\partial_t\beta^-_{b,\rho}
+\mathfrak L_*(x,D\beta^-_{b,\rho},D^2\beta^-_{b,\rho})
\le f-\sigma_{b,\rho},
\quad
\partial_t\beta^+_{b,\rho}
+\mathfrak L^*(x,D\beta^+_{b,\rho},D^2\beta^+_{b,\rho})
\ge f+\sigma_{b,\rho}
\]
in \(D_b\).
\item With
\[
\Gamma_{b,\mathrm{data}}:=\partial_PD_b\cap\partial_PQ_T,
\qquad
\partial_PD_b=\Gamma_{b,\mathrm{data}}\cup\Gamma_{b,\mathrm{art}},
\]
one has
\[
\beta^-_{b,\rho}\le\gamma-\sigma_{b,\rho}
\le\gamma+\sigma_{b,\rho}\le\beta^+_{b,\rho}
\quad\text{on }\Gamma_{b,\mathrm{data}},
\]
\[
\beta^-_{b,\rho}\le\underline z,
\qquad \overline z\le\beta^+_{b,\rho}
\quad\text{on }\Gamma_{b,\mathrm{art}},
\qquad
\beta^-_{b,\rho}\le\overline z,
\quad \underline z\le\beta^+_{b,\rho}
\quad\text{in }D_b.
\]
\item
\[
\liminf_{Q_T\ni z\to z_b}\beta^-_{b,\rho}(z)
\ge\gamma(z_b)-\rho,
\qquad
\limsup_{Q_T\ni z\to z_b}\beta^+_{b,\rho}(z)
\le\gamma(z_b)+\rho.
\]
\end{enumerate}
Then the Perron function is the unique bounded continuous Dirichlet
viscosity solution and attains \(\gamma\) on \(\partial_PQ_T\).
\end{lemma}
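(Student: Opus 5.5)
The plan is the standard Perron argument with local barriers at the boundary, in the Ishii style. Let \(\mathcal S\) be the family of bounded upper-semicontinuous subsolutions \(s\) with \(\underline z\le s\le\overline z\) in \(Q_T\). Define
\[
W(z):=\sup\{s(z):s\in\mathcal S\}.
\]
The family \(\mathcal S\) contains \(\underline z\), so it is nonempty. By the assumed stability of subsolutions under upper-semicontinuous regularization of suprema, the envelope \(W^*\) is a subsolution in \(Q_T\). The usual bump construction then shows that \(W_*\) is a supersolution in \(Q_T\). Suppose \(W_*-\varphi\) has a strict local minimum at an interior point where the supersolution inequality fails, with \(\mathfrak L^*\). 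By lower semicontinuity of \(\mathfrak L^*\), the smooth function \(\varphi+\delta-|z-z_0|^4\) is a strict subsolution near \(z_0\), with \(\mathfrak L_*\le\mathfrak L^*\). Taking its maximum with \(W\) produces an element of \(\mathcal S\) that exceeds \(W\) somewhere. Here one uses \(W\le\overline z\) and the fact that \(\overline z\) is a supersolution to ensure \(W_*<\overline z\) at the touching point, or else argues directly with \(\overline z\). This gives the contradiction. At zero gradient, the semicontinuous extensions from Lemma~\ref{lem:structural-comparison} are exactly what make this step valid.

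The key boundary step uses the barriers. Fix \(z_b\in\partial_PQ_T\) and \(\rho>0\). Apply Lemma~\ref{lem:strict-relaxed-boundary-comparison} on \(D_b\) to any \(s\in\mathcal S\) and \(\beta^+_{b,\rho}\).
\begin{itemize}
\item On \(\Gamma_{b,\mathrm{data}}\), we have \(\limsup s\le\limsup\underline z\)? Not directly, so instead bound \(s\le\overline z\). The hypothesis then gives \(\limsup s\le\liminf\overline z\ \ge\gamma\), which is the wrong direction.
\item Instead, use that \(\beta^+_{b,\rho}\ge\gamma+\sigma\) on data. The relaxed boundary condition for the subsolution \(s\) (the min-form in Definition~\ref{def:viscosity}), combined with the strict supersolution inequality of \(\beta^+\), prevents \(s-\beta^+\) from attaining a positive maximum at data points. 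This is the content of the strict inequalities built into Lemma~\ref{lem:strict-relaxed-boundary-comparison}.
\item On \(\Gamma_{b,\mathrm{art}}\), we have \(s\le\overline z\le\beta^+\).
\end{itemize}
Hence \(s\le\beta^+_{b,\rho}\) in \(D_b\), and therefore \(W^*\le\beta^+_{b,\rho}\) there. By (B3), \(\limsup_{z\to z_b}W^*\le\gamma(z_b)+\rho\).

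Dually, define \(\widetilde s:=\max\{\underline z,\beta^-_{b,\rho}\}\) on \(D_b\) and \(\underline z\) elsewhere. By (B2), \(\beta^-\le\underline z\) on \(\Gamma_{b,\mathrm{art}}\), so after the usual \(\sigma\)-margin adjustment the two definitions glue to a subsolution. By (B1) and the finite-maxima stability, \(\widetilde s\) is a subsolution, and \(\widetilde s\le\overline z\) by (B2). So \(\widetilde s\in\mathcal S\), and \(W_*\ge\beta^-_{b,\rho}\) near \(z_b\). Thus \(\liminf W_*\ge\gamma(z_b)-\rho\). Letting \(\rho\downarrow0\) gives \(W^*=W_*=\gamma\) at \(z_b\), continuously from inside.

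Finally, comparison (Theorem~\ref{thm:comparison}, or Lemma~\ref{lem:fixed-parameter-structural-comparison}) is applied to \(W^*\) and \(W_*\), which have ordered parabolic boundary values. This gives \(W^*\le W_*\), so \(W\) is continuous and is a Dirichlet viscosity solution attaining \(\gamma\) classically. Any other bounded continuous solution attaining \(\gamma\) is compared with \(W\) both ways, which gives uniqueness.

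The main obstacle is the gluing in the lower barrier step. One must check that \(\max\{\underline z,\beta^-\}\), extended by \(\underline z\) outside \(D_b\), is a subsolution near \(\Gamma_{b,\mathrm{art}}\). My first attempt would use the strict margin \(\sigma_{b,\rho}\) to shift \(\beta^-\) down slightly, so that it lies strictly below \(\underline z\) in a neighborhood of \(\Gamma_{b,\mathrm{art}}\) relative to \(\overline{D_b}\). The maximum then coincides with \(\underline z\) there.
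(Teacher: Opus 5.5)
Your overall architecture (the envelopes $W^*$ and $W_*$, the bump construction, the local barriers, the final comparison) matches the paper's. The upper-barrier step, however, has a genuine gap, and it comes from how you define $\mathcal S$. The paper's family consists of bounded upper-semicontinuous subsolutions $s$ with $\underline z\le s\le\overline z$ \emph{and} $\limsup_{Q_T\ni\xi\to\zeta}s(\xi)\le\gamma(\zeta)$ for every $\zeta\in\partial_PQ_T$. That last inequality is exactly the hypothesis on $\Gamma_{b,\mathrm{data}}$ under which Lemma~\ref{lem:strict-relaxed-boundary-comparison} yields $s\le\beta^+_{b,\rho}$ in $D_b$. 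The lemma assumes this ordering; it does not extract it from the min-form condition of Definition~\ref{def:viscosity} (compare Remark~\ref{rem:ordered-boundary-values}).

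With your family as written, the conclusion is false. Work in the setting of Proposition~\ref{prop:dirichlet}, where $\underline z=v^0-C_0t$ and $\overline z=v^0+C_0t$, and take $\delta,\kappa_0$ from Lemma~\ref{lem:boundary-distance-control}. Fix $0<a\le C_0$ and $A>0$ with $A\kappa_0\ge M+a$ and $A\delta>(a+C_0)T$. Since $\cK(x,-Ap,-AX)=-A\cK(x,p,X)$, \eqref{eq:distance-curvature} shows that $at-Ad_\partial(x)$ is a classical subsolution with nonvanishing gradient on $(0,T)\times\{0<d_\partial<\delta\}$. In that strip $v^0=0$, so $at-Ad_\partial\le at\le C_0t=\overline z$, and the function lies strictly below $\underline z$ near $\{d_\partial=\delta\}$. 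Hence $s:=\max\{\underline z,at-Ad_\partial\}$ in the strip, $s:=\underline z$ elsewhere, belongs to your $\mathcal S$. Its lateral boundary values are $at>0=\gamma$, so your $W$ cannot attain $\gamma$.

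Suppose instead you intend $\mathcal S$ to consist of relaxed Dirichlet subsolutions. Your second bullet is then still unproved. Neither Lemma~\ref{lem:strict-relaxed-boundary-comparison} nor Theorem~\ref{thm:comparison} covers a subsolution satisfying only the min-form condition on the data part, and the direct touching argument you sketch does not close. Take a maximum of $s-\beta^+_{b,\rho}$ at which $D\beta^+_{b,\rho}=0$, and set $Y:=D^2\beta^+_{b,\rho}$ there. The subsolution test gives $\partial_t\beta^+_{b,\rho}+\cLgeoLower(x,0,Y)\le f$, while (B1) gives $\partial_t\beta^+_{b,\rho}+\cLgeoUpper(x,0,Y)\ge f+\sigma_{b,\rho}$. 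These are compatible whenever $G(x)\bigl(\lambda_{\max}(Y)-\lambda_{\min}(Y)\bigr)\ge\sigma_{b,\rho}$. This is why the paper passes through a comparison theorem rather than pointwise touching.

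The remedy is to put the data constraint into $\mathcal S$, as the paper does. Then:
\begin{itemize}
\item $\underline z\in\mathcal S$ by hypothesis;
\item the bump is supported in $C$ with $\overline C\subset Q_T$, so it preserves the constraint;
\item the glued lower barrier inherits the constraint from $\beta^-_{b,\rho}\le\gamma-\sigma_{b,\rho}$ on $\Gamma_{b,\mathrm{data}}$;
\item Lemma~\ref{lem:strict-relaxed-boundary-comparison} applies verbatim.
\end{itemize}

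Two smaller points. First, in the bump step it is upper, not lower, semicontinuity of $\mathfrak L^*$ that keeps the strict inequality in a neighborhood of $z_0$. Second, your margin shift for the gluing would need lower semicontinuity of $\underline z$, which you do not have, but it is unnecessary on $\Gamma_{b,\mathrm{art}}$. Because $\widetilde s\ge\underline z$ with equality off $D_b$, every upper test function for $\widetilde s$ at such a point is also one for $\underline z$. Upper semicontinuity there follows from the non-strict inequality in (B2) and the continuity of $\beta^-_{b,\rho}$ on $\overline{D_b}$.
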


\begin{proof}
Let \(\mathcal S\) consist of the bounded upper-semicontinuous subsolutions
\(z\) satisfying
\[
\underline z\le z\le\overline z\quad\text{in }Q_T,
\qquad
\limsup_{Q_T\ni\xi\to\zeta}z(\xi)\le\gamma(\zeta)
\quad(\zeta\in\partial_PQ_T).
\]
The boundary hypothesis on \(\underline z\) gives
\(\underline z\in\mathcal S\), so \(\mathcal S\ne\varnothing\). Define
\[
Z:=\sup_{z\in\mathcal S}z,
\qquad Z^*:=\limsup{}^*Z,
\qquad Z_*:=\liminf{}_*Z.
\]
The stability result for upper semicontinuous regularizations of locally
bounded suprema, as used in \cite[Theorem~4.1 and Remark~4.5]{CrandallIshiiLions1992}, gives that \(Z^*\) is a subsolution.
We verify that \(Z_*\) is a supersolution. Let
\(z_0=(t_0,x_0)\in Q_T\), and suppose that \(Z_*-\varphi\) has a strict
local minimum equal to zero at \(z_0\), with \(\varphi\in C^{1,2}\). If
the supersolution inequality failed, then
\(Z_*(z_0)<\overline z(z_0)\). Indeed, equality would imply that
\(\overline z-\varphi\) has a local minimum at \(z_0\), and the
supersolution inequality for \(\overline z\) would contradict the assumed
failure. Hence, after choosing a bounded parabolic domain \(C\) with
\(\overline C\subset Q_T\), there exist \(\eta,\delta>0\) such that
\[
\partial_t\varphi+\mathfrak L^*(x,D\varphi,D^2\varphi)-f\le-2\eta
\quad\text{in }C,
\qquad
\varphi+2\delta\le Z_*\quad\text{on }\partial_PC,
\qquad
\varphi+2\delta\le\overline z\quad\text{in }\overline C.
\]
By compactness of \(\partial_PC\) and the definition of \(Z_*\), finitely
many members of \(\mathcal S\) have a maximum \(z_\delta\in\mathcal S\)
that satisfies
\(z_\delta\ge\varphi+\delta\) on \(\partial_PC\). The function
\[
\widetilde z_\delta=
\begin{cases}
\max\{z_\delta,\varphi+\delta\},&\text{in }C,\\
z_\delta,&\text{in }Q_T\setminus C,
\end{cases}
\]
is a subsolution because
\(\mathfrak L_*\le\mathfrak L^*\), belongs to \(\mathcal S\), and exceeds
\(Z\) at points
approaching \(z_0\), a contradiction. Hence \(Z_*\) is a supersolution.

Fix \(z_b\in\partial_PQ_T\) and \(\rho>0\). For every
\(z\in\mathcal S\), conditions (B1)--(B2) and
Lemma~\ref{lem:strict-relaxed-boundary-comparison} give
\[
z\le\beta^+_{b,\rho}\qquad\text{in }D_b.
\]
Taking the supremum and then the upper semicontinuous regularization, (B3)
yields
\[
\limsup_{Q_T\ni z\to z_b}Z^*(z)\le\gamma(z_b)+\rho.
\]
Similarly,
\[
\widetilde\beta^-_{b,\rho}:=
\begin{cases}
\max\{\underline z,\beta^-_{b,\rho}\},&\text{in }D_b,\\
\underline z,&\text{in }Q_T\setminus D_b
\end{cases}
\]
belongs to \(\mathcal S\), and (B3) gives
\[
\liminf_{Q_T\ni z\to z_b}Z_*(z)\ge\gamma(z_b)-\rho.
\]
Applying these inequalities with \(\rho=1/n\) gives
\[
Z^*(z_b)\le\gamma(z_b)\le Z_*(z_b).
\]
Since \(Z_*\le Z^*\), equality holds on \(\partial_PQ_T\). The assumed
comparison principle applied to \((Z^*,Z_*)\) gives \(Z^*\le Z_*\) in
\(Q_T\); hence \(Z=Z^*=Z_*\) is continuous, solves the equation, and
attains the prescribed datum. Uniqueness follows from comparison.
\end{proof}

\begin{lemma}[Uniform boundary-distance estimate]
\label{lem:boundary-distance-control}
There exist constants
\[
\delta\in(0,\delta_b]
\qquad\text{and}\qquad
\kappa_0>0
\]
such that
\begin{equation}
\label{eq:distance-curvature}
\cK\bigl(x,Dd_\partial(x),D^2d_\partial(x)\bigr)
\le -\kappa_0
\end{equation}
for every \(x\in\Omega\) satisfying
\[
0<d_\partial(x)<\delta.
\]

Let \(T,M>0\), and suppose that
\[
f\in C([0,T]\times\overline{\Omega}),
\qquad
-M\le f\le0
\quad\text{in }[0,T]\times\overline{\Omega},
\]
and
\[
f(t,y)=0
\qquad
\text{for every }(t,y)\in(0,T]\times\partial\Omega.
\]
Let \(v\in C([0,T]\times\overline\Omega)\) be a Dirichlet viscosity
solution attaining the prescribed initial and spatial boundary data and
satisfying
\[
\partial_t v-\cK(x,Dv,D^2v)=f
\qquad\text{in }(0,T]\times\Omega.
\]
with
\[
v(0,x)=v^0(x),
\qquad
0\le v^0\le1
\quad\text{in }\overline{\Omega},
\]
and assume that
\[
v^0(x)=0
\qquad
\text{whenever }d_\partial(x)\le2\delta_b.
\]
Then there exists a constant \(A_{M,T}>0\), independent of \(v\), such
that
\begin{equation}
\label{eq:boundary-distance-bound}
-A_{M,T}d_\partial(x)
\le v(t,x)
\le A_{M,T}d_\partial(x)
\end{equation}
for every
\[
(t,x)\in[0,T]\times\Omega
\qquad\text{with}\qquad
0<d_\partial(x)<\delta.
\]
In particular, the estimate gives the quantitative boundary modulus
\[
\lim_{\Omega\ni x\to y}v(t,x)=0
\qquad
\text{for every }(t,y)\in[0,T]\times\partial\Omega.
\]
\end{lemma}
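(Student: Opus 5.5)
The proof has two parts: a geometric computation for \eqref{eq:distance-curvature}, and a barrier argument for \eqref{eq:boundary-distance-bound}.

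\emph{Part 1: the curvature estimate.}
Since $\partial\Omega$ is $C^{4+\alpha}$, $d_\partial$ is $C^{4+\alpha}$ on a collar $\{0\le d_\partial<\delta_1\}$, with $|Dd_\partial|=1$ there.
For $x$ in the collar let $y$ be its nearest boundary point, and let $\kappa_i(y)$ be the principal curvatures with respect to the outward normal. The standard formula gives
\[
\operatorname{tr}D^2d_\partial(x)=-\sum_i\frac{\kappa_i(y)}{1-d_\partial(x)\kappa_i(y)} .
\]
Because $Dd_\partial$ lies in the kernel of $D^2d_\partial$, the normal term $D^2d_\partial\,Dd_\partial\cdot Dd_\partial$ vanishes. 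Take $\delta\le\min\{\delta_b,\delta_1\}$ small. On $\{d_\partial\le2\delta_b\}$ the coefficient $G$ is constant, so $\nabla G=0$, and
\[
\cK(x,Dd_\partial,D^2d_\partial)=G\operatorname{tr}D^2d_\partial=-G\sum_i\frac{\kappa_i}{1-d_\partial\kappa_i}.
\]
As $d_\partial\to0$ this sum tends to $\sum_i\kappa_i(y)=H_{\partial\Omega}(y)\ge H_0$, uniformly in $y$. Shrinking $\delta$ therefore gives \eqref{eq:distance-curvature} with $\kappa_0=g_*H_0/2$ (with $H_{\partial\Omega}$ normalized as the sum of the principal curvatures).

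\emph{Part 2: the barrier.}
The upper barrier is built from a concave profile of the distance,
\[
b^+(t,x)=\psi(d_\partial(x)),\qquad \psi(s)=A s,
\]
possibly with a small concave correction. On $D=(0,T)\times\{0<d_\partial<\delta\}$ the gradient $Db^+=A\,Dd_\partial$ is nonzero, so the operator is evaluated classically:
\[
\partial_tb^+-\cK(x,Db^+,D^2b^+)=-A\,\cK(x,Dd_\partial,D^2d_\partial)\ge A\kappa_0\ge0\ge f .
\]
Here geometric homogeneity kills $\psi''$ in the normal direction, and strictness comes from $A\kappa_0>0$. So $b^+$ is a strict supersolution, while $v$ is a solution.

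Boundary ordering for $b^+$ is checked on each piece of $\partial_PD$:
\begin{itemize}
\item On $\{d_\partial=0\}$ both $v$ and $b^+$ vanish. To get the strict margin needed in Lemma~\ref{lem:strict-relaxed-boundary-comparison}, use $b^++\sigma$ and let $\sigma\downarrow0$.
\item At $t=0$ we have $v^0=0\le b^+$ on the collar, since $\delta\le\delta_b$.
\item On the artificial face $\{d_\partial=\delta\}$ we need $v\le A\delta$.
\end{itemize}

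The artificial face is the main obstacle, because it needs a global bound on $v$. Comparison (Theorem~\ref{thm:comparison}) with the constant supersolution $1$ (note $f\le0$) and the subsolution $-Mt$ gives $-MT\le v\le1$. Choosing $A\ge(1+MT)/\delta$ then secures the artificial face, and $A$ depends only on $M,T,\delta$, not on $v$. With this choice Lemma~\ref{lem:strict-relaxed-boundary-comparison} gives $v\le A\,d_\partial$ on the collar.

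The lower barrier is $b^-=-A\,d_\partial$. By the same homogeneity,
\[
\partial_tb^--\cK(x,Db^-,D^2b^-)=A\,\cK(x,Dd_\partial,D^2d_\partial)\le -A\kappa_0 .
\]
This must lie below $f\ge-M$, which forces $A\kappa_0\ge M$ (plus a margin). Taking
\[
A_{M,T}=\max\Bigl\{\frac{1+MT}{\delta},\ \frac{M+1}{\kappa_0}\Bigr\}
\]
makes $b^-$ a strict subsolution. Its boundary ordering is symmetric to the upper case, using $v\ge-MT$ on the artificial face, and comparison gives $v\ge -A\,d_\partial$. The boundary modulus follows immediately from \eqref{eq:boundary-distance-bound}.
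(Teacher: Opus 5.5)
Your proposal is correct and follows essentially the same route as the paper. Both arguments use the same collar computation $\cK(x,Dd_\partial,D^2d_\partial)=-G\,H_{d_\partial}\le -g_*H_0/2$, and both obtain the global bound $-MT\le v\le 1$ from spatially constant barriers. Both then apply the $\sigma$-shifted linear distance barriers $\pm A\,d_\partial$, with $A\delta\ge 1+MT$ and $A\kappa_0\ge M$, via Lemma~\ref{lem:strict-relaxed-boundary-comparison}. The differences are only cosmetic: you get the strict margin for $b^-$ from $A\kappa_0\ge M+1$ rather than from a $\sigma(1+t)$ term, you invoke the non-strict comparison theorem directly for the global bounds, and you obtain the curvature bound by a limit argument rather than an explicit choice of $\delta$.
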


\begin{proof}
Since \(\partial\Omega\) is of class \(C^{4+\alpha}\), there exists
\(\delta_0>0\) such that
\[
d_\partial\in
C^{4+\alpha}\bigl(
\{x\in\Omega:0<d_\partial(x)<\delta_0\}
\bigr).
\]
Moreover,
\[
|Dd_\partial(x)|=1,
\qquad
D^2d_\partial(x)\,Dd_\partial(x)=0
\]
whenever \(0<d_\partial(x)<\delta_0\).

Let
\[
\kappa_1(y),\ldots,\kappa_{d-1}(y)
\]
denote the principal curvatures of \(\partial\Omega\) at
\(y\in\partial\Omega\), computed with respect to the outward unit normal,
and define
\[
K_\partial
:=
\max_{y\in\partial\Omega}
\max_{1\le\ell\le d-1}
|\kappa_\ell(y)|.
\]
For \(s\in(0,\delta_0)\), the principal curvatures of the level set
\[
\{x\in\Omega:d_\partial(x)=s\}
\]
at the point corresponding to \(y\in\partial\Omega\) are
\[
\kappa_\ell(s,y)
=
\frac{\kappa_\ell(y)}
     {1-s\kappa_\ell(y)},
\qquad
1\le\ell\le d-1.
\]
Hence its mean curvature is
\[
H_s(y)
=
\sum_{\ell=1}^{d-1}
\frac{\kappa_\ell(y)}
     {1-s\kappa_\ell(y)}.
\]

Choose
\[
0<\delta\le\min\{\delta_0,\delta_b\}
\]
such that
\[
\delta K_\partial\le\frac12
\qquad\text{and}\qquad
2(d-1)\delta K_\partial^2\le\frac{H_0}{2}.
\]
For \(0<s<\delta\), the first inequality gives
\[
|1-s\kappa_\ell(y)|\ge\frac12.
\]
Therefore,
\begin{align*}
|H_s(y)-H_{\partial\Omega}(y)|
&=
\left|
\sum_{\ell=1}^{d-1}
\frac{s\kappa_\ell(y)^2}
     {1-s\kappa_\ell(y)}
\right| \le
2s\sum_{\ell=1}^{d-1}|\kappa_\ell(y)|^2 \le
2(d-1)sK_\partial^2 \le
\frac{H_0}{2}.
\end{align*}
Combining
\[
H_{\partial\Omega}(y)\ge H_0
\]
with the preceding estimate gives
\[
H_s(y)\ge\frac{H_0}{2}.
\]

The outward unit normal to the level set
\(\{d_\partial=s\}\) is \(-Dd_\partial\). Consequently,
\[
\operatorname{tr}\bigl(D^2d_\partial\bigr)
=
\operatorname{tr}
\left(
\bigl(I-Dd_\partial\otimes Dd_\partial\bigr)
D^2d_\partial
\right)
=
-H_s.
\]
Because \(G\) is constant on
\[
\{x\in\Omega:0<d_\partial(x)<\delta_b\},
\]
one has \(DG=0\) there. Thus, for
\(0<d_\partial(x)<\delta\),
\begin{align*}
\cK\bigl(x,Dd_\partial,D^2d_\partial\bigr)
&=
-G(x)H_{d_\partial(x)}(x) \le
-\frac{g_*H_0}{2}.
\end{align*}
Therefore, \eqref{eq:distance-curvature} holds with
\[
\kappa_0:=\frac{g_*H_0}{2}.
\]

For \(\sigma>0\), define
\[
\overline q_\sigma(t,x)
:=
1+\sigma(1+t)
\]
and
\[
\underline q_\sigma(t,x)
:=
-Mt-\sigma(1+t).
\]
A direct calculation gives
\[
\partial_t\overline q_\sigma=\sigma,
\qquad
D\overline q_\sigma=0,
\qquad
D^2\overline q_\sigma=0,
\]
and
\[
\partial_t\underline q_\sigma=-M-\sigma,
\qquad
D\underline q_\sigma=0,
\qquad
D^2\underline q_\sigma=0.
\]
Consequently,
\[
\partial_t\overline q_\sigma
-
\cK\bigl(x,D\overline q_\sigma,D^2\overline q_\sigma\bigr)
\ge f+\sigma
\]
and
\[
\partial_t\underline q_\sigma
-
\cK\bigl(x,D\underline q_\sigma,D^2\underline q_\sigma\bigr)
\le f-\sigma.
\]
The corresponding initial inequalities and the inequalities on
\((0,T]\times\partial\Omega\) required by
Lemma~\ref{lem:strict-relaxed-boundary-comparison} are also satisfied.
Hence
\[
-Mt-\sigma(1+t)
\le v(t,x)
\le
1+\sigma(1+t)
\]
for every
\[
(t,x)\in[0,T]\times\overline{\Omega}.
\]
Applying this inequality with \(\sigma=1/n\), \(n\in\mathbb N\), gives
\[
-Mt-\frac{1+t}{n}
\le v(t,x)
\le
1+\frac{1+t}{n}
\]
for every \(n\). Since these inequalities hold for every \(n\),
\begin{equation}
\label{eq:global-v-bounds}
-Mt\le v(t,x)\le1
\qquad
\text{on }[0,T]\times\overline{\Omega}.
\end{equation}

Choose \(A_{M,T}>0\) so that
\[
A_{M,T}\delta\ge1+MT
\qquad\text{and}\qquad
A_{M,T}\kappa_0\ge M.
\]
Set
\[
D_\delta
:=
(0,T)\times
\{x\in\Omega:0<d_\partial(x)<\delta\},
\]
and, for \(\sigma>0\), define
\[
\overline b_\sigma(t,x)
:=
A_{M,T}d_\partial(x)+\sigma(1+t)
\]
and
\[
\underline b_\sigma(t,x)
:=
-A_{M,T}d_\partial(x)-\sigma(1+t).
\]
The homogeneity of \(\cK\) and
\eqref{eq:distance-curvature} imply
\[
\partial_t\overline b_\sigma
-
\cK\bigl(x,D\overline b_\sigma,D^2\overline b_\sigma\bigr)
\ge f+\sigma
\]
and
\[
\partial_t\underline b_\sigma
-
\cK\bigl(x,D\underline b_\sigma,D^2\underline b_\sigma\bigr)
\le f-\sigma
\]
in \(D_\delta\).

On the portion of \(\partial_PD_\delta\) contained in
\(\partial_PQ_T\), the initial condition and the condition on
\((0,T]\times\partial\Omega\) give
\[
\underline b_\sigma\le\gamma-\sigma
\qquad\text{and}\qquad
\gamma+\sigma\le\overline b_\sigma.
\]
On the remaining portion
\[
(0,T)\times\{x\in\Omega:d_\partial(x)=\delta\},
\]
the estimate \eqref{eq:global-v-bounds} and the choice
\(A_{M,T}\delta\ge1+MT\) yield
\[
\underline b_\sigma\le v\le\overline b_\sigma.
\]
Lemma~\ref{lem:strict-relaxed-boundary-comparison} therefore gives
\[
-A_{M,T}d_\partial(x)-\sigma(1+t)
\le v(t,x)
\le
A_{M,T}d_\partial(x)+\sigma(1+t)
\]
for every \((t,x)\in D_\delta\).

Taking \(\sigma=1/n\) gives
\[
-A_{M,T}d_\partial(x)-\frac{1+t}{n}
\le v(t,x)
\le
A_{M,T}d_\partial(x)+\frac{1+t}{n}
\]
for every \(n\in\mathbb N\). These inequalities imply
\[
-A_{M,T}d_\partial(x)
\le v(t,x)
\le
A_{M,T}d_\partial(x),
\]
which proves \eqref{eq:boundary-distance-bound}. Finally, if
\(y\in\partial\Omega\), then \(d_\partial(x)\to0\) as
\(\Omega\ni x\to y\), and hence
\[
\lim_{\Omega\ni x\to y}v(t,x)=0.
\]
\end{proof}

\begin{lemma}[Uniform initial-time estimate for the geometric equation]\label{lem:geometric-initial-control}
Let $w^0\in C^2(\overline{\Omega})$ have zero boundary value, and let
$f\in C([0,T]\times\overline{\Omega})$ satisfy $\norm{f}_{L^\infty}\le M$.  There is a
constant $C_0$, depending on $w^0$, $M$, and the fixed coefficients, such that every continuous
Dirichlet viscosity solution attaining the homogeneous data on $(0,T)\times\partial\Omega$
\[
\partial_t w-\cK(x,\nabla w,D^2w)=f,\qquad
w=0\ \text{on }(0,T)\times\partial\Omega,
\]
with initial datum $w^0$ satisfies
\[
\norm{w(t,\cdot)-w^0}_{L^\infty(\Omega)}\le C_0t
\qquad(0\le t\le T).
\]
\end{lemma}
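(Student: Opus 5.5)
The plan is to compare $w$ with the explicit barriers
\[
w^\pm_\sigma(t,x):=w^0(x)\pm\bigl(C_0t+\sigma(1+t)\bigr),\qquad\sigma>0,
\]
by means of Lemma~\ref{lem:strict-relaxed-boundary-comparison}, or directly Theorem~\ref{thm:comparison}, on the whole of $Q_T$. We then let $\sigma\downarrow0$, exactly as in the proof of Lemma~\ref{lem:boundary-distance-control}. The central quantitative step is a uniform bound on the geometric operator evaluated along $w^0$. For $p=Dw^0(x)\ne0$ one has $|\cL(x,p,X)|\le (d+1)g^*\|X\|+\|\nabla G\|_{L^\infty}|p|$, as shown in the proof of Lemma~\ref{lem:structural-comparison}. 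At points where $Dw^0(x)=0$, formulas \eqref{eq:Lsubstar}--\eqref{eq:Lstar} give $|\cLgeoLower|,|\cLgeoUpper|\le 2dg^*\|D^2w^0(x)\|$. Hence
\[
K_0:=\sup_{x\in\overline\Omega}\max\bigl\{|\cLgeoLower(x,Dw^0,D^2w^0)|,\,|\cLgeoUpper(x,Dw^0,D^2w^0)|\bigr\}\le C\bigl(g^*\|w^0\|_{C^2}+\|\nabla G\|_{L^\infty}\|w^0\|_{C^1}\bigr)<\infty .
\]
We set $C_0:=K_0+M$.

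Next we verify the strict differential inequalities. Adding a function of $t$ alone does not change $Dw^\pm_\sigma$ or $D^2w^\pm_\sigma$, and $\partial_tw^\pm_\sigma=\pm(C_0+\sigma)$. This gives
\[
\partial_tw^+_\sigma+\cLgeoUpper(x,Dw^+_\sigma,D^2w^+_\sigma)\ge C_0+\sigma-K_0\ge f+\sigma,
\]
and symmetrically $\partial_tw^-_\sigma+\cLgeoLower(x,Dw^-_\sigma,D^2w^-_\sigma)\le f-\sigma$ in $Q_T$. The barrier $w^+_\sigma$ is smooth, so at a touching point from below the test function has the same $x$-derivatives as $w^0$. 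The strict inequality therefore makes $w^+_\sigma$ a classical, hence viscosity, supersolution, including at zero-gradient points, because the supersolution test uses $\cLgeoUpper$, which we bounded. The same reasoning makes $w^-_\sigma$ a subsolution.

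For the boundary ordering we check both parts of the parabolic boundary. At $t=0$ we have $w^+_\sigma=w^0+\sigma>w^0=w$. On $(0,T)\times\partial\Omega$, since $w^0=0$ there, $w^+_\sigma=C_0t+\sigma(1+t)>0=w$; the lower barrier is symmetric. Because $w$ is continuous and attains its data, the semicontinuous orderings $w^*\le(w^+_\sigma)_*$ and $(w^-_\sigma)^*\le w_*$ on $\partial_PQ_T$ hold. Theorem~\ref{thm:comparison} then gives $w^-_\sigma\le w\le w^+_\sigma$. Letting $\sigma\to0$ yields $|w(t,x)-w^0(x)|\le C_0t$. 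If needed, one extends to $t=T$ by continuity or via Remark~\ref{rem:closed-time-interval}.

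The main obstacle is the zero-gradient set of $w^0$. There one must make sure that the relaxed extensions, and not an undefined $\cL$, are what enter the viscosity inequalities, and that they are bounded by $\|D^2w^0\|$. The explicit eigenvalue formulas \eqref{eq:Lsubstar}--\eqref{eq:Lstar} settle this. The only other point needing care is that the argument requires $w^0\in C^2$ up to the boundary, which is assumed, so that $K_0$ is finite.
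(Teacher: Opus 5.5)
Your proposal is correct and follows the paper's proof essentially step for step. The paper uses the same barriers $w^0\pm\bigl(C_0t+\sigma(1+t)\bigr)$, the same bound on $\cLgeoLower,\cLgeoUpper$ along $w^0$ (including zero-gradient points), comparison on $Q_T$, and then lets $\sigma\downarrow0$; its choice $C_0=B_0+M+1$ instead of your $K_0+M$ is immaterial, since $\sigma$ already gives strictness. One small imprecision: where $w^+_\sigma-\varphi$ has a local minimum you only get $D\varphi=Dw^0$ and $D^2\varphi\le D^2w^0$, not equal Hessians, so the step from classical to viscosity supersolution needs the degenerate ellipticity of $\cLgeoUpper$. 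This holds, because $\operatorname{tr}X-\lambda_{\max}(X)$, the sum of the $d-1$ smallest eigenvalues, is nondecreasing in $X$.
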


\begin{proof}
Define
\[
B_0
=
\sup_{x\in\overline\Omega}
\max\left\{
|\cLgeoUpper(x,Dw^0,D^2w^0)|,
|\cLgeoLower(x,Dw^0,D^2w^0)|
\right\}.
\]
The formulas for the limiting extensions imply
\[
B_0
\le
(d+1)g^*\|D^2w^0\|_{L^\infty(\Omega)}
+
\|DG\|_{L^\infty(\Omega)}\|Dw^0\|_{L^\infty(\Omega)}.
\]
Set \(C_0=B_0+M+1\). For \(\sigma>0\), let
\[
\overline b_\sigma(t,x)
=w^0(x)+C_0t+\sigma(1+t),
\qquad
\underline b_\sigma(t,x)
=w^0(x)-C_0t-\sigma(1+t).
\]
At every point, including points where \(Dw^0=0\),
\[
\partial_t\overline b_\sigma
+
\cLgeoUpper(x,D\overline b_\sigma,D^2\overline b_\sigma)-f
\ge\sigma,
\]
\[
\partial_t\underline b_\sigma
+
\cLgeoLower(x,D\underline b_\sigma,D^2\underline b_\sigma)-f
\le-\sigma.
\]
Furthermore,
\[
\underline b_\sigma\le\gamma-\sigma,
\qquad
\gamma+\sigma\le\overline b_\sigma
\quad\text{on }\partial_PQ_T.
\]
Lemma~\ref{lem:strict-relaxed-boundary-comparison} gives
\[
w^0(x)-C_0t-\sigma(1+t)
\le w(t,x)
\le
w^0(x)+C_0t+\sigma(1+t).
\]
Taking \(\sigma=1/n\), \(n\in\mathbb N\), and using the resulting
inequalities for every \(n\) gives
\[
\|w(t,\cdot)-w^0\|_{L^\infty(\Omega)}\le C_0t.
\]
\end{proof}

\begin{proposition}[Perron solvability and continuous attainment of the Dirichlet data]
\label{prop:dirichlet}
Fix $T>0$ and let $f\in C([0,T]\times\overline\Omega)$ satisfy
\[
-M\le f\le0,
\qquad
f=0\quad\text{on }(0,T]\times\partial\Omega.
\]
Let $v^0\in C^2(\overline{\Omega})$ satisfy
\[
0\le v^0\le1\ \text{in }\overline\Omega,\qquad
v^0=0\ \text{on }\partial\Omega,\qquad
v^0(x)=0\ \text{whenever }d_\partial(x)\le2\delta_b.
\]
Then the parabolic Dirichlet problem in
Definition~\ref{def:viscosity} has a unique bounded continuous \emph{Dirichlet}
viscosity solution $v$ with
\[
-Mt\le v(t,x)\le1\qquad ((t,x)\in[0,T]\times\overline{\Omega}).
\]
The solution is unique in the Dirichlet class and is interpreted at the terminal time according to Remark~\ref{rem:closed-time-interval}.
\end{proposition}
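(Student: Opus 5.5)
The plan is to apply Lemma~\ref{lem:perron-boundary-closure} with \(\mathfrak L=\cL\), the geometric extensions \(\cLgeoLower,\cLgeoUpper\), and datum \(\gamma\) built from \(v^0\). Comparison is Theorem~\ref{thm:comparison}. The stability properties under finite maxima and upper semicontinuous regularization of suprema are the standard ones from \cite{CrandallIshiiLions1992,Giga2006}.

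\emph{Global barriers.} I take \(\underline z(t,x)=-Mt-\sigma(1+t)\) and \(\overline z(t,x)=1+\sigma(1+t)\) with a fixed small \(\sigma>0\). The computation in the proof of Lemma~\ref{lem:boundary-distance-control} shows they are strict sub- and supersolutions: their spatial derivatives vanish, so \(\cLgeoLower=\cLgeoUpper=0\) by \eqref{eq:origin-value}. They satisfy \(\underline z<\gamma<\overline z\) on \(\partial_PQ_T\).

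\emph{Local barriers.} For each \(z_b\in\partial_PQ_T\) and \(\rho>0\) I must produce \(D_b\), \(\sigma_{b,\rho}\) and \(\beta^\pm_{b,\rho}\) satisfying (B1)--(B3). There are two cases.
\begin{enumerate}[label=(\roman*),leftmargin=2.6em]
\item \(z_b=(0,x_b)\) with \(x_b\in\Omega\). Take \(D_b=Q_T\), so \(\Gamma_{b,\mathrm{art}}=\varnothing\). Use the functions from Lemma~\ref{lem:geometric-initial-control},
\[
\beta^\pm=v^0\pm\bigl(C_0t+\sigma(1+t)\bigr),
\]
with \(\sigma<\rho\). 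The estimate on \(B_0\) holds also at \(Dv^0=0\) because the zero-gradient extensions are bounded by \((d+1)g^*\|D^2v^0\|\). Then \(\beta^\pm(z)\to v^0(x_b)\pm\sigma\) as \(z\to z_b\), which gives (B3). The in-domain ordering with \(\underline z,\overline z\) in (B2) needs \(\beta^-\le\overline z\) and \(\underline z\le\beta^+\). This follows from \(0\le v^0\le1\), taking \(C_0\ge M\) and choosing the global barrier's \(\sigma\) at most the local one; if necessary I first truncate, replacing \(\beta^\pm\) by \(\min\{\beta^+,\overline z\}\) and \(\max\{\beta^-,\underline z\}\).
\item \(z_b=(t_b,y_b)\) with \(y_b\in\partial\Omega\). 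Take \(D_b=D_\delta\), the collar from Lemma~\ref{lem:boundary-distance-control}, and
\[
\beta^\pm=\pm\bigl(A_{M,T}d_\partial+\sigma(1+t)\bigr).
\]
By \eqref{eq:distance-curvature} and the homogeneity of \(\cK\), these satisfy (B1) with margin \(\sigma\), since \(A\kappa_0\ge M\) and \(-M\le f\le0\). On the data part of \(\partial_PD_\delta\) they are strictly ordered with \(\gamma\): there \(v^0=0\) in the collar, as \(\delta\le\delta_b\), and \(\gamma=0\) on the lateral boundary. On the artificial part \(d_\partial=\delta\), the choice \(A\delta\ge1+MT\) places \(\beta^+\) above \(\overline z\) and \(\beta^-\) below \(\underline z\), provided the \(\sigma\) constants are matched. (B3) then holds with \(\sigma<\rho\), since \(d_\partial\to0\). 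Corner points \((0,y_b)\) are covered by the same collar barriers.
\end{enumerate}

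Lemma~\ref{lem:perron-boundary-closure} then yields a unique continuous Dirichlet solution attaining \(\gamma\). The bound \(-Mt\le v\le1\) follows from Lemma~\ref{lem:boundary-distance-control} applied to \(v\), i.e.\ \eqref{eq:global-v-bounds}, or directly by letting \(\sigma\downarrow0\) in comparison with \(\underline z,\overline z\). The terminal time is handled as in Remark~\ref{rem:closed-time-interval}: solve on \([0,T^+)\) and restrict.

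The main obstacle is the bookkeeping in (B2). The global pair \(\underline z\le\overline z\) and each local pair must be ordered \emph{in} \(D_b\) and on \(\Gamma_{b,\mathrm{art}}\) with the correct inequalities, while every barrier keeps a strict margin. I would fix the global \(\sigma\) small once, and for each \((z_b,\rho)\) use local margins \(\sigma_{b,\rho}\le\min\{\rho,\sigma\}/2\), checking each inequality at \(d_\partial=\delta\) and at \(t=0\). A secondary check is that strict inequalities for smooth test functions with \(D\beta=0\) are tested using \(\cLgeoUpper\) and \(\cLgeoLower\) rather than the undefined \(\cL\); the eigenvalue bounds cover this.
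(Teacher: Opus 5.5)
Your proof is correct. It has the same skeleton as the paper's: verify (B1)--(B3) of Lemma~\ref{lem:perron-boundary-closure}, use Theorem~\ref{thm:comparison}, and solve on $[0,T^+)$ and restrict. The difference is the choice of barriers.

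\textbf{How your barriers differ.} The paper takes $v^0\pm C_0t$ as the global functions and builds \emph{localized} barriers in three cases. At lateral points it adds $B\bigl(|x-y_b|^2+(t-t_b)^2\bigr)$ to $\pm A\,d_\partial$, and this forces the projection-perturbation estimate \eqref{eq:boundary-perturbation}.

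You instead use spatially constant global functions and non-localized local barriers, taken directly from two earlier lemmas:
\begin{itemize}
\item for every initial point, corners included, the barrier of Lemma~\ref{lem:geometric-initial-control} on all of $Q_{T^+}$;
\item for every lateral point at once, the collar barrier of Lemma~\ref{lem:boundary-distance-control} on $D_\delta$.
\end{itemize}
This is legitimate for two reasons. First, the lateral datum is identically zero, so the single function $\pm\bigl(A d_\partial+\sigma(1+t)\bigr)$ lies within $\sigma(1+t_b)$ of $\gamma$ at every lateral point simultaneously, and no localization is needed. Second, $v^0\in C^2(\overline\Omega)$ gives a global initial barrier. Because $D\beta^\pm=\pm A\,Dd_\partial$ and $D^2\beta^\pm=\pm A\,D^2d_\partial$ exactly, \eqref{eq:distance-curvature} applies with no perturbation term. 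Your route is shorter. The paper's localized construction is the one that would survive nonconstant lateral data, but that generality is not needed here.

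\textbf{Bookkeeping to fix.}
\begin{itemize}
\item With $A$ exactly $A_{M,T}$, so $A\delta=1+MT$, and a local margin $\sigma_{b,\rho}<\sigma$, the requirement $\overline z\le\beta^+$ on $\{d_\partial=\delta\}$ reduces to $MT\ge(\sigma-\sigma_{b,\rho})(1+t)$. This fails, for example, when $M=0$, which happens when $\mu=0$. Either enlarge $A$ (only $A\kappa_0\ge M$ is used in (B1)), or take $\sigma=0$ in $\underline z,\overline z$. The latter works because Lemma~\ref{lem:perron-boundary-closure} needs only non-strict global sub- and supersolutions.
\item At a lateral point, (B3) requires $\sigma_{b,\rho}(1+t_b)<\rho$, not merely $\sigma_{b,\rho}<\rho$.
\item Drop the truncations. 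The in-domain orderings hold automatically in both cases, since $\beta^-\le1\le\overline z$ and $\beta^+\ge0\ge\underline z$. Moreover, $\min\{\beta^+,\overline z\}$ and $\max\{\beta^-,\underline z\}$ are not $C^{1,2}$, which (B1)--(B2) require.
\end{itemize}
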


\begin{proof}
Assume that \(v^0(x)=0\) whenever \(d_\partial(x)\le2\delta_b\). Set
\(T^+=T+1\) and replace \(f\) by the continuation \(\mathcal E_Tf\) from
Remark~\ref{rem:closed-time-interval}. Since \(f(T,y)=0\) for every
\(y\in\partial\Omega\),
\[
(\mathcal E_Tf)(t,y)=0\qquad ((t,y)\in(0,T^+]\times\partial\Omega).
\]
We construct the solution on \([0,T^+)\times\overline\Omega\) and subsequently
restrict it to \([0,T]\times\overline\Omega\).

Define
\[
B_0:=\sup_{x\in\overline\Omega}\max\!\left\{
\left|\cLgeoLower(x,Dv^0,D^2v^0)\right|,
\left|\cLgeoUpper(x,Dv^0,D^2v^0)\right|\right\},
\qquad C_0:=B_0+M+1.
\]
The formulas for \(\cLgeoLower\) and \(\cLgeoUpper\) imply \(B_0<\infty\). In the
fixed-point argument, \(M=M_\eta:=\mu\|\Lambda\|_{L^\infty(\Omega)}<\infty\) is fixed before the
present proposition is applied. Set
\[
\underline z(t,x):=v^0(x)-C_0t,\qquad
\overline z(t,x):=v^0(x)+C_0t.
\]
For every \((t,x)\in Q_{T^+}\), including points at which \(Dv^0(x)=0\),
\[
\partial_t\underline z+\cLgeoLower(x,D\underline z,D^2\underline z)-f
\le-C_0+B_0+M=-1,
\qquad
\partial_t\overline z+\cLgeoUpper(x,D\overline z,D^2\overline z)-f
\ge C_0-B_0\ge1.
\]
Moreover, \(\underline z\le\gamma\le\overline z\) on
\(\partial_PQ_{T^+}\). Hence \(\underline z\) and \(\overline z\) are the
global sub- and supersolutions used in Lemma~\ref{lem:perron-boundary-closure}.

We verify conditions \textup{(B1)}--\textup{(B3)} of that lemma.

\noindent\emph{Case 1: \(z_b=(0,x_b)\) with \(x_b\in\Omega\).}
Choose \(r_b>0\) such that
\[
0<r_b<T^+,
\qquad \overline{B_{r_b}(x_b)}\subset\Omega,
\qquad D_{b,r_b}:=(0,r_b)\times B_{r_b}(x_b).
\]
Write
\[
\Gamma_{b,\mathrm{data}}:=\{0\}\times\overline{B_{r_b}(x_b)},
\qquad
\Gamma_{b,\mathrm{art}}:=\partial_PD_{b,r_b}\setminus
\Gamma_{b,\mathrm{data}}.
\]
Fix \(B_{b,\rho}>0\) and set
\[
\psi^-_{b,\rho}(x):=v^0(x)-B_{b,\rho}|x-x_b|^2,
\qquad
\psi^+_{b,\rho}(x):=v^0(x)+B_{b,\rho}|x-x_b|^2.
\]
Define
\begin{align*}
M_{b,\rho}
&:=\sup_{x\in\overline{B_{r_b}(x_b)}}
\max\!\left\{
\left|\cLgeoLower(x,D\psi^-_{b,\rho},D^2\psi^-_{b,\rho})\right|,
\left|\cLgeoUpper(x,D\psi^+_{b,\rho},D^2\psi^+_{b,\rho})\right|\right\},\\
C_{b,\rho}&:=\max\{C_0,M+M_{b,\rho}+1\}.
\end{align*}
Then \(M_{b,\rho}<\infty\). For
\[
\beta^-_{b,\rho}(t,x):=\psi^-_{b,\rho}(x)-C_{b,\rho}t,
\qquad
\beta^+_{b,\rho}(t,x):=\psi^+_{b,\rho}(x)+C_{b,\rho}t,
\]
one has in \(D_{b,r_b}\)
\[
\partial_t\beta^-_{b,\rho}-\cK(x,D\beta^-_{b,\rho},D^2\beta^-_{b,\rho})\le f-1,
\qquad
\partial_t\beta^+_{b,\rho}-\cK(x,D\beta^+_{b,\rho},D^2\beta^+_{b,\rho})\ge f+1.
\]
Furthermore,
\[
\beta^-_{b,\rho}-\underline z
=-B_{b,\rho}|x-x_b|^2-(C_{b,\rho}-C_0)t\le0,
\qquad
\beta^+_{b,\rho}-\overline z
=B_{b,\rho}|x-x_b|^2+(C_{b,\rho}-C_0)t\ge0
\]
in \(D_{b,r_b}\), and
\[
\beta^-_{b,\rho}(0,x)\le v^0(x)\le\beta^+_{b,\rho}(0,x)
\qquad (x\in\overline{B_{r_b}(x_b)}).
\]

\noindent\emph{Case 2: \(z_b=(t_b,y_b)\) with
\(t_b\in(0,T^+)\) and \(y_b\in\partial\Omega\).}
Let \(\delta\in(0,\delta_b]\) and \(\kappa_0>0\) be furnished by
Lemma~\ref{lem:boundary-distance-control}. Choose
\[
0<r_b<\min\!\left\{\delta,\delta_b,\frac{t_b}{2},T^+-t_b\right\}
\]
so that, with
\[
D_{b,r_b}:=\{(t,x)\in Q_{T^+}:|t-t_b|<r_b,\ |x-y_b|<r_b\},
\]
one has
\[
D_{b,r_b}\subset(t_b-r_b,t_b+r_b)\times
\{x\in\Omega:0<d_\partial(x)<\delta\},
\qquad
\overline{D_{b,r_b}}\cap(\{0\}\times\overline\Omega)=\varnothing.
\]
Set
\[
\Gamma_{b,\mathrm{data}}:=\partial_PD_{b,r_b}\cap
((0,T^+)\times\partial\Omega),
\qquad
\Gamma_{b,\mathrm{art}}:=\partial_PD_{b,r_b}\setminus
\Gamma_{b,\mathrm{data}}.
\]
The datum equals zero on \(\Gamma_{b,\mathrm{data}}\), and
\(d_\partial(x)\le|x-y_b|\) for every \(x\in\Omega\). Choose
\begin{equation}\label{eq:boundary-B-choice}
B_{b,\rho}>0,\qquad B_{b,\rho}r_b^2\ge C_0T^++1.
\end{equation}
For \(A>0\), define
\[
\beta^\pm(t,x):=\pm A d_\partial(x)
\pm B_{b,\rho}\bigl(|x-y_b|^2+(t-t_b)^2\bigr),
\qquad P(p):=I-\widehat p\otimes\widehat p.
\]
If \(|p\mp ADd_\partial|\le A/2\), then
\[
\|P(p)-P(\pm ADd_\partial)\|\le\frac4A|p\mp ADd_\partial|.
\]
Because \(DG=0\) on \(\{0<d_\partial<\delta_b\}\) and
\(D^2d_\partial\) is bounded there, constants \(C_1,C_2>0\), independent
of \(A,B_{b,\rho},r_b\), satisfy
\begin{equation}
\label{eq:boundary-perturbation}
\left|\cK(x,D\beta^\pm,D^2\beta^\pm)
-\cK(x,\pm ADd_\partial,\pm AD^2d_\partial)\right|
\le C_1B_{b,\rho}+C_2B_{b,\rho}r_b
\end{equation}
whenever \(2B_{b,\rho}r_b\le A/2\). Choose \(A_{b,\rho}>0\) such that
\begin{equation}
\label{eq:boundary-A-choice}
A_{b,\rho}\ge4B_{b,\rho}r_b,
\qquad
A_{b,\rho}\kappa_0\ge
M+1+C_1B_{b,\rho}+(C_2+2)B_{b,\rho}r_b.
\end{equation}
Set
\[
\beta^\pm_{b,\rho}(t,x):=\pm A_{b,\rho}d_\partial(x)
\pm B_{b,\rho}\bigl(|x-y_b|^2+(t-t_b)^2\bigr).
\]
The homogeneity and oddness of \(\cK\), together with
\eqref{eq:distance-curvature}, give
\[
\cK(x,A_{b,\rho}Dd_\partial,A_{b,\rho}D^2d_\partial)
\le-A_{b,\rho}\kappa_0,
\qquad
\cK(x,-A_{b,\rho}Dd_\partial,-A_{b,\rho}D^2d_\partial)
\ge A_{b,\rho}\kappa_0.
\]
Consequently, \eqref{eq:boundary-perturbation}--\eqref{eq:boundary-A-choice}
and \(|\partial_t\beta^\pm_{b,\rho}|\le2B_{b,\rho}r_b\) imply
\[
\partial_t\beta^+_{b,\rho}-\cK(x,D\beta^+_{b,\rho},D^2\beta^+_{b,\rho})
\ge1\ge f+1,
\qquad
\partial_t\beta^-_{b,\rho}-\cK(x,D\beta^-_{b,\rho},D^2\beta^-_{b,\rho})
\le-M-1\le f-1.
\]
Moreover,
\[
|D\beta^\pm_{b,\rho}|\ge A_{b,\rho}-2B_{b,\rho}r_b
\ge\frac{A_{b,\rho}}2>0,
\qquad
\beta^-_{b,\rho}\le0\le\beta^+_{b,\rho}
\quad\text{on }\Gamma_{b,\mathrm{data}}.
\]
On \(\Gamma_{b,\mathrm{art}}\), the definition of \(B_{b,\rho}\) gives
\[
\beta^-_{b,\rho}\le-C_0T^+\le\underline z,
\qquad
\overline z\le1+C_0T^+\le\beta^+_{b,\rho}.
\]
Since \(d_\partial(x)<\delta\le\delta_b\) in \(D_{b,r_b}\), the assumption
on \(v^0\) gives \(v^0=0\) there, and hence
\[
\beta^-_{b,\rho}\le0\le\overline z,
\qquad
\underline z\le0\le\beta^+_{b,\rho}
\quad\text{in }D_{b,r_b}.
\]

\noindent\emph{Case 3: \(z_b=(0,y_b)\) with \(y_b\in\partial\Omega\).}
Choose \(r_b\in(0,\min\{\delta_b,T^+\})\) and set
\[
D_{b,r_b}:=(0,r_b)\times(\Omega\cap B_{r_b}(y_b)),
\]
\[
\Gamma_{b,\mathrm{data}}:=\partial_PD_{b,r_b}\cap
\left[(\{0\}\times\overline\Omega)\cup((0,T^+)\times\partial\Omega)\right],
\qquad
\Gamma_{b,\mathrm{art}}:=\partial_PD_{b,r_b}\setminus
\Gamma_{b,\mathrm{data}}.
\]
Fix \(B_{b,\rho}>0\), define
\[
\psi^-_{b,\rho}(x):=v^0(x)-B_{b,\rho}|x-y_b|^2,
\qquad
\psi^+_{b,\rho}(x):=v^0(x)+B_{b,\rho}|x-y_b|^2,
\]
and let
\[
M_{b,\rho}:=\sup_{x\in\overline\Omega\cap\overline{B_{r_b}(y_b)}}
\max\!\left\{
\left|\cLgeoLower(x,D\psi^-_{b,\rho},D^2\psi^-_{b,\rho})\right|,
\left|\cLgeoUpper(x,D\psi^+_{b,\rho},D^2\psi^+_{b,\rho})\right|\right\}.
\]
Set \(C_{b,\rho}:=\max\{C_0,M+M_{b,\rho}+1\}\).
For
\[
\beta^-_{b,\rho}(t,x):=v^0(x)-B_{b,\rho}|x-y_b|^2-C_{b,\rho}t,
\qquad
\beta^+_{b,\rho}(t,x):=v^0(x)+B_{b,\rho}|x-y_b|^2+C_{b,\rho}t,
\]
one has
\[
\partial_t\beta^-_{b,\rho}-\cK(x,D\beta^-_{b,\rho},D^2\beta^-_{b,\rho})\le f-1,
\qquad
\partial_t\beta^+_{b,\rho}-\cK(x,D\beta^+_{b,\rho},D^2\beta^+_{b,\rho})\ge f+1.
\]
On the two components of \(\Gamma_{b,\mathrm{data}}\), respectively,
\[
\beta^-_{b,\rho}(0,x)\le v^0(x)\le\beta^+_{b,\rho}(0,x),
\qquad
\beta^-_{b,\rho}(t,x)\le0\le\beta^+_{b,\rho}(t,x).
\]
Equivalently, \(\beta^-_{b,\rho}\le\gamma\le\beta^+_{b,\rho}\) on
\(\Gamma_{b,\mathrm{data}}\). In addition,
\[
\beta^-_{b,\rho}-\underline z
=-B_{b,\rho}|x-y_b|^2-(C_{b,\rho}-C_0)t\le0,
\qquad
\beta^+_{b,\rho}-\overline z
=B_{b,\rho}|x-y_b|^2+(C_{b,\rho}-C_0)t\ge0
\]
in \(D_{b,r_b}\), and therefore on \(\Gamma_{b,\mathrm{art}}\).

Fix \(\rho>0\). For each \(z_b\in\partial_PQ_{T^+}\), choose the functions
constructed in the corresponding case with the parameter \(\rho/2\), select
\[
0<\sigma_{b,\rho}<\min\{\rho/2,1\},
\qquad
\widetilde\beta^-_{b,\rho}:=\beta^-_{b,\rho/2}-\sigma_{b,\rho},
\qquad
\widetilde\beta^+_{b,\rho}:=\beta^+_{b,\rho/2}+\sigma_{b,\rho}.
\]
Since \(\cK\) is independent of the function value, the differential
inequalities remain unchanged. The shifts also preserve the inequalities on
\(\Gamma_{b,\mathrm{art}}\) and give
\[
\widetilde\beta^-_{b,\rho}\le\gamma-\sigma_{b,\rho},
\qquad
\gamma+\sigma_{b,\rho}\le\widetilde\beta^+_{b,\rho}
\quad\text{on }\Gamma_{b,\mathrm{data}}.
\]
For \(z_b=(0,y_b)\) with \(y_b\in\partial\Omega\), these inequalities are,
more explicitly,
\[
\widetilde\beta^-_{b,\rho}(0,x)
\le v^0(x)-\sigma_{b,\rho},
\qquad
v^0(x)+\sigma_{b,\rho}
\le\widetilde\beta^+_{b,\rho}(0,x),
\]
on \(\partial_PD_{b,r_b}\cap(\{0\}\times\overline\Omega)\), and
\[
\widetilde\beta^-_{b,\rho}(t,x)\le-\sigma_{b,\rho},
\qquad
\sigma_{b,\rho}\le\widetilde\beta^+_{b,\rho}(t,x)
\]
on \(\partial_PD_{b,r_b}\cap((0,T^+)\times\partial\Omega)\). Finally,
continuity and the identities \(\beta^-_{b,\rho/2}(z_b)
=\beta^+_{b,\rho/2}(z_b)=\gamma(z_b)\) give
\[
\lim_{Q_{T^+}\ni z\to z_b}\widetilde\beta^-_{b,\rho}(z)
=\gamma(z_b)-\sigma_{b,\rho}>\gamma(z_b)-\rho,
\qquad
\lim_{Q_{T^+}\ni z\to z_b}\widetilde\beta^+_{b,\rho}(z)
=\gamma(z_b)+\sigma_{b,\rho}<\gamma(z_b)+\rho.
\]
Thus \textup{(B1)}--\textup{(B3)} hold.

Lemma~\ref{lem:perron-boundary-closure} yields a bounded continuous Dirichlet
solution \(\widehat v\) on \([0,T^+)\times\overline\Omega\). Theorem~\ref{thm:comparison}
implies uniqueness on \(Q_S\) for every \(S<T^+\). Define
\[
v:=\widehat v\big|_{[0,T]\times\overline\Omega}.
\]
Then \(v\in C([0,T]\times\overline\Omega)\), and Theorem~\ref{thm:comparison}
on \(Q_T\) shows that \(v\) is independent of the chosen continuation of
\(f\). Comparison with \(-Mt\) and \(1\) gives
\[
-Mt\le v(t,x)\le1\qquad ((t,x)\in[0,T]\times\overline\Omega).
\]
\end{proof}

\begin{lemma}[Stability under prescribed inhomogeneities]\label{lem:inhomogeneous-stability}
Let $f,g\in C([0,T]\times\overline{\Omega})$ vanish on
$(0,T]\times\partial\Omega$ and satisfy $-M\le f,g\le0$.  Let $v_f$ and $v_g$ be the associated Dirichlet viscosity solutions
with initial data $v_f^0$ and $v_g^0$, where both initial data belong to the same
class of initial data satisfying $v^0=0$ on $\{d_\partial\le2\delta_b\}$ used in Proposition~\ref{prop:dirichlet}.  Then
\begin{equation}\label{eq:inhomogeneous-stability}
\norm{v_f(t)-v_g(t)}_{L^\infty}
\le \norm{v_f^0-v_g^0}_{L^\infty}
+\int_0^t\norm{f(s)-g(s)}_{L^\infty}\,\dd s.
\end{equation}
\end{lemma}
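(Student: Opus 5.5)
We compare $v_f$ with a shifted copy of $v_g$ by a spatially constant, time-dependent barrier. The shift exploits two facts: the geometric operator $\cL$ is independent of the solution value, and adding a function of $t$ alone leaves $D_x$ and $D_x^2$ of test functions unchanged. Set
\[
\Phi(t):=\norm{v_f^0-v_g^0}_{L^\infty}+\int_0^t\norm{f(s)-g(s)}_{L^\infty}\,\dd s,
\qquad
w^\pm(t,x):=v_g(t,x)\pm\Phi(t).
\]
The function $\Phi$ is $C^1$ on $[0,T]$, because $s\mapsto\norm{f(s)-g(s)}_{L^\infty}$ is continuous by uniform continuity of $f-g$ on the compact cylinder. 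Hence $\Phi'(t)=\norm{f(t)-g(t)}_{L^\infty}\ge f(t,x)-g(t,x)$ for every $x$.

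First I show that $w^+$ is a viscosity supersolution of $\partial_t z-\cK(x,Dz,D^2z)=f$ in $Q_T$, with the geometric extension $\cLgeoUpper$ at zero gradient. Suppose $w^+-\varphi$ has a local minimum at $(t_0,x_0)$. Then $v_g-(\varphi-\Phi)$ has a local minimum there, and the test function $\varphi-\Phi$ has the same spatial derivatives as $\varphi$ and time derivative $\partial_t\varphi-\Phi'$. The supersolution inequality for $v_g$ gives
\[
\partial_t\varphi-\Phi'+\cLgeoUpper(x_0,D\varphi,D^2\varphi)\ge g .
\]
Combining this with $\Phi'\ge f-g$ yields $\partial_t\varphi+\cLgeoUpper(x_0,D\varphi,D^2\varphi)\ge f$. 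The argument for $w^-$ as a subsolution is symmetric, using $\cLgeoLower$ and $-\Phi'\le f-g$. Differentiability of $\Phi$ is used here; if one prefers to avoid it, one can replace $\Phi$ by $\Phi+\sigma(1+t)$, which only adds strictness.

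Next I check the boundary ordering required by Theorem~\ref{thm:comparison}, applied with inhomogeneity $f$. Both $v_f$ and $v_g$ are continuous and attain their data by Proposition~\ref{prop:dirichlet} and Lemma~\ref{lem:boundary-distance-control}, so no semicontinuous envelopes arise. At $t=0$ we have $v_f^0\le v_g^0+\Phi(0)$. On $(0,T)\times\partial\Omega$ we have $v_f=0\le\Phi(t)=w^+$. Theorem~\ref{thm:comparison} then gives $v_f\le w^+$ in $Q_T$. Similarly $w^-\le v_f$, since $w^-\le0=v_f$ on the lateral boundary and $w^-(0)\le v_f^0$. Continuity extends both inequalities to $t=T$ and to $\overline\Omega$, which gives \eqref{eq:inhomogeneous-stability}.

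The main obstacle is the first step: checking that the constant shift is admissible in the relaxed Dirichlet sense at parabolic boundary points, not only in the interior. For this reason I apply comparison in the form of Theorem~\ref{thm:comparison}, whose hypothesis is only the pointwise ordering $v^*\le w_*$ on $\partial_PQ_T$. That ordering was verified directly above from classical attainment of the data, so interior viscosity inequalities suffice.
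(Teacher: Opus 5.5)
Your proposal is correct and matches the paper's proof: the same spatially constant barrier $\Phi(t)=\norm{v_f^0-v_g^0}_{L^\infty}+\int_0^t\norm{f(s)-g(s)}_{L^\infty}\dd s$, the same check that $v_g\pm\Phi$ are a supersolution and a subsolution of the $f$-equation (the shift does not change spatial derivatives, and $\abs{f-g}\le\Phi'$), and the same boundary ordering fed into Theorem~\ref{thm:comparison} for the pairs $(v_g-\Phi,v_f)$ and $(v_f,v_g+\Phi)$. Your side remark that adding $\sigma(1+t)$ would avoid differentiability of $\Phi$ does not actually do so, but this is harmless because $\Phi\in C^1([0,T])$.
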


\begin{proof}
Define
\[
a(t)=\|v_f^0-v_g^0\|_{L^\infty(\Omega)}
+
\int_0^t\|f(s)-g(s)\|_{L^\infty(\Omega)}\,\mathrm ds.
\]
The map
\(t\mapsto\|f(t)-g(t)\|_{L^\infty(\Omega)}\) is continuous; hence
\(a\in C^1([0,T])\) and
\[
a'(t)=\|f(t)-g(t)\|_{L^\infty(\Omega)},
\qquad -a'(t)\le f(t,x)-g(t,x)\le a'(t).
\]
Because \(a\) is independent of \(x\),
\[
D_x(v_g\pm a)=D_xv_g,
\qquad D_x^2(v_g\pm a)=D_x^2v_g.
\]
If \(v_g+a-\varphi\) has a local minimum at \((t_0,x_0)\), then
\(v_g-(\varphi-a)\) has a local minimum there. The supersolution inequality
for \(v_g\) gives
\[
\partial_t\varphi(t_0,x_0)
+\cLgeoUpper(x_0,D_x\varphi,D_x^2\varphi)-f(t_0,x_0)
\ge a'(t_0)+g(t_0,x_0)-f(t_0,x_0)\ge0.
\]
Thus \(v_g+a\) is a supersolution of the equation with inhomogeneity
\(f\). The corresponding local-maximum calculation gives
\[
\partial_t\varphi(t_0,x_0)
+\cLgeoLower(x_0,D_x\varphi,D_x^2\varphi)-f(t_0,x_0)
\le g(t_0,x_0)-f(t_0,x_0)-a'(t_0)\le0,
\]
so \(v_g-a\) is a subsolution.

On $(0,T]\times\partial\Omega$,
\[
(v_g-a)^*=-a\le0=(v_f)_*,
\qquad
(v_f)^*=0\le a=(v_g+a)_*.
\]
At \(t=0\), the definition of \(a(0)\) gives
\[
v_g^0-a(0)\le v_f^0\le v_g^0+a(0).
\]
Theorem~\ref{thm:comparison} applied to the pairs
\((v_g-a,v_f)\) and \((v_f,v_g+a)\) yields
\[
v_g-a\le v_f\le v_g+a
\qquad\text{in }Q_T.
\]
Taking the \(L^\infty(\Omega)\)-norm at time \(t\) gives
\eqref{eq:inhomogeneous-stability}.
\end{proof}

\section{Vanishing-regularization analysis}\label{sec:limit-analysis}
Recall the regularized nondivergence-form operator defined in \eqref{eq:regularized-operator}.
For every smooth field $\psi$, the divergence-form principal part and
\eqref{eq:regularized-operator} agree identically:
\begin{align}
&-\nu\Delta\psi
-A_\eps(D\psi)\operatorname{div}\!\left(
G\frac{D\psi}{A_\eps(D\psi)}\right)\notag\\
&\qquad =
-\nu\operatorname{tr}(D^2\psi)
-G\left(\operatorname{tr}(D^2\psi)
-\frac{D^2\psi\,D\psi\cdot D\psi}{\eps^2+|D\psi|^2}\right)
-\nabla G\cdot D\psi\notag\\
&\qquad =\cL^{\eps,\nu}(x,D\psi,D^2\psi).
\label{eq:divergence-nondivergence}
\end{align}
Accordingly, throughout this paper a regularized solution means a viscosity
solution of the nondivergence-form equation
$\partial_t w+\cL^{\eps,\nu}(x,Dw,D^2w)=f$.
For every $\psi\in C^2(\overline\Omega)$, identity~\eqref{eq:divergence-nondivergence} shows that the nondivergence-form operator evaluated at $(D\psi,D^2\psi)$ equals the displayed divergence expression.  The analysis proceeds in the
nondivergence-form viscosity framework.  At a nonzero gradient,
\eqref{eq:regularized-operator} converges locally uniformly to $\cL$.  At zero gradient the
direct upper and lower limiting values are different from the geometric upper and lower limiting extensions in
\eqref{eq:Lsubstar}--\eqref{eq:Lstar}.

\begin{lemma}[Uniform operator convergence away from zero gradient]
\label{lem:away-zero-gradient-convergence}
For every $\rho,B>0$ and $\eps,\nu>0$,
\[
 \sup_{\substack{x\in\overline\Omega,\ |p|\ge\rho,\\ \|X\|\le B}}
 \left|\cL^{\eps,\nu}(x,p,X)-\cL(x,p,X)\right|
 \le B\left(d\nu+g^*\frac{\eps^2}{\rho^2}\right).
\]
Consequently, the convergence is uniform on every compact subset of
$\{(x,p,X):p\ne0\}$.
\end{lemma}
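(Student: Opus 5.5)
We compute the difference directly. Fix $x\in\overline\Omega$, $p$ with $|p|\ge\rho$, and $X$ with $\|X\|\le B$. Subtract \eqref{eq:K} (recall $\cL=-\cK$) from \eqref{eq:regularized-operator}. The first-order terms $-\nabla G(x)\cdot p$ cancel, and the term $-G(x)\tr X$ appears in both. This leaves
\[
\cL^{\eps,\nu}(x,p,X)-\cL(x,p,X)
=-\nu\tr X+G(x)\,Xp\cdot p\left(\frac{1}{\eps^2+|p|^2}-\frac{1}{|p|^2}\right).
\]
We then estimate the two terms separately.

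For the viscous term, $|\tr X|\le d\|X\|\le dB$, so $|\nu\tr X|\le d\nu B$. For the curvature correction, use $|Xp\cdot p|\le\|X\||p|^2$ and the identity
\[
\frac{1}{|p|^2}-\frac{1}{\eps^2+|p|^2}=\frac{\eps^2}{|p|^2(\eps^2+|p|^2)}.
\]
Together with $0<G\le g^*$ from (A2), this gives the bound
\[
g^*B\,\frac{\eps^2}{\eps^2+|p|^2}\le g^*B\frac{\eps^2}{|p|^2}\le g^*B\frac{\eps^2}{\rho^2}.
\]
Adding the two bounds and taking the supremum over the admissible set yields the displayed estimate. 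The right-hand side does not depend on $x$, so the supremum over $\overline\Omega$ needs nothing further.

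For the consequence, let $\mathcal C\subset\{(x,p,X):p\ne0\}$ be compact. Compactness and continuity of $(x,p,X)\mapsto|p|$ give $\rho:=\min_{\mathcal C}|p|>0$, and boundedness gives $B:=\max_{\mathcal C}\|X\|<\infty$. The estimate then shows
\[
\sup_{\mathcal C}|\cL^{\eps,\nu}-\cL|\le B\left(d\nu+g^*\eps^2/\rho^2\right)\to0
\qquad\text{as }(\eps,\nu)\to(0,0),
\]
with no restriction on the relative rates of $\eps$ and $\nu$.

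No step is a real obstacle. The only points needing care are that the $\nabla G\cdot p$ terms cancel exactly, and that the bound $\eps^2/(\eps^2+|p|^2)\le\eps^2/\rho^2$ uses the lower bound $|p|\ge\rho$. This is why the estimate degenerates as $\rho\to0$, which is consistent with the separate zero-gradient analysis in Proposition~\ref{prop:direct-zero-gradient-limits}.
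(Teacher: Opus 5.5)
Your proof is correct and matches the paper's argument. The drift terms cancel, the viscous term is bounded by $d\nu\|X\|$, and the curvature correction is bounded by $g^*\|X\|\,\eps^2/(\eps^2+|p|^2)\le g^*\|X\|\,\eps^2/|p|^2$. Your compactness step for the consequence (taking $\rho=\min|p|>0$ and $B=\max\|X\|$) simply spells out what the paper leaves implicit.
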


\begin{proof}
The drift term is identical in the two operators.  For $p\ne0$, their difference is bounded by
\[
 \nu|\tr X|+G(x)\|X\|\,
 \left|\frac{|p|^2}{\eps^2+|p|^2}-1\right|
 \le d\nu\|X\|+g^*\|X\|\frac{\eps^2}{|p|^2},
\]
which gives the assertion.
\end{proof}

\begin{lemma}[Explicit fixed-regularization Perron functions]
\label{lem:regularized-local-perron-functions}
Fix $\eps,\nu>0$, a terminal time $S>0$, and data $w^0,f$ in the class of
Proposition~\ref{prop:regularized-dirichlet}.  Put
\[
 B_0^{\eps,\nu}:=
 \norm{\cL^{\eps,\nu}(x,Dw^0,D^2w^0)}_{L^\infty(\Omega)},
 \qquad C_0^{\eps,\nu}:=B_0^{\eps,\nu}+M+2.
\]
Then
\[
 \underline z=w^0-C_0^{\eps,\nu}t,
 \qquad
 \overline z=w^0+C_0^{\eps,\nu}t
\]
are global strict sub- and supersolutions.  Moreover, for every parabolic-boundary point
$z_b$ and every $\rho>0$, there are a relative neighborhood $D_b$ and functions
$\beta^-_{b,\rho},\beta^+_{b,\rho}\in C^{1,2}(\overline{D_b})$ satisfying
\textup{(B1)}--\textup{(B3)} of Lemma~\ref{lem:perron-boundary-closure} for the fixed
regularized equation, with $\underline z,\overline z$ as global lower and upper functions.
\end{lemma}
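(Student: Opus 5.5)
The plan is to mirror the proof of Proposition~\ref{prop:dirichlet}, replacing the geometric extensions \(\cLgeoLower,\cLgeoUpper\) by the continuous operator \(\cL^{\eps,\nu}\). Since \(\eps,\nu>0\) are fixed, we have \(\mathfrak L_*=\mathfrak L^*=\cL^{\eps,\nu}\), and there is no zero-gradient issue.

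\emph{Global functions.} Since \(w^0\in C^2(\overline\Omega)\), the quantity \(B_0^{\eps,\nu}\) is finite, with
\[
B_0^{\eps,\nu}\le (d\nu+(d+1)g^*)\|D^2w^0\|_\infty+\|DG\|_\infty\|Dw^0\|_\infty .
\]
Using \(|f|\le M\), a direct computation gives
\[
\partial_t\underline z+\cL^{\eps,\nu}(x,Dw^0,D^2w^0)-f\le -2,
\qquad
\partial_t\overline z+\cL^{\eps,\nu}(x,Dw^0,D^2w^0)-f\ge 2 .
\]
Since \(w^0=0\) on \(\partial\Omega\), we also have \(\underline z\le\gamma\le\overline z\) on \(\partial_PQ_S\). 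So these are strict global sub- and supersolutions.

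\emph{Local barriers, interior initial points and boundary corners.} These two situations are Cases 1 and 3 of Proposition~\ref{prop:dirichlet}. At \(z_b=(0,x_b)\) we take
\[
\beta^\pm=w^0\pm B|x-x_b|^2\pm C_{b}t,
\]
with \(C_b\ge C_0^{\eps,\nu}\) dominating \(M+1\) plus the sup of \(|\cL^{\eps,\nu}(x,D\psi^\pm,D^2\psi^\pm)|\) over the local ball. This sup is finite because \(\cL^{\eps,\nu}\) is continuous. Then \(\beta^-\le\underline z\) and \(\beta^+\ge\overline z\) hold automatically in \(D_b\), which gives (B2) on \(\Gamma_{\mathrm{art}}\) and the interior orderings. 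Shifting by \(\mp\sigma_{b,\rho}\), with \(\sigma_{b,\rho}<\min\{\rho/2,1\}\), yields the strict data inequalities. It also yields (B3), because \(\beta^\pm(z_b)=\gamma(z_b)\).

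\emph{Lateral boundary points, the main obstacle.} At \(z_b=(t_b,y_b)\) with \(t_b>0\) and \(y_b\in\partial\Omega\), we use
\[
\beta^\pm=\pm A\,d_\partial\pm B\bigl(|x-y_b|^2+(t-t_b)^2\bigr)\pm\sigma .
\]
Here \(Br_b^2\ge C_0^{\eps,\nu}S+1\) handles the artificial boundary. The difficulty is that \(\cL^{\eps,\nu}\) is not geometric: the \(\nu\Delta\) term and the \(\eps\)-defect do not scale away, so \(\kappa_0\) from Lemma~\ref{lem:boundary-distance-control} cannot be used purely by homogeneity.

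I would compute directly for \(p=\pm ADd_\partial+O(Br_b)\) and \(X=\pm AD^2d_\partial\pm 2BI\). Using \(DG=0\) near \(\partial\Omega\), \(D^2d_\partial\,Dd_\partial=0\) and \(\operatorname{tr}D^2d_\partial=-H_s\le -H_0/2\), the \(D^2d_\partial\) part of the quadratic-form correction is only \(O(Br_b\|D^2d_\partial\|)\). This gives
\[
\pm\cL^{\eps,\nu}(x,D\beta^\pm,D^2\beta^\pm)\ge A(\nu+G)\tfrac{H_0}{2}-C(1+\nu)B-CBr_b .
\]
The key point is that the \(\nu\Delta d_\partial\) term has the favorable sign, because the domain is mean-convex. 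The leading term is therefore at least \(Ag_*H_0/2\), uniformly in \(\eps\), and choosing \(A\) large makes both strict inequalities in (B1) hold with margin \(1\).

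On \(\Gamma_{\mathrm{data}}\) we have \(d_\partial=0\), so \(\beta^-\le-\sigma\) and \(\beta^+\ge\sigma\). In \(D_b\), \(w^0=0\) because \(d_\partial<\delta_b\), which gives the remaining orderings of (B2). Finally, (B3) follows from \(\beta^\pm(z_b)=\pm\sigma\). All three cases are then fed into Lemma~\ref{lem:perron-boundary-closure}, with comparison supplied by Lemma~\ref{lem:fixed-parameter-structural-comparison}.
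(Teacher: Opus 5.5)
Your proposal is correct and follows essentially the same route as the paper: the global barriers $w^0\pm C_0^{\eps,\nu}t$, the quadratic barriers $w^0\pm B|x-x_b|^2\pm Ct$ at initial points, the barriers $\pm\bigl(Ad_\partial+B(|x-y_b|^2+(t-t_b)^2)\bigr)$ at lateral points with $Br^2\ge C_0^{\eps,\nu}S+1$ and the favorable sign $A(\nu+G)H_{d_\partial}\ge A\kappa_0$ from mean convexity, and finally the constant shifts by $\sigma$. The only cosmetic difference is that you bound the $\eps$-dependent quadratic term by using $D^2d_\partial\,Dd_\partial=0$ directly in $Xp\cdot p$, whereas the paper uses a Lipschitz estimate for $\mathcal Q_\eps$ near $\pm ADd_\partial$; both give an $O(Br)$ error uniform in $\eps$ once $A\ge4Br$.
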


\begin{proof}
The matrix quotient in \eqref{eq:regularized-operator} is continuous for fixed $\eps>0$;
hence $B_0^{\eps,\nu}<\infty$.  Since $-M\le f\le0$, the definition of $C_0^{\eps,\nu}$ gives
\[
 \partial_t\underline z+\cL^{\eps,\nu}(x,D\underline z,D^2\underline z)
 \le f-1,
 \qquad
 \partial_t\overline z+\cL^{\eps,\nu}(x,D\overline z,D^2\overline z)
 \ge f+1.
\]
They satisfy
\[
 \underline z(0,x)=w^0(x)=\overline z(0,x)\quad (x\in\overline\Omega),
 \qquad
 \underline z(t,y)=-C_0^{\eps,\nu}t\le0\le C_0^{\eps,\nu}t=\overline z(t,y)
 \quad ((t,y)\in(0,S]\times\partial\Omega).
\]
Since \(w^0(y)=0\) for every \(y\in\partial\Omega\), the two identities agree on
\(\{0\}\times\partial\Omega\).

At an initial point $z_b=(0,x_b)$, choose the bounded relative parabolic domain
$D_b=(0,r)\times(\Omega\cap B_r(x_b))$ and, after fixing $B>0$,
set
\[
 \psi^-_B=w^0-B|x-x_b|^2,
 \qquad
 \psi^+_B=w^0+B|x-x_b|^2.
\]
Set
\[
M_B^{\eps,\nu}
=
\max_{\pm}
\left\|\cL^{\eps,\nu}(x,D\psi_B^\pm,D^2\psi_B^\pm)\right\|_{L^\infty}.
\]
Choose
\[
C\ge\max\{C_0^{\eps,\nu},M_B^{\eps,\nu}+M+1\}.
\]
Then the functions
\[
 \beta^-_B(t,x)=\psi^-_B(x)-Ct,
 \qquad
 \beta^+_B(t,x)=\psi^+_B(x)+Ct
\]
satisfy
\[
\partial_t\beta^-_B
+\cL^{\eps,\nu}(x,D\beta^-_B,D^2\beta^-_B)
\le f-1,
\]
\[
\partial_t\beta^+_B
+\cL^{\eps,\nu}(x,D\beta^+_B,D^2\beta^+_B)
\ge f+1.
\]
Moreover,
\[
\beta^-_B-\underline z
=-B|x-x_b|^2-(C-C_0^{\eps,\nu})t\le0,
\]
\[
\beta^+_B-\overline z
=B|x-x_b|^2+(C-C_0^{\eps,\nu})t\ge0.
\]
These inequalities hold throughout the local space--time domain and hence
on its artificial parabolic-boundary portion. If
\(z_b=(0,y_b)\in\{0\}\times\partial\Omega\), the condition
\(w^0=0\) on \(\{d_\partial\le2\delta_b\}\) also gives the required
ordering on the portion contained in \((0,S)\times\partial\Omega\).

At a point $z_b=(t_b,y_b)\in(0,S)\times\partial\Omega$, choose the bounded relative parabolic domain
$D_b=(t_b-r,t_b+r)\times(\Omega\cap B_r(y_b))$ with
\[
 0<r<\min\{\delta,\delta_b,t_b/2,S-t_b\},
\]
where $\delta$ and $\kappa_0$ are from Lemma~\ref{lem:boundary-distance-control}.  Choose $B>0$ with
$Br^2\ge C_0^{\eps,\nu}S+1$.  For
\[
 q_A(t,x)=A d_\partial(x)+B\bigl(|x-y_b|^2+(t-t_b)^2\bigr),
\]
the identities \(D^2d_\partial\,Dd_\partial=0\) and
\(\nabla G=0\) give
\[
 \cL^{\eps,\nu}(x,A Dd_\partial,A D^2d_\partial)
 =A(\nu+G(x))H_{d_\partial(x)}\ge A\kappa_0.
\]
Assume that $A\ge4Br$ and define
\[
 \mathcal Q_\eps(p):=\frac{p\otimes p}{\eps^2+|p|^2}.
\]
For $q\ne0$ and $|p-q|\le |q|/2$, every vector
$r_s=q+s(p-q)$, $0\le s\le1$, satisfies $|r_s|\ge |q|/2$.  Moreover,
\[
 D\mathcal Q_\eps(r)[h]
 =\frac{h\otimes r+r\otimes h}{\eps^2+|r|^2}
 -\frac{2(r\cdot h)r\otimes r}{(\eps^2+|r|^2)^2},
 \qquad
 \|D\mathcal Q_\eps(r)[h]\|\le\frac{4|h|}{|r|}.
\]
Integration along the segment from $q$ to $p$ therefore gives
\[
 \|\mathcal Q_\eps(p)-\mathcal Q_\eps(q)\|
 \le\frac{8}{|q|}|p-q|.
\]
For $p_\pm=D(\pm q_A)$ and $q_\pm=\pm A Dd_\partial$, one has
$|p_\pm-q_\pm|\le2Br\le A/2$ and hence
\[
 \|\mathcal Q_\eps(p_\pm)-\mathcal Q_\eps(q_\pm)\|
 \le\frac{16Br}{A}.
\]
Let
\[
 K_2:=\sup_{\{0<d_\partial<\delta_b\}}\|D^2d_\partial\|,
 \qquad C_1^{\nu}:=2d\nu+2g^*(d+1),
 \qquad C_2:=16d g^*K_2.
\]
Since $D^2(\pm q_A)=\pm(A D^2d_\partial+2BI)$,
$\nabla G=0$ on $\{0<d_\partial<\delta_b\}$, and
$\|\mathcal Q_\eps(p)\|\le1$, formula~\eqref{eq:regularized-operator}
yields
\[
 \left|\cL^{\eps,\nu}(x,D(\pm q_A),D^2(\pm q_A))
 \mp\cL^{\eps,\nu}(x,A Dd_\partial,A D^2d_\partial)\right|
 \le C_1^{\nu}B+C_2Br.
\]
Choose $A$ additionally so that
\[
 A\kappa_0\ge M+2+C_1^{\nu}B+(C_2+2)Br.
\]
Set \(\beta^+_B=q_A\) and \(\beta^-_B=-q_A\). The preceding
estimates imply
\[
\partial_t\beta^-_B
+\cL^{\eps,\nu}(x,D\beta^-_B,D^2\beta^-_B)
\le f-1,
\]
\[
\partial_t\beta^+_B
+\cL^{\eps,\nu}(x,D\beta^+_B,D^2\beta^+_B)
\ge f+1.
\]
On \((0,S)\times\partial\Omega\),
\[
\beta^-_B\le0\le\beta^+_B,
\]
and the choice \(Br^2\ge C_0^{\eps,\nu}S+1\) gives
\[
\beta^-_B\le\underline z,
\qquad
\overline z\le\beta^+_B
\]
on the artificial parabolic-boundary portion. All derivatives in these
inequalities are evaluated in the original coordinates.

Choose \(0<\sigma<\rho/2\) and define
\[
\widetilde\beta^-_{b,\rho}:=\beta^-_B-\sigma,
\qquad
\widetilde\beta^+_{b,\rho}:=\beta^+_B+\sigma.
\]
Since \(\sigma\) is independent of \(t\) and \(x\),
\[
\partial_t\widetilde\beta^\pm_{b,\rho}=\partial_t\beta^\pm_B,
\qquad
D\widetilde\beta^\pm_{b,\rho}=D\beta^\pm_B,
\qquad
D^2\widetilde\beta^\pm_{b,\rho}=D^2\beta^\pm_B.
\]
Thus the differential inequalities and the ordering on
\(\Gamma_{b,\mathrm{art}}\) remain valid, while
\[
\widetilde\beta^-_{b,\rho}\le\gamma-\sigma,
\qquad
\gamma+\sigma\le\widetilde\beta^+_{b,\rho}
\quad\text{on }\Gamma_{b,\mathrm{data}}.
\]
In each of the three boundary cases constructed above,
\[
\beta^-_B(z_b)=\beta^+_B(z_b)=\gamma(z_b).
\]
Continuity therefore gives
\[
\lim_{D_b\ni z\to z_b}\widetilde\beta^-_{b,\rho}(z)
=\gamma(z_b)-\sigma>\gamma(z_b)-\rho,
\qquad
\lim_{D_b\ni z\to z_b}\widetilde\beta^+_{b,\rho}(z)
=\gamma(z_b)+\sigma<\gamma(z_b)+\rho.
\]
Hence \textup{(B1)}--\textup{(B3)} hold.
\end{proof}

\begin{proposition}[Fixed-regularization Dirichlet problem]\label{prop:regularized-dirichlet}
Fix $\eps,\nu>0$, and $T>0$.  Let
$w^0\in C^2(\overline\Omega)$ satisfy
\[
0\le w^0\le1,
\qquad
w^0\equiv0
\quad\text{on }\{x\in\overline\Omega:\dist(x,\partial\Omega)\le2\delta_b\},
\]
and let $f\in C([0,T]\times\overline\Omega)$ satisfy
\[
-M\le f\le0,
\qquad
f=0\quad\text{on }(0,T]\times\partial\Omega.
\]
Then the regularized problem associated with \eqref{eq:regularized-operator}, with initial
datum $w^0$ and homogeneous datum on $(0,T]\times\partial\Omega$, has comparison and a unique bounded continuous
\emph{Dirichlet} viscosity solution in the closed-time-interval sense of
Remark~\ref{rem:closed-time-interval}.  Moreover, the boundary-distance estimate is uniform for all
\(\eps,\nu>0\), while the initial-time estimate is uniform when \(\nu\)
ranges over a bounded subset of \((0,\infty)\).
\end{proposition}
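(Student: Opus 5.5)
The plan is to mirror the proof of Proposition~\ref{prop:dirichlet}, replacing the geometric operator by the continuous operator $\cL^{\eps,\nu}$. Comparison is supplied by Lemma~\ref{lem:fixed-parameter-structural-comparison}, with $h=\mathcal E_Tf$ on $[0,T^+]$, $T^+=T+1$. Since $F_h^{\eps,\nu}$ is continuous, we have $\mathfrak L_*=\mathfrak L^*=\cL^{\eps,\nu}$. The standard stability of subsolutions under finite maxima and under upper semicontinuous envelopes of suprema holds by \cite[Lemma~4.2, Theorem~4.1]{CrandallIshiiLions1992}. Thus all abstract hypotheses of Lemma~\ref{lem:perron-boundary-closure} are in force.

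Lemma~\ref{lem:regularized-local-perron-functions}, applied with $S=T^+$, furnishes the global strict sub- and supersolutions $w^0\mp C_0^{\eps,\nu}t$, together with local barriers satisfying (B1)--(B3) at every parabolic-boundary point. Lemma~\ref{lem:perron-boundary-closure} then yields a unique bounded continuous Dirichlet solution on $[0,T^+)\times\overline\Omega$ attaining the data. We restrict it to $[0,T]$ and argue, exactly as in Proposition~\ref{prop:dirichlet}, that the restriction is independent of the continuation. Uniqueness in the Dirichlet class is the comparison statement itself. Comparing with the constants $1$ and $-Mt$, which are strict sub/supersolutions after adding $\pm\sigma(1+t)$ because their gradient and Hessian vanish and $\cL^{\eps,\nu}(x,0,0)=0$, gives $-Mt\le w\le 1$.

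For the uniform boundary-distance estimate, I would repeat the second half of the proof of Lemma~\ref{lem:boundary-distance-control} using the barriers $\pm(A d_\partial+\sigma(1+t))$ in the collar $D_\delta$. The key identity is
\[
\cL^{\eps,\nu}(x,ADd_\partial,AD^2d_\partial)=A(\nu+G)H_{d_\partial}\ge A\kappa_0 .
\]
This holds for every $\eps,\nu>0$ because $D^2d_\partial\,Dd_\partial=0$ kills the $\eps$-dependent quotient and $\nabla G=0$ in the collar. The factor $\nu+G\ge g_*$ only helps. The constant $A_{M,T}$, chosen with $A\delta\ge1+MT$ and $A\kappa_0\ge M$, is therefore independent of $\eps$ and $\nu$. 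The global bound $-Mt\le w\le1$ on the inner edge $\{d_\partial=\delta\}$ is also parameter-free. Lemma~\ref{lem:strict-relaxed-boundary-comparison} in the regularized case then closes this step.

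For the initial-time estimate, I would use the barriers $w^0\pm(C t+\sigma(1+t))$ as in Lemma~\ref{lem:geometric-initial-control}, with $C=B_0+M+1$. The required bound is
\[
|\cL^{\eps,\nu}(x,Dw^0,D^2w^0)|\le (d\nu+(d+1)g^*)\|D^2w^0\|_\infty+\|DG\|_\infty\|Dw^0\|_\infty ,
\]
which follows since $\|\mathcal Q_\eps(p)\|\le1$. This bound is uniform in $\eps$ and uniform for $\nu$ in bounded sets, which is exactly the claimed restriction.

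The main obstacle I expect is bookkeeping rather than analysis. The constants in Lemma~\ref{lem:regularized-local-perron-functions} ($C_0^{\eps,\nu}$, $C_1^\nu$, $A$) depend on $\eps$ and $\nu$, so one must be careful to derive the uniform estimates from the separate barrier arguments above rather than from the Perron barriers. One must also check that the constant shifts used with strict inequalities are compatible with the strict-boundary form required in Lemma~\ref{lem:strict-relaxed-boundary-comparison}.
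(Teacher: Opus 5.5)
Your proposal is correct and takes essentially the same route as the paper. It obtains comparison from Lemma~\ref{lem:fixed-parameter-structural-comparison} with $h=\mathcal E_Tf$, applies the Perron closure of Lemma~\ref{lem:perron-boundary-closure} with the barriers of Lemma~\ref{lem:regularized-local-perron-functions} on $[0,T^+)$, and then restricts to $[0,T]$. The only difference is presentational: you inline the parameter-free barrier arguments, including the identity $\cL^{\eps,\nu}(x,ADd_\partial,AD^2d_\partial)=A(\nu+G)H_{d_\partial}$ and the $\eps$-uniform bound on $\cL^{\eps,\nu}(x,Dw^0,D^2w^0)$, whereas the paper defers the same barriers to Lemmas~\ref{lem:regularized-boundary} and~\ref{lem:initial-time-control}.
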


\begin{proof}
Set \(T^+=T+1\) and write \(\widehat f:=\mathcal E_Tf\).
Lemma~\ref{lem:fixed-parameter-structural-comparison}, applied with
\(h=\widehat f\), gives comparison for the fixed-parameter problem on
\([0,T^+)\times\Omega\).

Apply Lemma~\ref{lem:regularized-local-perron-functions} on
$[0,T^+)\times\overline\Omega$ with the continued function $\widehat f$. The global and local functions constructed there satisfy \textup{(B1)}--\textup{(B3)} of Lemma~\ref{lem:perron-boundary-closure}, and continuity at $p=0$ permits the use of Lemma~\ref{lem:strict-relaxed-boundary-comparison}. Lemma~\ref{lem:perron-boundary-closure} therefore yields a continuous Dirichlet viscosity solution attaining the prescribed initial data and boundary data on $(0,T^+]\times\partial\Omega$. The comparison principle gives uniqueness.
Restricting to $[0,T]$ yields the claimed solution on the closed time interval and makes the restriction
independent of the admissible continuation.  The uniform boundary-distance and initial-time estimates are
proved separately in Lemmas~\ref{lem:regularized-boundary}--\ref{lem:initial-time-control}.
\end{proof}

\begin{proposition}[Comparison for the regularized equation with monotone reaction]
\label{prop:regularized-reaction-comparison}
Fix \(\eps,\nu>0\) and \(T>0\). Let \(u\) be a bounded
upper-semicontinuous subsolution and \(v\) a bounded lower-semicontinuous
supersolution of
\[
\partial_tz+\cL^{\eps,\nu}(x,Dz,D^2z)+\mu R_\eta(x,z)=0
\qquad\text{in }Q_T.
\]
If \(u^*\le v_*\) on \(\partial_PQ_T\), then \(u\le v\) in \(Q_T\).
\end{proposition}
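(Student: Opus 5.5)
The plan is to deduce this from the standard comparison theorem for continuous, proper, degenerate elliptic operators, \cite[Theorem~8.2]{CrandallIshiiLions1992}, by checking its structural condition \cite[(3.14)]{CrandallIshiiLions1992} for the full operator
\[
F^{\eps,\nu}_\eta(t,x,r,p,X):=\cL^{\eps,\nu}(x,p,X)+\mu R_\eta(x,r).
\]
This follows the model of Lemma~\ref{lem:fixed-parameter-structural-comparison}, with the reaction now placed in the zeroth-order slot instead of the frozen inhomogeneity \(h\). Since \(\eps,\nu>0\) are fixed, \(F^{\eps,\nu}_\eta\) is continuous on \(\overline\Omega\times\R\times\R^d\times\mathbb S^d\). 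No semicontinuous envelopes at \(p=0\) are needed, so the sub- and supersolution inequalities involve only \(F^{\eps,\nu}_\eta\) itself.

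The first step records the structural properties. Degenerate ellipticity is inherited from Lemma~\ref{lem:fixed-parameter-structural-comparison}, because the reaction does not depend on \(X\). Properness follows from \(\partial_rR_\eta=\Lambda h_\eta\ge0\) (Lemma~\ref{lem:reaction-inhomogeneity}). This is only weak monotonicity, so the standard exponential change \(\widetilde u=e^{-\lambda t}u\), \(\widetilde v=e^{-\lambda t}v\) is used to create strict monotonicity. The transformed operator is
\[
\lambda r+\cL^{\eps,\nu}(x,p,X)+e^{-\lambda t}\mu R_\eta(x,e^{\lambda t}r),
\]
where \(\cL^{\eps,\nu}\) is unchanged because it is positively one-homogeneous in \((p,X)\). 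Its \(r\)-derivative is at least \(\lambda\), and \(\mu R_\eta\) stays Lipschitz in \(r\) and bounded. Alternatively, one can simply note that the parabolic version of \cite[Theorem~8.2]{CrandallIshiiLions1992} only needs properness on a bounded time interval, where the \(e^{-\lambda t}\) trick is built in; I would cite it that way. The boundary hypothesis \(u^*\le v_*\) on \(\partial_PQ_T\) is unaffected by the multiplication by \(e^{-\lambda t}\).

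The second step, and the only one requiring care, is the structural estimate (3.14) with \(p=\alpha(x-y)\) and the block-matrix inequality. The \(x\)-dependence is split. The diffusion and drift parts are handled exactly as in Lemma~\ref{lem:fixed-parameter-structural-comparison}, via the square-root bound \eqref{eq:fixed-parameter-square-root-modulus}, which gives \(C_\nu\alpha|x-y|^2+C|x-y|\). The reaction part satisfies
\[
|R_\eta(x,r)-R_\eta(y,r)|\le\norm{D\Lambda}_{L^\infty}|x-y|
\]
uniformly in \(r\), since \(0\le H_\eta\le1\) and \(\Lambda\in C^2\) by (A3). Together these give a modulus \(\omega(\rho)=C_\nu\rho+C'\rho\), as required, with the same \(r\) in both arguments.

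The main obstacle is that the constant \(C_\nu\sim\nu^{-1}\) in the square-root estimate degenerates as \(\nu\to0\). For fixed \(\nu>0\), as here, this is harmless, so the proof closes. The rest is bookkeeping: verifying that \(u\), \(v\) are bounded (given), and then invoking \cite[Theorem~8.2]{CrandallIshiiLions1992} on \(Q_T\) to conclude \(u\le v\). Undoing the exponential scaling then yields the claim.
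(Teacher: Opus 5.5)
Your proposal follows the paper's proof: both check continuity, degenerate ellipticity, properness from $\partial_rR_\eta=\Lambda h_\eta\ge0$, and condition \cite[(3.14)]{CrandallIshiiLions1992}, enlarging the modulus of Lemma~\ref{lem:fixed-parameter-structural-comparison} by the bound $|R_\eta(x,r)-R_\eta(y,r)|\le\norm{D\Lambda}_{L^\infty}|x-y|$, and then invoke \cite[Theorem~8.2]{CrandallIshiiLions1992}, which needs only properness. One correction to your optional aside: $\cL^{\eps,\nu}$ is not positively one-homogeneous in $(p,X)$ when $\eps>0$, and the change $\widetilde u=e^{-\lambda t}u$ actually turns the principal part into the time-dependent operator $\cL^{\eps e^{-\lambda t},\nu}$; this is harmless, because you do not need that change and, in any case, the square-root estimate \eqref{eq:fixed-parameter-square-root-modulus} is independent of $\eps$, so (3.14) would still hold uniformly in $t$.
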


\begin{proof}
Define
\[
F_\eta^{\eps,\nu}(x,r,p,X)
:=\cL^{\eps,\nu}(x,p,X)+\mu R_\eta(x,r).
\]
The operator is continuous and degenerate elliptic. If \(r\ge s\), then
\[
F_\eta^{\eps,\nu}(x,r,p,X)-F_\eta^{\eps,\nu}(x,s,p,X)
=\mu\Lambda(x)\bigl(H_\eta(r-q)-H_\eta(s-q)\bigr)\ge0,
\]
so it is proper. The matrix estimate in
Lemma~\ref{lem:fixed-parameter-structural-comparison} remains valid after
adding the reaction term because
\[
|R_\eta(x,r)-R_\eta(y,r)|
\le\|D\Lambda\|_{L^\infty(\Omega)}|x-y|.
\]
Thus condition (3.14) of \cite{CrandallIshiiLions1992} holds with the
previous modulus enlarged by a linear term. Theorem~8.2 of that reference
gives the asserted comparison.
\end{proof}

\begin{lemma}[Fixed-regularization stability under prescribed inhomogeneities]
\label{lem:regularized-inhomogeneous-stability}
Fix $\eps,\nu>0$, and $T>0$.  Let $f,g\in
C([0,T]\times\overline{\Omega})$ satisfy
\[
-M\le f,g\le0,
\qquad
f=g=0\quad\text{on }(0,T]\times\partial\Omega,
\]
and let $v_f^{\eps,\nu}$ and $v_g^{\eps,\nu}$ be the Dirichlet viscosity solutions of the
corresponding regularized equations with initial data
$v_f^0,v_g^0\in C^2(\overline\Omega)$ satisfying
\[
v_f^0=v_g^0=0
\qquad\text{on }\{x\in\overline\Omega:d_\partial(x)\le2\delta_b\},
\]
and otherwise belonging to the class specified in
Proposition~\ref{prop:regularized-dirichlet}. Then
\begin{equation}\label{eq:regularized-inhomogeneous-stability}
\norm{v_f^{\eps,\nu}(t)-v_g^{\eps,\nu}(t)}_{L^\infty}
\le \norm{v_f^0-v_g^0}_{L^\infty}
+\int_0^t\norm{f(s)-g(s)}_{L^\infty}\,\dd s.
\end{equation}
\end{lemma}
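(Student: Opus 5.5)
The argument mirrors the proof of Lemma~\ref{lem:inhomogeneous-stability}, with the geometric comparison principle replaced by the fixed-parameter comparison of Lemma~\ref{lem:fixed-parameter-structural-comparison}. Set
\[
a(t)=\|v_f^0-v_g^0\|_{L^\infty(\Omega)}+\int_0^t\|f(s)-g(s)\|_{L^\infty(\Omega)}\,\dd s .
\]
Continuity of $f-g$ on the compact set $[0,T]\times\overline\Omega$ makes $t\mapsto\|f(t)-g(t)\|_{L^\infty}$ continuous. Hence $a\in C^1([0,T])$ and $-a'(t)\le f(t,x)-g(t,x)\le a'(t)$. I will show that $v_g^{\eps,\nu}+a$ is a viscosity supersolution and $v_g^{\eps,\nu}-a$ a viscosity subsolution of
\[
\partial_tw+\cL^{\eps,\nu}(x,Dw,D^2w)=f .
\]

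\textbf{Shifted functions are sub- and supersolutions.} If $v_g^{\eps,\nu}+a-\varphi$ has a local minimum at $(t_0,x_0)$, then $v_g^{\eps,\nu}-(\varphi-a)$ has a local minimum there. Since $a$ depends only on $t$, the test function $\varphi-a$ has the same spatial derivatives as $\varphi$. The supersolution inequality for $v_g^{\eps,\nu}$, with the continuous operator $\cL^{\eps,\nu}$ (no semicontinuous envelopes are needed for fixed $\eps,\nu>0$), gives
\[
\partial_t\varphi+\cL^{\eps,\nu}(x_0,D\varphi,D^2\varphi)-f\ge a'(t_0)+g-f\ge0 .
\]
The subsolution case is symmetric.

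\textbf{Boundary ordering and comparison.} On $(0,T]\times\partial\Omega$ both solutions vanish, since they are Dirichlet solutions attaining the data by Proposition~\ref{prop:regularized-dirichlet}. There $(v_g^{\eps,\nu}-a)^*=-a\le0=(v_f^{\eps,\nu})_*$ and $(v_f^{\eps,\nu})^*=0\le a=(v_g^{\eps,\nu}+a)_*$. At $t=0$ the definition of $a(0)$ gives $v_g^0-a(0)\le v_f^0\le v_g^0+a(0)$. Comparison for the fixed regularized equation with inhomogeneity $f$ (Lemma~\ref{lem:fixed-parameter-structural-comparison} with $h=f$, or Proposition~\ref{prop:regularized-dirichlet}) then yields $v_g^{\eps,\nu}-a\le v_f^{\eps,\nu}\le v_g^{\eps,\nu}+a$ in $Q_T$. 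Taking the supremum in $x$ gives \eqref{eq:regularized-inhomogeneous-stability}, and the estimate extends to $t=T$ by continuity.

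\textbf{Main point to check.} The only subtle step is applying comparison in the form of \cite[Theorem~8.2]{CrandallIshiiLions1992} on the closed interval. The boundary values must be attained continuously, which holds because both solutions are continuous on $[0,T]\times\overline\Omega$, and the shifted functions $v_g^{\eps,\nu}\pm a$ are continuous as well. Should the terminal-time convention matter, I would work on $Q_S$ for every $S<T$ and let $S\uparrow T$, using Remark~\ref{rem:closed-time-interval}.
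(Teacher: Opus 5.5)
Your proposal is correct and follows essentially the same argument as the paper. Both use the shift \(a(t)=\|v_f^0-v_g^0\|_{L^\infty}+\int_0^t\|f(s)-g(s)\|_{L^\infty}\,\mathrm ds\), check that \(v_g^{\eps,\nu}\pm a\) are a super- and subsolution for the inhomogeneity \(f\), order the initial and lateral boundary values, and conclude by the fixed-parameter comparison of Proposition~\ref{prop:regularized-dirichlet} (via Lemma~\ref{lem:fixed-parameter-structural-comparison}).
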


\begin{proof}
Set
\[
 a(t)=\norm{v_f^0-v_g^0}_{L^\infty}
 +\int_0^t\norm{f(s)-g(s)}_{L^\infty}\,\dd s.
\]
The map $a$ belongs to $C^1([0,T])$ and satisfies
\[
a'(t)=\|f(t)-g(t)\|_{L^\infty(\Omega)},\qquad
-a'(t)\le f(t,x)-g(t,x)\le a'(t).
\]
Because $a$ is independent of $x$,
\[
D_x(v_g^{\eps,\nu}\pm a)=D_xv_g^{\eps,\nu},\qquad
D_x^2(v_g^{\eps,\nu}\pm a)=D_x^2v_g^{\eps,\nu}.
\]
Let $\varphi\in C^{1,2}$ and suppose that
$v_g^{\eps,\nu}+a-\varphi$ has a local minimum at $(t_0,x_0)$. Then
$v_g^{\eps,\nu}-(\varphi-a)$ has a local minimum at the same point, and
the viscosity supersolution inequality for $v_g^{\eps,\nu}$ gives
\[
\partial_t\varphi(t_0,x_0)-a'(t_0)
+\cL^{\eps,\nu}\!\left(x_0,D_x\varphi(t_0,x_0),D_x^2\varphi(t_0,x_0)\right)
-g(t_0,x_0)\ge0.
\]
Consequently,
\[
\partial_t\varphi(t_0,x_0)
+\cL^{\eps,\nu}\!\left(x_0,D_x\varphi(t_0,x_0),D_x^2\varphi(t_0,x_0)\right)-f(t_0,x_0)
\ge a'(t_0)+g(t_0,x_0)-f(t_0,x_0)\ge0.
\]
Thus $v_g^{\eps,\nu}+a$ is a viscosity supersolution of the equation with
inhomogeneity $f$. Similarly, if
$v_g^{\eps,\nu}-a-\varphi$ has a local maximum at $(t_0,x_0)$, then
$v_g^{\eps,\nu}-(\varphi+a)$ has a local maximum there, and
\[
\partial_t\varphi(t_0,x_0)
+\cL^{\eps,\nu}\!\left(x_0,D_x\varphi(t_0,x_0),D_x^2\varphi(t_0,x_0)\right)-f(t_0,x_0)
\le g(t_0,x_0)-f(t_0,x_0)-a'(t_0)\le0.
\]
Hence $v_g^{\eps,\nu}-a$ is a viscosity subsolution of the equation with
inhomogeneity $f$.

The ordered boundary hypotheses of the fixed-regularization comparison principle
are again explicit.  On $(0,T]\times\partial\Omega$,
\[
 (v_g^{\eps,\nu}-a)^*=-a\le0=(v_f^{\eps,\nu})_*,
 \qquad
 (v_f^{\eps,\nu})^*=0\le a=(v_g^{\eps,\nu}+a)_*,
\]
while on $\{0\}\times\overline\Omega$,
\[
 (v_g^{\eps,\nu}-a)^*=v_g^0-a(0)\le v_f^0=(v_f^{\eps,\nu})_*,
 \qquad
 (v_f^{\eps,\nu})^*=v_f^0\le v_g^0+a(0)
 =(v_g^{\eps,\nu}+a)_*.
\]
Comparison from Proposition~\ref{prop:regularized-dirichlet} therefore yields
\[
 v_g^{\eps,\nu}-a\le v_f^{\eps,\nu}
 \le v_g^{\eps,\nu}+a,
\]
which is \eqref{eq:regularized-inhomogeneous-stability}.
\end{proof}

\begin{lemma}[Boundary-distance estimate uniform in the regularization]
\label{lem:regularized-boundary}
Under Assumption~\ref{ass:data}, there exist constants
\[
\delta\in(0,\delta_b]
\qquad\text{and}\qquad
\kappa_b>0
\]
such that the following assertion holds. Let \(\eps,\nu>0\),
\(T,M>0\), and let \(w^{\eps,\nu}\) be the solution from
Proposition~\ref{prop:regularized-dirichlet} corresponding to data satisfying
\[
-M\le f\le0
\quad\text{in }[0,T]\times\overline\Omega,
\qquad
f=0
\quad\text{on }(0,T]\times\partial\Omega,
\]
\[
0\le w^0\le1
\quad\text{in }\overline\Omega,
\qquad
w^0(x)=0
\quad\text{whenever }d_\partial(x)\le2\delta_b.
\]
Then there exists \(A_{M,T}>0\), independent of
\((\eps,\nu)\) and of the solution, such that
\[
-A_{M,T}d_\partial(x)
\le w^{\eps,\nu}(t,x)
\le A_{M,T}d_\partial(x)
\]
for every
\[
(t,x)\in[0,T]\times\Omega
\qquad\text{with}\qquad
0<d_\partial(x)<\delta.
\]
\end{lemma}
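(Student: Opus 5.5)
The argument follows the proof of Lemma~\ref{lem:boundary-distance-control}, with the geometric operator replaced by $\cL^{\eps,\nu}$ and all constants tracked so that they do not depend on $(\eps,\nu)$. The comparison tool is Lemma~\ref{lem:strict-relaxed-boundary-comparison} for the fixed operator $\mathfrak L=\cL^{\eps,\nu}$, which is continuous, so $\mathfrak L_*=\mathfrak L^*=\cL^{\eps,\nu}$.

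The first step is the distance barrier. Take $\delta\le\min\{\delta_0,\delta_b\}$ exactly as in Lemma~\ref{lem:boundary-distance-control}, so that $H_{d_\partial(x)}\ge H_0/2$ on $\{0<d_\partial<\delta\}$. On this set $|Dd_\partial|=1$, $D^2d_\partial\,Dd_\partial=0$ and $\nabla G=0$. Hence, for $A>0$, the quotient term drops out of $\cL^{\eps,\nu}(x,A\,Dd_\partial,A\,D^2d_\partial)$, because $D^2d_\partial\,Dd_\partial\cdot Dd_\partial=0$, and
\[
\cL^{\eps,\nu}(x,A\,Dd_\partial,A\,D^2d_\partial)=A(\nu+G(x))H_{d_\partial(x)}\ge A\,g_*H_0/2 .
\]
This is the identity already used in Lemma~\ref{lem:regularized-local-perron-functions}. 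The key observation is that $\nu\ge0$ only helps, and $\eps$ does not enter at all. We may therefore set $\kappa_b:=g_*H_0/2$, and this choice is uniform in $(\eps,\nu)$. The operator is odd in $(p,X)$ jointly, so the same identity with $-A$ gives $\cL^{\eps,\nu}(x,-A\,Dd_\partial,-A\,D^2d_\partial)\le -A\kappa_b$.

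The second step is the global bounds $-Mt\le w^{\eps,\nu}\le 1$. Compare $w^{\eps,\nu}$ with the spatially constant functions $1+\sigma(1+t)$ and $-Mt-\sigma(1+t)$. Their spatial derivatives vanish and $\cL^{\eps,\nu}(x,0,0)=0$, so they are strict super- and subsolutions whose constants do not depend on $(\eps,\nu)$. The strict ordering on $\partial_PQ_T$ follows from $0\le w^0\le1$ and the zero boundary datum. Applying Lemma~\ref{lem:strict-relaxed-boundary-comparison} on all of $Q_T$ (in the closed-interval sense of Remark~\ref{rem:closed-time-interval}) and letting $\sigma\to0$ gives the bounds.

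The third step is the local barrier comparison. Choose $A_{M,T}$ with $A_{M,T}\delta\ge1+MT$ and $A_{M,T}\kappa_b\ge M$, and set $\overline b_\sigma=A_{M,T}d_\partial+\sigma(1+t)$ and $\underline b_\sigma=-\overline b_\sigma$ on $D_\delta=(0,T)\times\{0<d_\partial<\delta\}$. By the first step and $f\le0$,
\[
\partial_t\overline b_\sigma+\cL^{\eps,\nu}(x,D\overline b_\sigma,D^2\overline b_\sigma)\ge\sigma+A_{M,T}\kappa_b\ge f+\sigma .
\]
Symmetrically, using $f\ge-M$,
\[
\partial_t\underline b_\sigma+\cL^{\eps,\nu}\le-\sigma-A_{M,T}\kappa_b\le f-\sigma .
\]
The boundary orderings are checked piece by piece:
\begin{itemize}
\item on $\{0\}\times\{0<d_\partial<\delta\}$, the datum is $w^0=0$, because $\delta\le\delta_b$;
\item on $(0,T)\times\partial\Omega$, the datum is $0$;
\item on the artificial boundary $\{d_\partial=\delta\}$, the second step together with $A_{M,T}\delta\ge1+MT$ gives the required ordering.
\end{itemize}
Lemma~\ref{lem:strict-relaxed-boundary-comparison} then yields $|w^{\eps,\nu}|\le A_{M,T}d_\partial+\sigma(1+t)$ on $D_\delta$, and letting $\sigma\to0$ gives the claim.

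The main obstacle is making sure that nothing in the chain depends on $(\eps,\nu)$. The comparison lemma invoked for each fixed $(\eps,\nu)$, Lemma~\ref{lem:fixed-parameter-structural-comparison}, has $\nu$-dependent constants. That dependence is harmless, since it only affects the validity of comparison for each fixed parameter pair, not the barrier constants. The delicate point is the exact vanishing of the $\eps$-quotient for the barrier, which rests on $D^2d_\partial\,Dd_\partial=0$. Without it the barrier identity of the first step would fail, and I would check it carefully first.
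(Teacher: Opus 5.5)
Your proposal is correct and follows the paper's proof essentially step for step. Both arguments use the same three ingredients:
\begin{itemize}
\item the $\eps$-free barrier identity $\cL^{\eps,\nu}(x,A\,Dd_\partial,A\,D^2d_\partial)=A(\nu+G)H_{d_\partial(x)}\ge A\kappa_b$ with $\kappa_b=g_*H_0/2$, which rests on $D^2d_\partial\,Dd_\partial=0$ and $\nabla G=0$;
\item the global bounds $-Mt\le w^{\eps,\nu}\le1$, obtained from spatially constant strict barriers;
\item the local comparison on $D_\delta$ with $A_{M,T}\delta\ge1+MT$ and $A_{M,T}\kappa_b\ge M$, via Lemma~\ref{lem:strict-relaxed-boundary-comparison}, followed by letting $\sigma\to0$.
\end{itemize}
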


\begin{proof}
Let \(\delta\) be the constant from
Lemma~\ref{lem:boundary-distance-control}. For
\[
0<s<\delta,
\]
the curvature computation in that lemma gives
\[
H_s\ge\frac{H_0}{2}.
\]
Moreover,
\[
DG=0
\qquad\text{on }
\{x\in\Omega:0<d_\partial(x)<\delta_b\}.
\]
For
\[
p=A Dd_\partial,
\qquad
X=A D^2d_\partial,
\]
one has
\[
\operatorname{tr}X=-AH_s,
\qquad
Xp=A^2D^2d_\partial\,Dd_\partial=0.
\]
Therefore,
\begin{align*}
\cL^{\eps,\nu}(x,p,X)
&=
-\nu\operatorname{tr}X
-G(x)\left(
\operatorname{tr}X-
\frac{Xp\cdot p}{\eps^2+|p|^2}
\right)
-DG(x)\cdot p\\
&=
A(\nu+G(x))H_s\\
&\ge
A g_*\frac{H_0}{2}.
\end{align*}
Replacing \((p,X)\) by \((-p,-X)\) changes the sign of the
operator value. Set
\[
\kappa_b:=\frac{g_*H_0}{2}.
\]
Then
\[
\cL^{\eps,\nu}
\bigl(x,A Dd_\partial,A D^2d_\partial\bigr)
\ge A\kappa_b
\]
and
\[
\cL^{\eps,\nu}
\bigl(x,-A Dd_\partial,-A D^2d_\partial\bigr)
\le -A\kappa_b
\]
for every \(\eps,\nu>0\) and every
\(x\) satisfying \(0<d_\partial(x)<\delta\).

For \(\sigma>0\), define
\[
\overline q_\sigma(t,x)=1+\sigma(1+t),
\qquad
\underline q_\sigma(t,x)=-Mt-\sigma(1+t).
\]
Since
\[
D\overline q_\sigma=D\underline q_\sigma=0,
\qquad
D^2\overline q_\sigma=D^2\underline q_\sigma=0,
\]
one has
\[
\partial_t\overline q_\sigma
+\cL^{\eps,\nu}
\bigl(x,D\overline q_\sigma,D^2\overline q_\sigma\bigr)
=\sigma
\ge f+\sigma,
\]
and
\[
\partial_t\underline q_\sigma
+\cL^{\eps,\nu}
\bigl(x,D\underline q_\sigma,D^2\underline q_\sigma\bigr)
=-M-\sigma
\le f-\sigma.
\]
The initial inequalities and the inequalities on
\((0,T]\times\partial\Omega\) required by
Lemma~\ref{lem:strict-relaxed-boundary-comparison} also hold. Hence
\[
-Mt-\sigma(1+t)
\le w^{\eps,\nu}(t,x)
\le1+\sigma(1+t)
\]
on \([0,T]\times\overline\Omega\). Taking
\(\sigma=1/n\), \(n\in\mathbb N\), gives
\begin{equation}
\label{eq:regularized-global-range}
-Mt\le w^{\eps,\nu}(t,x)\le1
\qquad
\text{on }[0,T]\times\overline\Omega.
\end{equation}

Choose \(A_{M,T}>0\) such that
\[
A_{M,T}\delta\ge1+MT,
\qquad
A_{M,T}\kappa_b\ge M.
\]
Set
\[
D_\delta
=
(0,T)\times
\{x\in\Omega:0<d_\partial(x)<\delta\},
\]
and define
\[
\overline b_\sigma(t,x)
=
A_{M,T}d_\partial(x)+\sigma(1+t),
\]
\[
\underline b_\sigma(t,x)
=
-A_{M,T}d_\partial(x)-\sigma(1+t).
\]
The preceding operator inequalities give
\[
\partial_t\overline b_\sigma
+
\cL^{\eps,\nu}
\bigl(x,D\overline b_\sigma,D^2\overline b_\sigma\bigr)
\ge
A_{M,T}\kappa_b+\sigma
\ge f+\sigma,
\]
and
\[
\partial_t\underline b_\sigma
+
\cL^{\eps,\nu}
\bigl(x,D\underline b_\sigma,D^2\underline b_\sigma\bigr)
\le
-A_{M,T}\kappa_b-\sigma
\le f-\sigma
\]
in \(D_\delta\).

On
\[
\partial_PD_\delta\cap
\left(
\{0\}\times\overline\Omega
\ \cup\
(0,T)\times\partial\Omega
\right),
\]
the assumptions on \(w^0\) and the homogeneous boundary datum give
\[
\underline b_\sigma\le\gamma-\sigma,
\qquad
\gamma+\sigma\le\overline b_\sigma.
\]
On
\[
(0,T)\times\{x\in\Omega:d_\partial(x)=\delta\},
\]
the estimate \eqref{eq:regularized-global-range} and
\(A_{M,T}\delta\ge1+MT\) give
\[
\underline b_\sigma
\le w^{\eps,\nu}
\le\overline b_\sigma.
\]
Lemma~\ref{lem:strict-relaxed-boundary-comparison} yields
\[
-A_{M,T}d_\partial(x)-\sigma(1+t)
\le w^{\eps,\nu}(t,x)
\le
A_{M,T}d_\partial(x)+\sigma(1+t)
\]
for every \((t,x)\in D_\delta\). Taking
\(\sigma=1/n\), \(n\in\mathbb N\), proves
\[
-A_{M,T}d_\partial(x)
\le w^{\eps,\nu}(t,x)
\le A_{M,T}d_\partial(x).
\]
All constants are independent of \(\eps,\nu>0\).
\end{proof}

\begin{lemma}[Uniform initial-time estimate for the regularized equation]\label{lem:initial-time-control}
Let $w^0\in C^2(\overline{\Omega})$ satisfy the condition $w^0=0$ on $\{d_\partial\le2\delta_b\}$ and let
$\norm{f}_{L^\infty([0,T]\times\Omega)}\le M$.  For every \(\overline\nu>0\), there is a constant \(C_0\), depending on
\(w^0\), \(M\), the fixed coefficients, and \(\overline\nu\), but independent
of \(\eps>0\) and \(0<\nu\le\overline\nu\), such that every
continuous regularized Dirichlet viscosity solution attaining the homogeneous data on
$(0,T)\times\partial\Omega$ satisfies
\[
\norm{w^{\eps,\nu}(t,\cdot)-w^0}_{L^\infty(\Omega)}\le C_0t
\qquad(0\le t\le T).
\]
\end{lemma}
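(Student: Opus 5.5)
The plan mirrors the proof of Lemma~\ref{lem:geometric-initial-control}, with barriers of the form $w^0\pm C_0t\pm\sigma(1+t)$. The one extra point is that the constant bounding the operator on $w^0$ must not depend on $\eps$.

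\textbf{Step 1: an $\eps$-uniform bound on the operator at $w^0$.} From \eqref{eq:regularized-operator}, since $0\le \frac{p\otimes p}{\eps^2+|p|^2}\le I$ for every $p\in\R^d$ (including $p=0$), we have
\[
\abs{\frac{Xp\cdot p}{\eps^2+|p|^2}}\le\norm{X}.
\]
Hence for all $x\in\overline\Omega$,
\[
\abs{\cL^{\eps,\nu}(x,Dw^0,D^2w^0)}\le\bigl(d\overline\nu+(d+1)g^*\bigr)\norm{D^2w^0}_{L^\infty}+\norm{\nabla G}_{L^\infty}\norm{Dw^0}_{L^\infty}=:B_0 .
\]
This holds for all $\eps>0$ and $0<\nu\le\overline\nu$, and it is exactly where the restriction to bounded $\nu$ enters. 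Set $C_0:=B_0+M+1$. The constant is then independent of $\eps$ and $\nu$.

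\textbf{Step 2: the barriers.} For $\sigma>0$ define
\[
\overline b_\sigma=w^0+C_0t+\sigma(1+t),\qquad \underline b_\sigma=w^0-C_0t-\sigma(1+t).
\]
Their spatial derivatives are those of $w^0$, so
\[
\partial_t\overline b_\sigma+\cL^{\eps,\nu}(x,D\overline b_\sigma,D^2\overline b_\sigma)-f\ge C_0+\sigma-B_0-M\ge\sigma,
\]
and symmetrically $\partial_t\underline b_\sigma+\cL^{\eps,\nu}(\cdots)-f\le-\sigma$. No zero-gradient issue arises, because $\cL^{\eps,\nu}$ is continuous.

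\textbf{Step 3: boundary ordering on $\partial_PQ_T$.} At $t=0$ we have $\underline b_\sigma=w^0-\sigma=\gamma-\sigma$ and $\overline b_\sigma=\gamma+\sigma$. On $(0,T)\times\partial\Omega$, $w^0=0$ gives $\overline b_\sigma\ge\sigma=\gamma+\sigma$ and $\underline b_\sigma\le-\sigma$. Since the solution is continuous and attains its data, the semicontinuous envelopes coincide with $\gamma$ there. Lemma~\ref{lem:strict-relaxed-boundary-comparison} for the fixed operator $\cL^{\eps,\nu}$ (with $D=Q_T$ and $\Gamma_{\mathrm{art}}=\varnothing$) then gives $\underline b_\sigma\le w^{\eps,\nu}\le\overline b_\sigma$. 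That comparison is valid by Lemma~\ref{lem:fixed-parameter-structural-comparison}; its $\nu$-dependent modulus is harmless because $\eps,\nu$ are fixed in each application.

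\textbf{Step 4: conclusion.} Letting $\sigma=1/n\to0$ yields $\abs{w^{\eps,\nu}(t,x)-w^0(x)}\le C_0t$.

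The only real obstacle is Step 1: the bound must use the matrix inequality $0\le Q_\eps\le I$ rather than any continuity of the quotient. The $-\nu\tr X$ term forces the dependence on $\overline\nu$, which is why the estimate cannot be uniform in unbounded $\nu$.
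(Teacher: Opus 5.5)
Your proposal is correct and follows the paper's proof essentially step by step. Both arguments bound $\cL^{\eps,\nu}(x,Dw^0,D^2w^0)$ uniformly via $\norm{\frac{p\otimes p}{\eps^2+|p|^2}}\le1$ and $0<\nu\le\overline\nu$, use the barriers $w^0\pm C_0t\pm\sigma(1+t)$ with $C_0=B_0+M+1$, apply fixed-parameter comparison, and let $\sigma=1/n\to0$; the paper defines $B_0$ as the supremum over $\eps>0$, $0<\nu\le\overline\nu$, which your explicit constant majorizes.
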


\begin{proof}
Since
\[
\left\|\frac{p\otimes p}{\eps^2+|p|^2}\right\|\le1
\qquad(p\in\mathbb R^d),
\]
there exists a finite constant
\[
B_0
:=
\sup_{\substack{\eps>0\\0<\nu\le\overline\nu}}
\left\|
\cL^{\eps,\nu}(x,Dw^0,D^2w^0)
\right\|_{L^\infty(\Omega)}<\infty.
\]
In particular, if $Dw^0(x)=0$, then
\[
\cL^{\eps,\nu}(x,0,D^2w^0)
=-(\nu+G(x))\operatorname{tr}(D^2w^0),
\]
and this quantity is bounded in absolute value by $B_0$, uniformly for \(\eps>0\) and \(0<\nu\le\overline\nu\). Define
\[
C_0:=B_0+M+1.
\]
For $\sigma>0$, set
\[
\overline b_\sigma(t,x):=w^0(x)+C_0t+\sigma(1+t),
\qquad
\underline b_\sigma(t,x):=w^0(x)-C_0t-\sigma(1+t).
\]
Because the added terms depend only on $t$,
\[
D_x\overline b_\sigma=D_x\underline b_\sigma=Dw^0,
\qquad
D_x^2\overline b_\sigma=D_x^2\underline b_\sigma=D^2w^0,
\]
whereas
\[
\partial_t\overline b_\sigma=C_0+\sigma,
\qquad
\partial_t\underline b_\sigma=-C_0-\sigma.
\]
Consequently, using $|f|\le M$ and the definition of $B_0$,
\begin{align*}
\partial_t\overline b_\sigma
+\cL^{\eps,\nu}(x,D_x\overline b_\sigma,D_x^2\overline b_\sigma)-f
&\ge C_0+\sigma-B_0-M=1+\sigma,\\
\partial_t\underline b_\sigma
+\cL^{\eps,\nu}(x,D_x\underline b_\sigma,D_x^2\underline b_\sigma)-f
&\le -C_0-\sigma+B_0+M=-1-\sigma.
\end{align*}
Thus $\overline b_\sigma$ is a strict supersolution and
$\underline b_\sigma$ is a strict subsolution. At $t=0$,
\[
\underline b_\sigma(0,x)=w^0(x)-\sigma
\le w^0(x)\le
w^0(x)+\sigma=\overline b_\sigma(0,x).
\]
Moreover, since $w^0=0$ on $\partial\Omega$,
\[
\underline b_\sigma(t,y)=-C_0t-\sigma(1+t)
\le0\le C_0t+\sigma(1+t)=\overline b_\sigma(t,y)
\]
for every $(t,y)\in(0,T]\times\partial\Omega$. Comparison therefore yields
\[
w^0(x)-C_0t-\sigma(1+t)
\le w^{\eps,\nu}(t,x)
\le w^0(x)+C_0t+\sigma(1+t).
\]
Setting $\sigma=1/n$ for every $n\in\mathbb N$ and using $1/n\to0$ gives
\[
\|w^{\eps,\nu}(t,\cdot)-w^0\|_{L^\infty(\Omega)}\le C_0t.
\]
\end{proof}

\begin{proof}[Proof of Proposition~\ref{prop:direct-zero-gradient-limits}]
For fixed \((x,X)\), define
\[
\mathcal A_{x,X}
=
\left\{
-G(x)\operatorname{tr}X+G(x)r\,e\cdot Xe:
0\le r\le1,
\ e\in\mathbb S^{d-1}
\right\}.
\]
Let
\[
(\eps_k,\nu_k,y_k,p_k,Y_k)
\to(0,0,x,0,X).
\]
If \(p_k\neq0\), write \(p_k=|p_k|e_k\) and set
\[
r_k=\frac{|p_k|^2}{\eps_k^2+|p_k|^2};
\]
if \(p_k=0\), set \(r_k=0\). After extraction of a subsequence,
\(r_k\to r\in[0,1]\); if \(r>0\), a further subsequence satisfies
\(e_k\to e\in\mathbb S^{d-1}\). Since
\[
G(y_k)\to G(x),
\qquad
Y_k\to X,
\qquad
\nu_k\operatorname{tr}Y_k\to0,
\qquad
DG(y_k)\cdot p_k\to0,
\]
every subsequential limit of
\(\cL^{\eps_k,\nu_k}(y_k,p_k,Y_k)\) belongs to
\(\mathcal A_{x,X}\). Therefore
\[
\limsup_{(\eps,\nu,y,p,Y)\to(0,0,x,0,X)}
\cL^{\eps,\nu}(y,p,Y)
\le\max\mathcal A_{x,X},
\]
\[
\liminf_{(\eps,\nu,y,p,Y)\to(0,0,x,0,X)}
\cL^{\eps,\nu}(y,p,Y)
\ge\min\mathcal A_{x,X}.
\]

Conversely, fix \(r_0\in[0,1]\) and \(e\in\mathbb S^{d-1}\), and take
\(y_k=x\), \(Y_k=X\), and \(\nu_k=k^{-1}\). If \(0<r_0<1\), set
\[
\eps_k=k^{-1},
\qquad
p_k=\eps_k\sqrt{\frac{r_0}{1-r_0}}\,e.
\]
If \(r_0=0\), set \(\eps_k=k^{-1}\) and \(p_k=\eps_k^2e\). If
\(r_0=1\), set \(\eps_k=k^{-2}\) and \(p_k=k^{-1}e\). In each case,
\[
\frac{|p_k|^2}{\eps_k^2+|p_k|^2}\to r_0,
\]
and the corresponding operator values converge to
\[
-G(x)\operatorname{tr}X+G(x)r_0\,e\cdot Xe.
\]
Thus every element of \(\mathcal A_{x,X}\) is a joint limiting value, and
the upper and lower limits are \(\max\mathcal A_{x,X}\) and
\(\min\mathcal A_{x,X}\), respectively. By the Rayleigh quotient
characterization of the extremal eigenvalues,
\[
\max_{e\in\mathbb S^{d-1}}e\cdot Xe=\lambda_{\max}(X),
\qquad
\min_{e\in\mathbb S^{d-1}}e\cdot Xe=\lambda_{\min}(X).
\]
Since \(G(x)>0\), it follows that
\begin{align*}
\max\mathcal A_{x,X}
&=-G(x)\operatorname{tr}X
  +G(x)\max_{0\le r\le1}r\lambda_{\max}(X)\\
&=-G(x)\operatorname{tr}X
  +G(x)\max\{0,\lambda_{\max}(X)\},\\
\min\mathcal A_{x,X}
&=-G(x)\operatorname{tr}X
  +G(x)\min_{0\le r\le1}r\lambda_{\min}(X)\\
&=-G(x)\operatorname{tr}X
  +G(x)\min\{0,\lambda_{\min}(X)\}.
\end{align*}
These identities are precisely \eqref{eq:raw-lower}--\eqref{eq:raw-upper}.
Finally,
\[
\min\{0,\lambda_{\min}(X)\}
\le\lambda_{\min}(X)
\le\lambda_{\max}(X)
\le\max\{0,\lambda_{\max}(X)\},
\]
which proves \eqref{eq:limiting-value-order}.
\end{proof}

\begin{remark}[Why direct operator stability is insufficient]\label{rem:raw-not-selection}
The direct upper and lower limiting values in Proposition~\ref{prop:direct-zero-gradient-limits} need not coincide with the geometric limiting extensions in
Definition~\ref{def:viscosity}. The discrepancy depends on the Hessian: the first inequality in
\eqref{eq:limiting-value-order} is strict when $X$ is positive definite, while the last is strict
when $X$ is negative definite. If
$\lambda_{\min}(X)\le0\le\lambda_{\max}(X)$, including $X=0$, the direct and geometric
bounds agree. For a local extremum associated with a smooth function $\varphi$ satisfying
$D_x\varphi=0$ and with positive- or negative-definite $D_x^2\varphi$, direct application of the
half-relaxed-limit inequalities produces $\cJlower(x,0,D_x^2\varphi)$ or
$\cJupper(x,0,D_x^2\varphi)$, rather than $\cLgeoLower(x,0,D_x^2\varphi)$ or
$\cLgeoUpper(x,0,D_x^2\varphi)$. The convergence proof therefore establishes the inequalities
specified in Definition~\ref{def:vanishing-derivative-formulation} and then applies
Proposition~\ref{prop:vanishing-derivative-equivalence} to obtain the geometric viscosity
inequalities for every symmetric Hessian.
\end{remark}

\begin{lemma}[Uniform small-derivative control for the regularization]
\label{lem:small-derivative-control}
For every \(\overline\nu>0\), there is a constant \(C_{\overline\nu}\),
independent of \(\eps>0\) and \(0<\nu\le\overline\nu\), such that
\begin{equation}\label{eq:uniform-small-derivative-bound}
\left|\cL^{\eps,\nu}(x,p,X)\right|
\le C_{\overline\nu}\bigl(|p|+\|X\|\bigr)
\qquad
(x,p,X)\in\overline\Omega\times\R^d\times\mathbb S^d.
\end{equation}
Consequently, if $(x_n,p_n,X_n)\to(x_0,0,0)$ and
$(\eps_n,\nu_n)$ satisfies $0<\eps_n,\nu_n$ and
$(\eps_n,\nu_n)\to(0,0)$, then
\[
\cL^{\eps_n,\nu_n}(x_n,p_n,X_n)\longrightarrow0.
\]
\end{lemma}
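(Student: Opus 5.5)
The plan is to estimate each of the three terms in \eqref{eq:regularized-operator} separately by the triangle inequality. We never divide by \(|p|\), so the estimate holds uniformly across \(p=0\).

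The first term is the viscosity part. For \(0<\nu\le\overline\nu\) it satisfies
\[
|\nu\tr X|\le d\,\overline\nu\,\|X\|.
\]
The second term is the curvature part. We use \(|Xp\cdot p|\le\|X\|\,|p|^2\) together with \(|p|^2/(\eps^2+|p|^2)\le1\); the latter holds for every \(\eps>0\) and every \(p\), including \(p=0\). This gives
\[
\left|G(x)\left(\tr X-\frac{Xp\cdot p}{\eps^2+|p|^2}\right)\right|\le g^*(d+1)\|X\|,
\]
uniformly in \(\eps\). The third term is the drift. By (A2), \(\nabla G\) is continuous on the compact set \(\overline\Omega\), so
\[
|\nabla G(x)\cdot p|\le\|\nabla G\|_{L^\infty(\Omega)}|p|.
\]
Collecting the three bounds, we take
\[
C_{\overline\nu}:=\max\bigl\{d\,\overline\nu+(d+1)g^*,\ \|\nabla G\|_{L^\infty(\Omega)}\bigr\}.
\]
This constant depends only on \(\overline\nu\), \(d\), \(g^*\) and \(G\), and yields \eqref{eq:uniform-small-derivative-bound}.

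For the consequence, we use that \(\nu_n\to0\), so there is \(\overline\nu\) (for instance \(\overline\nu=\sup_n\nu_n<\infty\)) with \(0<\nu_n\le\overline\nu\) for all \(n\). Applying \eqref{eq:uniform-small-derivative-bound} with this \(\overline\nu\) gives
\[
|\cL^{\eps_n,\nu_n}(x_n,p_n,X_n)|\le C_{\overline\nu}(|p_n|+\|X_n\|)\to0.
\]
The convergence of \(x_n\) and \(\eps_n\) plays no role beyond membership in \(\overline\Omega\) and positivity. There is no genuine obstacle here. The only point requiring care is the middle term, where the uniform bound on the quotient \(|p|^2/(\eps^2+|p|^2)\) must be invoked rather than any division by \(|p|\). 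This quotient bound is what makes the estimate insensitive to the relative rates of \(\eps_n\) and \(\nu_n\), in contrast with the \(X\neq0\) situation of Proposition~\ref{prop:direct-zero-gradient-limits}.
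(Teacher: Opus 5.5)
Your proposal is correct and follows the paper's proof essentially verbatim. Both bound the three terms of \eqref{eq:regularized-operator} separately, using $|p|^2/(\eps^2+|p|^2)\le1$ (so $|Xp\cdot p|/(\eps^2+|p|^2)\le\|X\|$), $|\tr X|\le d\|X\|$, $G\le g^*$, and boundedness of $\nabla G$, which gives $C_{\overline\nu}$ built from $d\overline\nu+(d+1)g^*$ and $\|\nabla G\|_{L^\infty}$. The sequential consequence then follows, as in the paper, by choosing $\overline\nu\ge\sup_n\nu_n$.
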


\begin{proof}
Define
\[
Q_\eps(p):=\frac{p\otimes p}{\eps^2+|p|^2}.
\]
For every \(\xi\in\mathbb R^d\),
\[
|Q_\eps(p)\xi|
=\frac{|p|\,|p\cdot\xi|}{\eps^2+|p|^2}
\le\frac{|p|^2}{\eps^2+|p|^2}|\xi|
\le|\xi|.
\]
Consequently,
\[
\|Q_\eps(p)\|_{\mathrm{op}}
=\frac{|p|^2}{\eps^2+|p|^2}\le1,
\qquad
\left|\frac{Xp\cdot p}{\eps^2+|p|^2}\right|
=|X:Q_\eps(p)|\le\|X\|.
\]
Using \(0<\nu\le\overline\nu\), \(|\operatorname{tr}X|\le d\|X\|\), and
\(0<G(x)\le g^*\) in \eqref{eq:regularized-operator}, we obtain
\[
\left|\cL^{\eps,\nu}(x,p,X)\right|
\le \bigl(d\overline\nu+(d+1)g^*\bigr)\|X\|
+\|\nabla G\|_{L^\infty}|p|.
\]
Thus \eqref{eq:uniform-small-derivative-bound} holds. For a sequence \(\nu_n\to0\), choose
\(\overline\nu\ge\sup_n\nu_n\). If
\((x_n,p_n,X_n)\to(x_0,0,0)\), the right-hand side converges to zero;
hence
\[
\cL^{\eps_n,\nu_n}(x_n,p_n,X_n)\longrightarrow0.
\]
\end{proof}

\begin{lemma}[Equivalence of the geometric and vanishing-derivative formulations]
\label{lem:continuous-inhomogeneity-vanishing-derivative-equivalence}
Let \(\mathfrak G(x,p,X)\) be continuous for \(p\neq0\) and degenerate
elliptic in the sense that
\[
X\ge Y
\quad\Longrightarrow\quad
\mathfrak G(x,p,X)\le\mathfrak G(x,p,Y)
\qquad(p\neq0).
\]
Assume
\begin{equation}\label{eq:principal-small-derivative}
\lim_{(p,X)\to(0,0)}
\sup_{x\in\overline\Omega}|\mathfrak G(x,p,X)|=0,
\end{equation}
and suppose that
\[
\mathfrak G_*(x,0,X)
=
\liminf_{\substack{(y,p,Y)\to(x,0,X)\\p\neq0}}
\mathfrak G(y,p,Y),
\qquad
\mathfrak G^*(x,0,X)
=
\limsup_{\substack{(y,p,Y)\to(x,0,X)\\p\neq0}}
\mathfrak G(y,p,Y)
\]
are finite for every \((x,X)\in\overline\Omega\times\mathbb S^d\). Let
\(a\in C([0,T]\times\overline\Omega)\). Then the geometric viscosity
formulation of
\[
\partial_t v+\mathfrak G(x,Dv,D^2v)=a(t,x)
\qquad\text{in }Q_T
\]
is equivalent to the following vanishing-derivative formulation. If
\(v-\varphi\) has a local maximum at \(z_0\), impose
\[
\partial_t\varphi(z_0)
+\mathfrak G\bigl(x_0,D_x\varphi(z_0),D_x^2\varphi(z_0)\bigr)
\le a(z_0)
\]
when \(D_x\varphi(z_0)\neq0\), and impose
\[
\partial_t\varphi(z_0)\le a(z_0)
\]
when
\(D_x\varphi(z_0)=0\) and \(D_x^2\varphi(z_0)=0\). The supersolution
conditions are obtained by replacing local maxima by local minima and
reversing the inequalities.
\end{lemma}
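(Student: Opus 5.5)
The implication from the geometric formulation to the vanishing-derivative one is immediate, so the work lies in the converse. For the immediate direction: if \(D_x\varphi(z_0)\neq0\), both extensions coincide with \(\mathfrak G\). If \(D_x\varphi(z_0)=0\) and \(D_x^2\varphi(z_0)=0\), hypothesis \eqref{eq:principal-small-derivative} gives \(\mathfrak G_*(x_0,0,0)=\mathfrak G^*(x_0,0,0)=0\), and the geometric inequality reduces to \(\partial_t\varphi(z_0)\le a(z_0)\) (respectively \(\ge\)).

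For the converse I treat subsolutions; supersolutions follow by the symmetric argument with maxima and minima exchanged and \(\mathfrak G^*\) in place of \(\mathfrak G_*\). Let \(v-\varphi\) have a local maximum at \(z_0=(t_0,x_0)\) with \(D_x\varphi(z_0)=0\). The nonzero-gradient case is already covered, and \(D_x^2\varphi(z_0)\) may be arbitrary. After replacing \(\varphi\) by \(\varphi+|x-x_0|^4+|t-t_0|^2\), which changes none of the derivatives at \(z_0\) that enter the inequality, I may assume the maximum is strict on a compact neighbourhood \(\overline C\subset Q_T\). I would then use the Barles--Georgelin doubling in space only:
\[
\Phi_\alpha(t,x,y)=v(t,x)-\varphi(t,y)-\frac{|x-y|^4}{4\alpha}.
\]
Let \((t_\alpha,x_\alpha,y_\alpha)\) be a maximum point over \(\overline C\times\overline C\). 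Standard penalization arguments, using the strictness of the maximum and the upper semicontinuity of \(v\), give \((t_\alpha,x_\alpha,y_\alpha)\to(t_0,x_0,x_0)\) and \(|x_\alpha-y_\alpha|^4/\alpha\to0\), so these points are interior for small \(\alpha\). The first-order condition in \(y\) gives \(D_x\varphi(t_\alpha,y_\alpha)=p_\alpha:=|x_\alpha-y_\alpha|^2(x_\alpha-y_\alpha)/\alpha\).

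\emph{Case \(x_\alpha\neq y_\alpha\).} Freeze the shift \(h_\alpha=x_\alpha-y_\alpha\). Then \((t,x)\mapsto v(t,x)-\varphi(t,x-h_\alpha)\) has a local maximum at \((t_\alpha,x_\alpha)\), because \(\Phi_\alpha(t,x,x-h_\alpha)\ge\Phi_\alpha(t_\alpha,x_\alpha,y_\alpha)\) and the quartic term is constant along this slice. The test function \(\varphi(\cdot,\cdot-h_\alpha)\) has spatial gradient \(p_\alpha\neq0\), Hessian \(D_x^2\varphi(t_\alpha,y_\alpha)\), and time derivative \(\partial_t\varphi(t_\alpha,y_\alpha)\). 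The nonzero-gradient condition therefore yields
\[
\partial_t\varphi(t_\alpha,y_\alpha)+\mathfrak G\bigl(x_\alpha,p_\alpha,D_x^2\varphi(t_\alpha,y_\alpha)\bigr)\le a(t_\alpha,x_\alpha).
\]
If this case occurs along a sequence \(\alpha\to0\), then \((x_\alpha,p_\alpha,D_x^2\varphi(t_\alpha,y_\alpha))\to(x_0,0,D_x^2\varphi(z_0))\) with \(p_\alpha\neq0\). Taking the liminf and using the definition of \(\mathfrak G_*\) and the continuity of \(a\) gives the geometric inequality.

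\emph{Case \(x_\alpha=y_\alpha\).} Then \(x\mapsto v(t,x)-\varphi(t,y_\alpha)-|x-y_\alpha|^4/(4\alpha)\) is tested at \((t_\alpha,x_\alpha)\) by a function with zero spatial gradient and zero spatial Hessian. The vanishing-derivative condition gives \(\partial_t\varphi(t_\alpha,x_\alpha)\le a(t_\alpha,x_\alpha)\). Moreover, \(y\mapsto\varphi(t_\alpha,y)+|x_\alpha-y|^4/(4\alpha)\) is minimal at \(y=x_\alpha\), hence \(D_x\varphi(t_\alpha,x_\alpha)=0\) and \(D_x^2\varphi(t_\alpha,x_\alpha)\ge0\); passing to the limit gives \(D_x^2\varphi(z_0)\ge0\). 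It then remains to show \(\mathfrak G_*(x_0,0,Y)\le0\) for \(Y\ge0\). For this, take \(p_k\neq0\), \(p_k\to0\), and \(Y_k=Y/k\); degenerate ellipticity gives \(\mathfrak G(x_0,p_k,Y)\le\mathfrak G(x_0,p_k,Y/k)\to0\) by \eqref{eq:principal-small-derivative}, and the liminf defining \(\mathfrak G_*\) is bounded by this sequence. Combining the two facts gives \(\partial_t\varphi(z_0)+\mathfrak G_*(x_0,0,D_x^2\varphi(z_0))\le a(z_0)\).

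The step I expect to be delicate is the \(x_\alpha\neq y_\alpha\) case. One must test with the translated function \(\varphi(\cdot-h_\alpha)\) rather than with the quartic penalization: the quartic's Hessian only satisfies an inequality in the wrong direction for degenerate ellipticity, while the translation transfers exactly \(D_x^2\varphi(t_\alpha,y_\alpha)\). A second point of care is the convergence \((t_\alpha,x_\alpha,y_\alpha)\to(t_0,x_0,x_0)\) for merely upper-semicontinuous \(v\), which is where strictness of the maximum is used. Finally, the case split must be made along a subsequence in which one of the two cases occurs infinitely often.
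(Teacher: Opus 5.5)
Your argument is correct and follows the paper's proof essentially step for step: the quartic spatial doubling, the translated test function \(\varphi(\cdot,\cdot-h_\alpha)\) when \(x_\alpha\neq y_\alpha\) (equivalently \(D_x\varphi(t_\alpha,y_\alpha)\neq0\)), and otherwise the zero-derivative quartic test together with \(D_x^2\varphi(t_\alpha,x_\alpha)\ge0\) and \(\mathfrak G_*(x_0,0,X_0)\le\mathfrak G_*(x_0,0,0)=0\); your explicit \(Y/k\) sequence simply spells out the paper's remark that degenerate ellipticity passes to \(\mathfrak G_*\). One small slip: the local-maximum claim in the case \(x_\alpha\neq y_\alpha\) rests on \(\Phi_\alpha(t,x,x-h_\alpha)\le\Phi_\alpha(t_\alpha,x_\alpha,y_\alpha)\), not \(\ge\).
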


\begin{proof}
Condition \eqref{eq:principal-small-derivative} implies
\[
\mathfrak G_*(x,0,0)=\mathfrak G^*(x,0,0)=0.
\]
Hence every geometric viscosity subsolution or supersolution satisfies the conditions in Definition~\ref{def:vanishing-derivative-formulation}.

Conversely, let \(u\) satisfy the subsolution conditions in Definition~\ref{def:vanishing-derivative-formulation}, and let
\(\varphi\in C^{1,2}\) be such that \(u-\varphi\) has a local maximum at
\(z_0=(t_0,x_0)\), with
\[
D_x\varphi(z_0)=0,
\qquad
X_0=D_x^2\varphi(z_0).
\]
After replacing \(\varphi\) by
\(\varphi+|x-x_0|^4+|t-t_0|^2\) with an arbitrarily small coefficient, the
maximum may be assumed strict. For \(\delta>0\), maximize
\[
\Psi_\delta(x,y,t)
=
u(x,t)-\frac{|x-y|^4}{4\delta}-\varphi(y,t)
\]
in a compact neighborhood of \((x_0,x_0,t_0)\), and denote a maximizing
triple by \((x_\delta,y_\delta,t_\delta)\). Then
\[
(x_\delta,y_\delta,t_\delta)\to(x_0,x_0,t_0).
\]
The first- and second-order necessary conditions with respect to \(y\) are
\begin{align}
-\frac{|x_\delta-y_\delta|^2}{\delta}(x_\delta-y_\delta)
+D_x\varphi(y_\delta,t_\delta)&=0,
\label{eq:caseB-gradient}\\
\frac{|x_\delta-y_\delta|^2}{\delta}I
+\frac{2}{\delta}(x_\delta-y_\delta)\otimes(x_\delta-y_\delta)
+D_x^2\varphi(y_\delta,t_\delta)&\ge0.
\label{eq:caseB-hessian}
\end{align}
Moreover,
\[
\chi_\delta(x,t)
=
\frac{|x_\delta-y_\delta|^4}{4\delta}
+
\varphi\bigl(x-(x_\delta-y_\delta),t\bigr)
\]
satisfies that \(u-\chi_\delta\) has a local maximum at
\((x_\delta,t_\delta)\).

If \(D_x\varphi(y_\delta,t_\delta)\neq0\) along a subsequence, the nonzero-gradient inequality in Definition~\ref{def:vanishing-derivative-formulation} applied to \(\chi_\delta\) gives
\[
\partial_t\varphi(y_\delta,t_\delta)
+
\mathfrak G\bigl(x_\delta,D_x\varphi(y_\delta,t_\delta),
D_x^2\varphi(y_\delta,t_\delta)\bigr)
\le a(t_\delta,x_\delta).
\]
Taking the lower limit yields
\[
\partial_t\varphi(z_0)+\mathfrak G_*(x_0,0,X_0)
\le a(z_0).
\]

Otherwise, along a subsequence,
\(D_x\varphi(y_\delta,t_\delta)=0\). Equation
\eqref{eq:caseB-gradient} then implies \(x_\delta=y_\delta\). The function
\[
\psi_\delta(x,t)
=
\frac{|x-y_\delta|^4}{4\delta}+
\varphi(y_\delta,t)
\]
satisfies that \(u-\psi_\delta\) has a local maximum at
\((x_\delta,t_\delta)\), and its spatial gradient and Hessian vanish there.
The zero-gradient condition in Definition~\ref{def:vanishing-derivative-formulation} therefore gives
\[
\partial_t\varphi(y_\delta,t_\delta)
\le a(t_\delta,x_\delta).
\]
In addition, \eqref{eq:caseB-hessian} gives
\(D_x^2\varphi(y_\delta,t_\delta)\ge0\), and hence \(X_0\ge0\). Degenerate
ellipticity passes to the lower limiting extension, so
\[
\mathfrak G_*(x_0,0,X_0)
\le
\mathfrak G_*(x_0,0,0)=0.
\]
The convergences
\[
(y_\delta,t_\delta)\to z_0,
\qquad
(t_\delta,x_\delta)\to z_0,
\]
together with the continuity of \(a\) and \(\partial_t\varphi\), imply
\[
\partial_t\varphi(z_0)\le a(z_0).
\]
Combining this inequality with
\(\mathfrak G_*(x_0,0,X_0)\le0\) gives
\[
\partial_t\varphi(z_0)+\mathfrak G_*(x_0,0,X_0)
\le a(z_0).
\]
Every sequence \(\delta_k\downarrow0\) has a subsequence of one of these
two types; therefore the geometric subsolution inequality holds.

For supersolutions, apply the subsolution result to \(\widetilde u=-u\) and
\[
\widetilde{\mathfrak G}(x,p,X)
=-\mathfrak G(x,-p,-X).
\]
Since
\[
\widetilde{\mathfrak G}_*(x,0,-X)
=-\mathfrak G^*(x,0,X),
\]
the transformed subsolution inequality is exactly the geometric
supersolution inequality for \(u\).
\end{proof}

\begin{proof}[Proof of Proposition~\ref{prop:vanishing-derivative-equivalence}]
Apply Lemma~\ref{lem:continuous-inhomogeneity-vanishing-derivative-equivalence} with
$\mathfrak G=\cL$ and $a=f$.  Lemma~\ref{lem:structural-comparison} gives continuity
away from $p=0$ and degenerate ellipticity.  Its estimate
\[
 |\cL(x,p,X)|\le(d+1)g^*\|X\|
 +\|\nabla G\|_{L^\infty}|p|
\]
is uniform in $x$ and verifies \eqref{eq:principal-small-derivative}; the finite geometric limiting extensions at zero gradient
are exactly \eqref{eq:Lsubstar}--\eqref{eq:Lstar}.  Thus the vanishing-derivative and geometric viscosity formulations agree for the spatially inhomogeneous weighted
operator with an arbitrary continuous inhomogeneous term.
\end{proof}

\begin{remark}[Vanishing gradient with nonzero Hessian]
\label{rem:nonzero-hessian-zero-gradient-coverage}
Let $u-\varphi$ have a local maximum at $z_0$ and suppose that
\[
D_x\varphi(z_0)=0,
\qquad
D_x^2\varphi(z_0)\ne0.
\]
Let $(x_\delta,y_\delta,t_\delta)$ be the maximizing triples introduced in the proof of
Lemma~\ref{lem:continuous-inhomogeneity-vanishing-derivative-equivalence}. If
$D_x\varphi(y_\delta,t_\delta)\neq0$ along a sequence $\delta_k\downarrow0$, then the
corresponding subsolution inequalities and the definition of $\mathfrak G_*$ give the geometric
subsolution inequality at $z_0$. If
$D_x\varphi(y_\delta,t_\delta)=0$ along a sequence $\delta_k\downarrow0$, then
\eqref{eq:caseB-gradient} gives $x_\delta=y_\delta$, while
\eqref{eq:caseB-hessian} gives $D_x^2\varphi(y_\delta,t_\delta)\ge0$. Hence
$D_x^2\varphi(z_0)\ge0$, and degenerate ellipticity gives
\[
\mathfrak G_*(x_0,0,D_x^2\varphi(z_0))
\le \mathfrak G_*(x_0,0,0)=0.
\]
The supersolution statement follows after replacing local maxima by local minima and
$\mathfrak G_*$ by $\mathfrak G^*$. Consequently,
\eqref{eq:principal-small-derivative} determines the operator at $(p,X)=(0,0)$,
whereas \eqref{eq:Lsubstar}--\eqref{eq:Lstar} determine the viscosity inequalities
when $p=0$ and $X\ne0$.
\end{remark}

\begin{lemma}[Stability with separate treatment of the principal and inhomogeneous terms]
\label{lem:separate-term-vanishing-derivative-stability}
Fix $T>0$.  Let $(\eps_n,\nu_n)$ satisfy
$0<\eps_n,\nu_n$ and $(\eps_n,\nu_n)\to(0,0)$, and let
$f_n\to f$ uniformly on $[0,T]\times\overline\Omega$.  Assume that the
sequence is locally uniformly bounded:
\[
 \sup_n\|w_n\|_{L^\infty(K)}<\infty
 \qquad\text{for every compact }K\subset Q_T.
\]
If $w_n$ are viscosity subsolutions of
\[
\partial_t w_n+\cL^{\eps_n,\nu_n}(x,Dw_n,D^2w_n)=f_n,
\]
then $\overline w=\limsup^*w_n$ is a subsolution in the vanishing-derivative formulation for the limiting
geometric equation.  If $w_n$ are viscosity supersolutions, then
$\underline w=\liminf_*w_n$ is a supersolution in the vanishing-derivative formulation.
\end{lemma}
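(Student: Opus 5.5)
We follow the standard half-relaxed-limit stability argument, but verify only the two inequalities of Definition~\ref{def:vanishing-derivative-formulation}. We never compare with $\cLgeoLower$ at a zero gradient with nonzero Hessian; that case is recovered afterwards from Proposition~\ref{prop:vanishing-derivative-equivalence}. We treat the subsolution case; the supersolution case follows by the same argument with reversed inequalities, or by passing to $-w_n$, $-f_n$ and $\widetilde{\cL}^{\eps,\nu}(x,p,X)=-\cL^{\eps,\nu}(x,-p,-X)=\cL^{\eps,\nu}(x,p,X)$. This works because $\cL^{\eps,\nu}$ is odd in $(p,X)$.

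\emph{Step 1 (localization).} Fix $\widehat z\in Q_T$ and $\varphi\in C^{1,2}$ such that $\overline w-\varphi$ has a local maximum at $\widehat z$. We make the maximum strict by adding $|x-\widehat x|^4+|t-\widehat t|^2$. This perturbation leaves $\partial_t\varphi$, $D_x\varphi$ and $D_x^2\varphi$ unchanged at $\widehat z$, so the inequalities to be proved are unaffected. Local uniform boundedness of $(w_n)$ and the definition of $\limsup^*$ then give, along a subsequence, points $z_n=(t_n,x_n)\to\widehat z$ with $w_n(z_n)\to\overline w(\widehat z)$ such that $w_n-\varphi$ has a local maximum at $z_n$. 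This is the usual argument of \cite[Lemma~6.1]{CrandallIshiiLions1992}. The subsolution property of $w_n$ yields
\[
\partial_t\varphi(z_n)+\cL^{\eps_n,\nu_n}\bigl(x_n,D_x\varphi(z_n),D_x^2\varphi(z_n)\bigr)\le f_n(z_n).
\]
Since $f_n\to f$ uniformly and $f$ is continuous, $f_n(z_n)\to f(\widehat z)$.

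\emph{Step 2 (nonzero gradient).} Suppose $p_0:=D_x\varphi(\widehat z)\ne0$. Then for large $n$ we have $|D_x\varphi(z_n)|\ge|p_0|/2$ and $\|D_x^2\varphi(z_n)\|\le B$. Lemma~\ref{lem:away-zero-gradient-convergence} bounds the difference between $\cL^{\eps_n,\nu_n}$ and $\cL$ at these arguments by $B(d\nu_n+4g^*\eps_n^2/|p_0|^2)\to0$. By continuity of $\cL$ on $\{p\ne0\}$ (Lemma~\ref{lem:structural-comparison}), letting $n\to\infty$ gives
\[
\partial_t\varphi(\widehat z)+\cL\bigl(\widehat x,p_0,D_x^2\varphi(\widehat z)\bigr)\le f(\widehat z),
\]
which is the required inequality.

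\emph{Step 3 (vanishing gradient and Hessian).} Suppose $D_x\varphi(\widehat z)=0$ and $D_x^2\varphi(\widehat z)=0$. Then $(x_n,D_x\varphi(z_n),D_x^2\varphi(z_n))\to(\widehat x,0,0)$. Lemma~\ref{lem:small-derivative-control} gives $\cL^{\eps_n,\nu_n}(x_n,D_x\varphi(z_n),D_x^2\varphi(z_n))\to0$, uniformly in the path of $(\eps_n,\nu_n)$, so in the limit $\partial_t\varphi(\widehat z)\le f(\widehat z)$. These two cases are exactly what Definition~\ref{def:vanishing-derivative-formulation} requires.

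\emph{Main obstacle.} The delicate point is that the case $D_x\varphi=0$ with $D_x^2\varphi\ne0$ must not be handled directly. By Proposition~\ref{prop:direct-zero-gradient-limits}, a direct limit there would only give $\cJlower$ rather than $\cLgeoLower$. So the argument must restrict itself to the two cases above and must ensure the strictification does not alter the derivatives at $\widehat z$, which the quartic-plus-quadratic penalization achieves. A second point needing care is the localization in Step 1 when $\widehat z$ is close to $\partial_P Q_T$. Since $\widehat z\in Q_T$ is interior, we can choose a compact neighborhood inside $Q_T$, on which local boundedness applies.
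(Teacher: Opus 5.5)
Your proposal is correct and follows essentially the same route as the paper. Both arguments reduce the supersolution case to the subsolution case using the oddness of $\cL^{\eps,\nu}$, make the maximum strict by adding a quartic-in-$x$, quadratic-in-$t$ term, apply the maximum-point lemma, and then treat the two admissible cases with Lemma~\ref{lem:away-zero-gradient-convergence} and Lemma~\ref{lem:small-derivative-control}, respectively.
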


\begin{proof}
It suffices to prove the subsolution assertion. The identity
\[
\cL^{\eps,\nu}(x,-p,-X)=-\cL^{\eps,\nu}(x,p,X)
\]
shows that \(-w_n\) is a subsolution with inhomogeneity \(-f_n\) whenever
\(w_n\) is a supersolution with inhomogeneity \(f_n\). Applying the
subsolution result to \(-w_n\) therefore yields the supersolution assertion.

Let \(O\subset Q_T\) be open, with \(\overline O\) compact and
\(\overline O\subset Q_T\). Let \(z_0=(t_0,x_0)\in O\), and suppose that
\(\overline w-\varphi\) has a local maximum at \(z_0\), where
\(\varphi\in C^{1,2}(O)\) satisfies one of the two alternatives in the
vanishing-derivative formulation. Choose a parabolic neighborhood \(\mathcal P\) of \(z_0\) such that
\(\overline{\mathcal P}\) is compact and \(\overline{\mathcal P}\subset O\).
For \(\delta>0\), define
\[
\varphi_\delta(t,x)
=
\varphi(t,x)+\delta\bigl(|x-x_0|^4+(t-t_0)^2\bigr).
\]
After reducing \(\mathcal P\), the function
\(\overline w-\varphi_\delta\) has a strict maximum at \(z_0\) relative
to \(\overline{\mathcal P}\). The maximum-point lemma yields
\(z_{n,\delta}\in\mathcal P\) and constants \(c_{n,\delta}\) such that
\[
w_n-(\varphi_\delta+c_{n,\delta})
\]
has a local maximum at \(z_{n,\delta}\), and
\(z_{n,\delta}\to z_0\) as \(n\to\infty\), for fixed \(\delta\).

Assume first that \(D_x\varphi(z_0)\neq0\). Set
\(\rho=|D_x\varphi(z_0)|/2\). For all sufficiently small \(\delta\) and
large \(n\),
\[
|D_x\varphi_\delta(z_{n,\delta})|\ge\rho,
\]
and the Hessians \(D_x^2\varphi_\delta(z_{n,\delta})\) remain bounded.
For fixed \(\delta>0\), the viscosity inequality at
\(z_{n,\delta}\), Lemma~\ref{lem:away-zero-gradient-convergence}, and the
uniform convergence \(f_n\to f\) imply
\[
\partial_t\varphi_\delta(z_0)
+
\cL\bigl(x_0,D_x\varphi_\delta(z_0),
D_x^2\varphi_\delta(z_0)\bigr)
\le f(z_0).
\]
Because
\[
\partial_t\varphi_\delta(z_0)=\partial_t\varphi(z_0),
\qquad
D_x\varphi_\delta(z_0)=D_x\varphi(z_0),
\qquad
D_x^2\varphi_\delta(z_0)=D_x^2\varphi(z_0),
\]
this is exactly
\[
\partial_t\varphi(z_0)
+
\cL\bigl(x_0,D_x\varphi(z_0),D_x^2\varphi(z_0)\bigr)
\le f(z_0).
\]

Assume next that
\[
D_x\varphi(z_0)=0,
\qquad
D_x^2\varphi(z_0)=0.
\]
For fixed \(\delta\),
\[
D_x\varphi_\delta(z_{n,\delta})\to0,
\qquad
D_x^2\varphi_\delta(z_{n,\delta})
\to D_x^2\varphi_\delta(z_0)=0
\]
as \(n\to\infty\). Lemma~\ref{lem:small-derivative-control} implies
\[
\cL^{\eps_n,\nu_n}
\bigl(x_{n,\delta},D_x\varphi_\delta(z_{n,\delta}),
D_x^2\varphi_\delta(z_{n,\delta})\bigr)
\longrightarrow0.
\]
For fixed \(\delta>0\), the viscosity inequality for \(w_n\), the
convergence of the regularized operator term to zero, and
\(f_n(z_{n,\delta})\to f(z_0)\) give
\[
\partial_t\varphi_\delta(z_0)\le f(z_0).
\]
Since
\(\partial_t\varphi_\delta(z_0)=\partial_t\varphi(z_0)\),
\[
\partial_t\varphi(z_0)\le f(z_0).
\]
The two alternatives prove the reduced subsolution inequalities.
\end{proof}

\begin{lemma}[Vanishing-regularization selection for the present model]\label{lem:vanishing-regularization-selection}
Let $(\eps_n,\nu_n)$ satisfy $0<\eps_n,\nu_n$ and
$(\eps_n,\nu_n)\to(0,0)$, let $f_n,f\in C([0,T]\times\overline\Omega)$ satisfy
\[
-M\le f_n,f\le0,\qquad f_n=f=0\ \text{on }(0,T]\times\partial\Omega,
\qquad \norm{f_n-f}_{L^\infty}\longrightarrow0,
\]
and let $w_n$ be the Dirichlet viscosity solution of
\begin{equation}\label{eq:regularized-inhomogeneous}
\begin{aligned}
\partial_tw_n
-\nu_n\Delta w_n
-A_{\eps_n}(\nabla w_n)\operatorname{div}\!\left(
G\frac{\nabla w_n}{A_{\eps_n}(\nabla w_n)}\right)
&=f_n&&\text{in }(0,T)\times\Omega,\\
w_n&=0&&\text{on }(0,T)\times\partial\Omega,\\
w_n(0,\cdot)&=w^0&&\text{in }\overline\Omega,
\end{aligned}
\end{equation}
where \(w^0\in C^2(\overline\Omega)\) satisfies
\[
0\le w^0\le1,
\qquad
w^0(x)=0
\quad\text{whenever }d_\partial(x)\le2\delta_b.
\]
Then \(w_n\to w\) uniformly on
$[0,T]\times\overline\Omega$, where $w$ is the unique Dirichlet viscosity solution of
\begin{equation}\label{eq:limit-inhomogeneous}
\begin{aligned}
\partial_tw-\cK(x,\nabla w,D^2w)&=f&&\text{in }(0,T)\times\Omega,\\
w&=0&&\text{on }(0,T)\times\partial\Omega,\\
w(0,\cdot)&=w^0&&\text{in }\overline\Omega.
\end{aligned}
\end{equation}
\end{lemma}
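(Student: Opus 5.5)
The plan is the Barles--Perthame half-relaxed-limit method, combined with the uniform boundary and initial-time barriers already established and with the equivalence of viscosity formulations.

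\emph{Step 1: uniform bounds and barriers.} By Proposition~\ref{prop:regularized-dirichlet}, each $w_n$ exists and is unique. The global range estimate \eqref{eq:regularized-global-range} gives $-MT\le w_n\le 1$ uniformly in $n$. Since $\nu_n\to0$, we may fix $\overline\nu\ge\sup_n\nu_n$. Lemma~\ref{lem:regularized-boundary} then gives $|w_n(t,x)|\le A_{M,T}d_\partial(x)$ for $d_\partial(x)<\delta$, and Lemma~\ref{lem:initial-time-control} gives $\|w_n(t)-w^0\|_{L^\infty}\le C_0t$. In both estimates the constants are independent of $n$.

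\emph{Step 2: relaxed limits in the interior.} Define $\overline w=\limsup{}^*w_n$ and $\underline w=\liminf{}_*w_n$ on $[0,T]\times\overline\Omega$. Both are bounded, and $\underline w\le\overline w$. By Lemma~\ref{lem:separate-term-vanishing-derivative-stability}, applied on every open $O$ with $\overline O\subset Q_T$, $\overline w$ is a subsolution and $\underline w$ a supersolution in the vanishing-derivative sense. This is precisely where the mismatch of Proposition~\ref{prop:direct-zero-gradient-limits} and Remark~\ref{rem:raw-not-selection} is avoided: nothing is asserted at zero gradient with nonzero Hessian. Proposition~\ref{prop:vanishing-derivative-equivalence} then upgrades both functions to geometric viscosity sub- and supersolutions in $Q_T$.

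\emph{Step 3: boundary behaviour.} The uniform barriers from Step 1 pass to the relaxed limits:
\[
|\overline w(t,x)|,\ |\underline w(t,x)|\le A_{M,T}d_\partial(x)
\quad\text{near }\partial\Omega,
\qquad
|\overline w(t,x)-w^0(x)|,\ |\underline w(t,x)-w^0(x)|\le C_0t .
\]
Hence $\overline w=\underline w=\gamma$ on $\partial_PQ_T$, with continuous attainment. In particular the boundary ordering $\overline w^*\le\underline w_*$ holds there, so the relaxed Dirichlet boundary conditions of Definition~\ref{def:viscosity} are satisfied trivially and no loss of boundary conditions can occur.

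\emph{Step 4: comparison and conclusion.} Theorem~\ref{thm:comparison}, applied with the continuous inhomogeneity $f$, yields $\overline w\le\underline w$ in $Q_T$. Combined with the boundary identities, this gives $\overline w=\underline w=:w$ on $[0,T]\times\overline\Omega$. This $w$ is continuous and is a Dirichlet solution attaining the data. By uniqueness (Proposition~\ref{prop:dirichlet}), it is the solution of \eqref{eq:limit-inhomogeneous}. Equality of the half-relaxed limits on the compact set $[0,T]\times\overline\Omega$ gives uniform convergence by the standard argument.

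One technical point concerns the terminal time. The stability lemma works on open sets in $Q_T$, so I will first work on $[0,T^+)$ with the continuations $\mathcal E_Tf_n\to\mathcal E_Tf$, as in Remark~\ref{rem:closed-time-interval}, and then restrict. This makes $t=T$ an interior time, so uniform convergence holds on the closed interval.

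\emph{Main obstacle.} The main obstacle is Step 2 rather than Step 4. Direct half-relaxed stability fails at zero gradient with a definite Hessian, so I must check that Lemma~\ref{lem:separate-term-vanishing-derivative-stability} applies with $f_n$ merely uniformly convergent. I must also check that the equivalence in Proposition~\ref{prop:vanishing-derivative-equivalence} accepts semicontinuous limits that are defined only as local sub- and supersolutions in $Q_T$.
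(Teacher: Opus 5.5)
Your proposal is correct and follows essentially the same route as the paper. The shared steps are: extension to $[0,T^+)$; the uniform boundary-distance and initial-time barriers; the separate-term vanishing-derivative stability lemma upgraded via Proposition~\ref{prop:vanishing-derivative-equivalence}; comparison on $Q_S$ with $T<S<T^+$; and the standard half-relaxed-limit argument for uniform convergence on the compact set. The only minor difference is in the comparison step: the paper sandwiches $\overline w\le\widehat w\le\underline w$ against the already-constructed limit solution $\widehat w$ from Proposition~\ref{prop:dirichlet}, whereas you compare $\overline w$ with $\underline w$ directly and then identify the common value by uniqueness.
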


\begin{proof}
Since \((\eps_n,\nu_n)\to(0,0)\), both parameter sequences are bounded.
Set
\[
\overline\nu:=\sup_{n\in\mathbb N}\nu_n<\infty.
\]
The boundary-distance estimate is uniform for all positive parameters, and
the initial-time and small-derivative estimates apply uniformly with this
choice of \(\overline\nu\).
Set $T^+:=T+1$ and extend $f_n$ and $f$ by constant continuation:
\[
 \widehat f_n(t,x):=
 \begin{cases}
 f_n(t,x),&0\le t\le T,\\
 f_n(T,x),&T<t\le T^+,
 \end{cases}
 \qquad
 \widehat f(t,x):=
 \begin{cases}
 f(t,x),&0\le t\le T,\\
 f(T,x),&T<t\le T^+.
 \end{cases}
\]
The extended functions take values in $[-M,0]$.  Because the hypothesis $f_n=f=0$ on
$(0,T]\times\partial\Omega$ includes $t=T$, the constant continuations retain the homogeneous value on $(0,T^+]\times\partial\Omega$.  They also satisfy
$\|\widehat f_n-\widehat f\|_{L^\infty([0,T^+]\times\overline\Omega)}\to0$.
Let $\widehat w_n$ and $\widehat w$ be the corresponding Dirichlet viscosity solutions on the
open-terminal space--time domain $[0,T^+)\times\overline\Omega$.  The restriction
$\widehat w_n|_{[0,T]\times\overline\Omega}$ and $w_n$ solve the same fixed-parameter
regularized equation on $Q_T$, with the same function $f_n$, the same initial datum, and the same zero boundary datum on $(0,T)\times\partial\Omega$;
their equality follows from the uniformly parabolic comparison/uniqueness assertion in
Proposition~\ref{prop:regularized-dirichlet}.  Likewise,
$\widehat w|_{[0,T]\times\overline\Omega}$ and $w$ solve the same geometric equation with
$f$ and the same continuous initial values and boundary values on $(0,T]\times\partial\Omega$; their equality follows from
Theorem~\ref{thm:comparison}, equivalently the uniqueness clause of
Proposition~\ref{prop:dirichlet}.  Hence
\[
 \widehat w_n=w_n,\qquad \widehat w=w
 \qquad\text{on }[0,T]\times\overline\Omega.
\]
The semicontinuous limits are therefore formed from the extended sequence.

Define the upper and lower semicontinuous limiting functions on $[0,T^+)\times\overline\Omega$ by
\[
 \overline w(t,x):=\limsup_{\substack{n\to\infty\\(s,y)\to(t,x),\
                    0\le s<T^+,\ y\in\overline\Omega}}\widehat w_n(s,y),
 \qquad
 \underline w(t,x):=\liminf_{\substack{n\to\infty\\(s,y)\to(t,x),\
                    0\le s<T^+,\ y\in\overline\Omega}}\widehat w_n(s,y).
\]
Their boundary values are defined by the same upper and lower limiting procedures from the open space--time domain.  Comparison gives
$-Mt\le \widehat w_n\le1$, so both limiting functions are finite.  In particular,
the extended sequence is uniformly bounded, hence it satisfies the local
uniform-boundedness hypothesis of Lemma~\ref{lem:separate-term-vanishing-derivative-stability} on
every compact subdomain.
By \eqref{eq:divergence-nondivergence}, each $\widehat w_n$ is a viscosity solution of the
nondivergence-form equation
$\partial_t\widehat w_n+\cL^{\eps_n,\nu_n}(x,D\widehat w_n,D^2\widehat w_n)=\widehat f_n$.
Let $\varphi\in C^{1,2}$ be associated with a local maximum or minimum of a semicontinuous
limiting function. If $D_x\varphi\neq0$ at the extremum, Lemma~\ref{lem:away-zero-gradient-convergence}
gives convergence of the principal operators. If $D_x\varphi=0$ and
$D_x^2\varphi=0$ at the extremum, Lemma~\ref{lem:separate-term-vanishing-derivative-stability}
applies on every compact subdomain of $(0,T^+)\times\Omega$, and
\eqref{eq:uniform-small-derivative-bound} gives convergence of the principal term to zero.
Consequently,
$\overline w$ is a subsolution and $\underline w$ is a supersolution of the extended limiting equation in the vanishing-derivative formulation.  By Proposition~\ref{prop:vanishing-derivative-equivalence},
they are, respectively, a relaxed Dirichlet viscosity subsolution and a relaxed Dirichlet viscosity supersolution in the convention of
Definition~\ref{def:viscosity}.

We next identify the parabolic boundary values of the semicontinuous limits. Let
$(t_n,x_n)\to(t_*,y)$, where $t_*\in(0,T^+)$ and $y\in\partial\Omega$.
For all sufficiently large \(n\), either
\(d_\partial(x_n)=0\) or \(0<d_\partial(x_n)<\delta\). If
\(d_\partial(x_n)=0\), then
\[
\widehat w_n(t_n,x_n)=0.
\]
If \(0<d_\partial(x_n)<\delta\), then
Lemma~\ref{lem:regularized-boundary}, applied on \([0,T^+]\), gives
\[
-A_{M,T^+}d_\partial(x_n)
\le \widehat w_n(t_n,x_n)
\le A_{M,T^+}d_\partial(x_n).
\]
Thus this two-sided inequality holds in both cases.
Since $d_\partial(x_n)\to0$,
\[
\limsup_{n\to\infty}\widehat w_n(t_n,x_n)\le0,
\qquad
\liminf_{n\to\infty}\widehat w_n(t_n,x_n)\ge0.
\]
Thus the upper and lower semicontinuous limits satisfy the homogeneous boundary condition on $(0,T^+)\times\partial\Omega$. Moreover, Lemma~\ref{lem:initial-time-control} gives, uniformly in
$n$,
\[
w^0(x)-C_0t
\le \widehat w_n(t,x)
\le w^0(x)+C_0t
\qquad
((t,x)\in[0,T^+)\times\overline\Omega).
\]
If $(t_n,x_n)\to(0,x)$, the continuity of $w^0$ therefore implies
\[
\limsup_{n\to\infty}\widehat w_n(t_n,x_n)\le w^0(x),
\qquad
\liminf_{n\to\infty}\widehat w_n(t_n,x_n)\ge w^0(x).
\]
These inequalities identify the initial values of the semicontinuous limits. By
Proposition~\ref{prop:vanishing-derivative-equivalence}, $\overline w$ and $\underline w$
satisfy the interior subsolution and supersolution inequalities, respectively. If $\zeta\in(0,S)\times\partial\Omega$, the boundary-distance estimate gives
\[
\overline w(\zeta)\le0\le\underline w(\zeta).
\]
If $\zeta=(0,x)$, the initial-time estimate gives
\[
\overline w(0,x)\le w^0(x)\le\underline w(0,x).
\]
Thus, writing $\gamma$ for the prescribed parabolic boundary datum, one has
\[
\overline w(\zeta)-\gamma(\zeta)\le0,
\qquad
\underline w(\zeta)-\gamma(\zeta)\ge0
\qquad(\zeta\in\partial_PQ_S).
\]
These are precisely the first inequalities in the relaxed boundary minimum and maximum
conditions of Definition~\ref{def:viscosity}. Consequently, for every $S\in(0,T^+)$,
$\overline w$ and $\underline w$ are bounded relaxed Dirichlet viscosity sub- and
supersolutions, respectively, in $Q_S$.

Fix $T^\sharp$ with $T<T^\sharp<T^+$.  For every $S$ with
$T<S<T^\sharp$, the established interior and boundary inequalities apply
on $Q_S$.  Its parabolic boundary consists of the sets $\{0\}\times\overline\Omega$ and $(0,S)\times\partial\Omega$, and the
comparison uses the boundary values on those two components.  The semicontinuous boundary bounds already
proved are exactly those required by Theorem~\ref{thm:comparison}.
On $\partial_PQ_S$, the established boundary inequalities give
$\overline w\le\widehat w$ and $\widehat w\le\underline w$, because all three functions
have the common initial value $w^0$ and the common zero boundary value on $(0,S)\times\partial\Omega$.  Applying
Theorem~\ref{thm:comparison} first to the pair
$(\overline w,\widehat w)$ and then to $(\widehat w,\underline w)$ gives
$\overline w\le\widehat w\le\underline w$.  Since always
$\underline w\le\overline w$, comparison therefore identifies
\[
 \overline w=\underline w=\widehat w
 \qquad\text{on }[0,S)\times\overline\Omega.
\]
Since $S\uparrow T^\sharp$ is arbitrary,
\[
 \overline w=\underline w=\widehat w
 \qquad\text{on }[0,T^\sharp)\times\overline\Omega.
\]
Since $T<T^\sharp$, the target set $[0,T]\times\overline\Omega$ is compactly contained in $[0,T^\sharp)\times\overline\Omega$. The equality of the upper and lower semicontinuous limits then implies uniform convergence on the target set.  If uniform convergence failed, there would exist a number $\delta>0$, a
subsequence $n_k\to\infty$, and points
$z_k\in[0,T]\times\overline\Omega$ such that
\[
 \abs{\widehat w_{n_k}(z_k)-\widehat w(z_k)}\ge\delta.
\]
After extracting a further subsequence, $z_k\to z$ by compactness.  If
$\widehat w_{n_k}(z_k)\ge\widehat w(z_k)+\delta$, continuity of $\widehat w$ gives
\[
 \overline w(z)\ge\limsup_{k\to\infty}\widehat w_{n_k}(z_k)
 \ge \widehat w(z)+\delta,
\]
contradicting \(\overline w=\widehat w\). If instead
\(\widehat w_{n_k}(z_k)\le\widehat w(z_k)-\delta\), then continuity of
\(\widehat w\) gives
\[
\underline w(z)
\le\liminf_{k\to\infty}\widehat w_{n_k}(z_k)
\le\widehat w(z)-\delta,
\]
contradicting \(\underline w=\widehat w\).  Thus
$\widehat w_n\to\widehat w$ uniformly on $[0,T]\times\overline\Omega$.
Restricting back to $[0,T]$ proves the asserted convergence of $w_n$ to $w$.
\end{proof}

\section{Proof of the main theorems}\label{sec:main-proofs}
For $v\in X_T$, let $\mathfrak T(v)$ be the restriction to
$[0,T]\times\overline\Omega$ of the unique Dirichlet viscosity solution on
$[0,T^+]\times\overline\Omega$ of
\[
\partial_tw-\cK(x,Dw,D^2w)=f^{v,T}(t,x),\qquad
w|_{\partial\Omega}=0,\qquad w(0)=u^0.
\]
Proposition~\ref{prop:dirichlet} and Lemma~\ref{lem:reaction-inhomogeneity} imply that $\mathfrak T:X_T\to X_T$ is well defined.

\begin{lemma}[Contraction of the reaction map]\label{lem:inhomogeneous-stability-map-contraction}
For every $\lambda>0$ and $v,w\in X_T$,
\begin{equation}\label{eq:reaction-map-contraction}
\norm{\mathfrak T(v)-\mathfrak T(w)}_{\lambda,T}
\le\frac{\mu L_\eta}{\lambda}\norm{v-w}_{\lambda,T}.
\end{equation}
\end{lemma}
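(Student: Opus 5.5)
The contraction estimate will follow from the stability bound of Lemma~\ref{lem:inhomogeneous-stability} combined with the Lipschitz estimate \eqref{eq:inhomogeneity-lipschitz} for the reaction inhomogeneity. The proof then closes with a routine exponential-weight computation.

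Fix $v,w\in X_T$. By definition, $\mathfrak T(v)$ and $\mathfrak T(w)$ are restrictions of the Dirichlet viscosity solutions with inhomogeneities $f^{v,T}$ and $f^{w,T}$ and the common initial datum $u^0$. Lemma~\ref{lem:reaction-inhomogeneity} shows that both inhomogeneities are continuous on $[0,T^+]\times\overline\Omega$, take values in $[-M_\eta,0]$, and vanish on $(0,T^+]\times\partial\Omega$. Moreover, $u^0$ vanishes on $\{d_\partial\le2\delta_b\}$ by the choice of $\delta_b$. These are exactly the hypotheses of Lemma~\ref{lem:inhomogeneous-stability}.

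Apply that lemma on $[0,T]$. By Remark~\ref{rem:closed-time-interval} and the uniqueness in Proposition~\ref{prop:dirichlet}, the restriction coincides with the solution on $Q_T$. Since the initial data coincide, the lemma gives, for $0\le t\le T$,
\[
\|\mathfrak T(v)(t)-\mathfrak T(w)(t)\|_{L^\infty}
\le\int_0^t\|f^{v,T}(s)-f^{w,T}(s)\|_{L^\infty}\,\dd s
\le\mu L_\eta\int_0^t\|v(s)-w(s)\|_{L^\infty}\,\dd s .
\]
The second inequality is \eqref{eq:inhomogeneity-lipschitz}.

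Now insert the weight. For $0\le s\le T$ we have $\|v(s)-w(s)\|_{L^\infty}\le e^{\lambda s}\|v-w\|_{\lambda,T}$. Hence the right-hand side is bounded by
\[
\mu L_\eta\|v-w\|_{\lambda,T}\,\frac{e^{\lambda t}-1}{\lambda}
\le\frac{\mu L_\eta}{\lambda}e^{\lambda t}\|v-w\|_{\lambda,T}.
\]
Multiplying by $e^{-\lambda t}$ and taking the supremum over $t\in[0,T]$ gives \eqref{eq:reaction-map-contraction}.

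The only point requiring care is the first step, namely justifying that Lemma~\ref{lem:inhomogeneous-stability} applies to the restricted solutions on the closed interval. Two facts are needed there. First, the hypothesis $f=g=0$ on $(0,T]\times\partial\Omega$ must hold including $t=T$; this is ensured by the boundary-vanishing clause of Lemma~\ref{lem:reaction-inhomogeneity}. Second, the time $T$ itself must be handled through the extension to $T^+$, so that the comparison argument covers $t=T$ and the supremum includes the terminal time. Beyond this, the argument is a direct computation.
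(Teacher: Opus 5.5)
Your proposal is correct and follows essentially the same route as the paper: apply Lemma~\ref{lem:inhomogeneous-stability} with equal initial data, bound the integrand by \eqref{eq:inhomogeneity-lipschitz}, and insert the exponential weight. The only difference is cosmetic: you drop the $-1$ in $(e^{\lambda t}-1)/\lambda$, whereas the paper keeps the factor $(1-e^{-\lambda t})$; this does not change the stated bound.
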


\begin{proof}
For \(t\in[0,T]\), Lemma~\ref{lem:inhomogeneous-stability} and
\eqref{eq:inhomogeneity-lipschitz} give
\[
\|\mathfrak T(v)(t)-\mathfrak T(w)(t)\|_{L^\infty(\Omega)}
\le
\mu L_\eta\int_0^t
\|v(s)-w(s)\|_{L^\infty(\Omega)}\,\mathrm ds.
\]
Substitution of
\[
\|v(s)-w(s)\|_{L^\infty(\Omega)}
\le e^{\lambda s}\|v-w\|_{\lambda,T}
\]
into the preceding integral inequality gives
\[
e^{-\lambda t}
\|\mathfrak T(v)(t)-\mathfrak T(w)(t)\|_{L^\infty(\Omega)}
\le
\frac{\mu L_\eta}{\lambda}
(1-e^{-\lambda t})\|v-w\|_{\lambda,T}.
\]
Taking the supremum over \(t\in[0,T]\) proves
\eqref{eq:reaction-map-contraction}.
\end{proof}

\begin{proof}[Proof of Theorem~\ref{thm:wellposed}]
Fix \(T>0\) and choose \(\lambda>\mu L_\eta\). By
Lemma~\ref{lem:inhomogeneous-stability-map-contraction}, the map
\(\mathfrak T:X_T\to X_T\) is a contraction. The Banach fixed-point theorem
therefore yields a unique \(u^{(T)}\in X_T\) satisfying
\[
\mathfrak T(u^{(T)})=u^{(T)}.
\]
By the definition of \(\mathfrak T\),
\[
f^{u^{(T)},T}(t,x)
=-\mu R_\eta(x,u^{(T)}(t,x))
\qquad(0\le t\le T),
\]
so \(u^{(T)}\) solves \eqref{eq:limit-flow}. Conversely, every continuous
Dirichlet viscosity solution of \eqref{eq:limit-flow} is a fixed point of
\(\mathfrak T\), and is therefore equal to \(u^{(T)}\).

If \(0<T_1<T_2\), then the restriction of \(u^{(T_2)}\) to
\([0,T_1]\times\overline\Omega\) is a fixed point of the map defined on
\(X_{T_1}\). Uniqueness gives
\[
u^{(T_2)}=u^{(T_1)}
\qquad\text{on }[0,T_1]\times\overline\Omega.
\]
The compatible family \(\{u^{(T)}\}_{T>0}\) defines a unique global
solution.

Since \(R_\eta(x,0)=0\), the constant function \(0\) is a subsolution. Since
\(R_\eta(x,1)\ge0\), the constant function \(1\) is a supersolution. Their
initial values and values on $(0,\infty)\times\partial\Omega$ are ordered with those of \(u\); hence Theorem~\ref{thm:reaction-comparison}
gives
\[
0\le u\le1.
\]

Let \(u\) and \(v\) correspond to initial data \(u^0\) and \(v^0\), and
set
\[
c=\|u^0-v^0\|_{L^\infty(\Omega)}.
\]
Since $c$ is independent of $t$ and $x$,
\[
\partial_t(v+c)=\partial_t v,\qquad D_x(v+c)=D_xv,\qquad
D_x^2(v+c)=D_x^2v.
\]
Let $\varphi\in C^{1,2}$ and suppose that $v+c-\varphi$ has a local
minimum at $(t_0,x_0)$. Then $v-(\varphi-c)$ has a local minimum at the
same point. The viscosity supersolution inequality for $v$ and the
monotonicity of $R_\eta(x,\cdot)$ yield
\begin{align*}
&\partial_t\varphi(t_0,x_0)
+\cLgeoUpper\!\left(x_0,D_x\varphi(t_0,x_0),D_x^2\varphi(t_0,x_0)\right)
+\mu R_\eta\bigl(x_0,\varphi(t_0,x_0)\bigr)\\
&\quad\ge
\mu\left[
R_\eta\bigl(x_0,\varphi(t_0,x_0)\bigr)
-R_\eta\bigl(x_0,\varphi(t_0,x_0)-c\bigr)
\right]\ge0.
\end{align*}
Therefore $v+c$ is a viscosity supersolution of the equation satisfied by
$u$. Moreover,
\[
u^0\le v^0+c,
\qquad
0\le c\quad\text{on $(0,\infty)\times\partial\Omega$}.
\]
Theorem~\ref{thm:reaction-comparison} yields \(u\le v+c\). Interchanging \(u\) and \(v\) gives
\(v\le u+c\), and therefore
\[
\|u(t)-v(t)\|_{L^\infty(\Omega)}
\le
\|u^0-v^0\|_{L^\infty(\Omega)}
\qquad(t\ge0).
\]
\end{proof}

For $v\in X_T$ and $\eps,\nu>0$, let
$\mathfrak T^{\eps,\nu}(v)$ be the restriction to $[0,T]\times\overline\Omega$ of the unique
Dirichlet viscosity solution on $[0,T^+]\times\overline\Omega$ of
\begin{equation}\label{eq:regularized-map-equation}
\begin{cases}
\partial_tw-\nu\Delta w
-A_\eps(\nabla w)\operatorname{div}\!\left(G\dfrac{\nabla w}{A_\eps(\nabla w)}\right)
=f^{v,T}(t,x),&(t,x)\in(0,T^+)\times\Omega,\\
w=0,&(t,x)\in(0,T^+)\times\partial\Omega,\\
w(0,x)=u^0(x),&x\in\overline\Omega.
\end{cases}
\end{equation}
Proposition~\ref{prop:regularized-dirichlet} makes the map well defined entirely within the
Dirichlet viscosity framework.

\begin{lemma}[Uniform contraction of the regularized maps]\label{lem:regularized-map-contraction}
For every $\lambda>0$,
\[
\norm{\mathfrak T^{\eps,\nu}(v)-\mathfrak T^{\eps,\nu}(w)}_{\lambda,T}
\le\frac{\mu L_\eta}{\lambda}\norm{v-w}_{\lambda,T},
\]
with a contraction constant independent of $\eps$ and $\nu$.
\end{lemma}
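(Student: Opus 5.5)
The argument repeats the proof of Lemma~\ref{lem:inhomogeneous-stability-map-contraction}. The only change is that Lemma~\ref{lem:inhomogeneous-stability} is replaced by its fixed-regularization counterpart, Lemma~\ref{lem:regularized-inhomogeneous-stability}.

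Fix $\eps,\nu>0$ and $v,w\in X_T$. By Lemma~\ref{lem:reaction-inhomogeneity}, the continued inhomogeneities $f^{v,T}$ and $f^{w,T}$ lie in $C([0,T^+]\times\overline\Omega)$, take values in $[-M_\eta,0]$, and vanish on $[0,T^+]\times\partial\Omega$. The common initial datum $u^0$ vanishes on $\{d_\partial\le2\delta_b\}$ by \textup{(A4)}. Hence both problems \eqref{eq:regularized-map-equation} fall in the class of Proposition~\ref{prop:regularized-dirichlet}, and Lemma~\ref{lem:regularized-inhomogeneous-stability} applies to them with $M=M_\eta$.

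Because the initial data coincide, that lemma gives, for $0\le t\le T$,
\[
\|\mathfrak T^{\eps,\nu}(v)(t)-\mathfrak T^{\eps,\nu}(w)(t)\|_{L^\infty(\Omega)}
\le\int_0^t\|f^{v,T}(s)-f^{w,T}(s)\|_{L^\infty(\Omega)}\,\dd s.
\]
There is one point to check here. The lemma is stated on $[0,T]$, while the solutions live on $[0,T^+]$. I would restrict the solutions to $[0,T]$; by the closed-time-interval uniqueness in Proposition~\ref{prop:regularized-dirichlet}, the restrictions are the solutions on $Q_T$ with data $f^{v,T}|_{[0,T]}$ and $f^{w,T}|_{[0,T]}$.

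Next, \eqref{eq:inhomogeneity-lipschitz} bounds the integrand by $\mu L_\eta\|v(s)-w(s)\|_{L^\infty}\le\mu L_\eta e^{\lambda s}\|v-w\|_{\lambda,T}$. Integrating gives
\[
\|\mathfrak T^{\eps,\nu}(v)(t)-\mathfrak T^{\eps,\nu}(w)(t)\|_{L^\infty(\Omega)}\le\frac{\mu L_\eta}{\lambda}\bigl(e^{\lambda t}-1\bigr)\|v-w\|_{\lambda,T}.
\]
Multiplying by $e^{-\lambda t}$ and taking the supremum over $t\in[0,T]$ yields the claim.

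The constant $\mu L_\eta/\lambda$ involves only $\mu$, $\|\Lambda\|_{L^\infty}$ and $\|h_\eta\|_{L^\infty}$. The stability estimate \eqref{eq:regularized-inhomogeneous-stability} itself carries no $\eps$- or $\nu$-dependent constant, since it rests only on comparison for the fixed-parameter operator, which is translation invariant in $u$. The contraction constant is therefore independent of $(\eps,\nu)$.

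The main obstacle is only bookkeeping: confirming that the hypotheses of Lemma~\ref{lem:regularized-inhomogeneous-stability} hold for every $\eps,\nu$, in particular the vanishing of $f^{v,T}$ on the lateral boundary. The $\eps$-dependence of the regularized operator never enters, because the shifted functions $v_g^{\eps,\nu}\pm a(t)$ used in that lemma leave $D_x$ and $D_x^2$ unchanged.
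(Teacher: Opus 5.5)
Your proof is correct and matches the paper's argument: apply Lemma~\ref{lem:regularized-inhomogeneous-stability} with equal initial data, bound the integrand by the Lipschitz estimate \eqref{eq:inhomogeneity-lipschitz}, and repeat the weighted-norm computation from Lemma~\ref{lem:inhomogeneous-stability-map-contraction}. Your extra bookkeeping is consistent with the paper, which leaves it implicit: you check that $f^{v,T}$ is admissible, and you identify the restriction from $[0,T^+]$ with the $Q_T$ solution via uniqueness.
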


\begin{proof}
Lemma~\ref{lem:regularized-inhomogeneous-stability} gives, for every
\(t\in[0,T]\),
\[
\|\mathfrak T^{\eps,\nu}(v)(t)-
\mathfrak T^{\eps,\nu}(w)(t)\|_{L^\infty(\Omega)}
\le
\mu L_\eta\int_0^t
\|v(s)-w(s)\|_{L^\infty(\Omega)}\,\mathrm ds.
\]
The weighted-norm calculation in the proof of
Lemma~\ref{lem:inhomogeneous-stability-map-contraction} then yields
\[
\|\mathfrak T^{\eps,\nu}(v)-
\mathfrak T^{\eps,\nu}(w)\|_{\lambda,T}
\le
\frac{\mu L_\eta}{\lambda}
\|v-w\|_{\lambda,T}.
\]
The constant is independent of \(\eps\) and \(\nu\).
\end{proof}

\begin{proposition}[Well-posedness of the regularized problem]\label{prop:regularized-full}
For every $\eps,\nu>0$, problem \eqref{eq:regularized-flow} has a unique global
Dirichlet viscosity solution $u^{\eps,\nu}$ that attains the prescribed initial and
boundary data. It satisfies $0\le u^{\eps,\nu}\le1$.
\end{proposition}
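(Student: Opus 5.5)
The plan mirrors the proof of Theorem~\ref{thm:wellposed}, with the geometric solution map $\mathfrak T$ replaced by $\mathfrak T^{\eps,\nu}$.

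\textbf{Step 1: fixed point on a finite interval.} Fix $T>0$ and $\lambda>\mu L_\eta$. Proposition~\ref{prop:regularized-dirichlet} and Lemma~\ref{lem:reaction-inhomogeneity} show that $\mathfrak T^{\eps,\nu}$ maps $X_T$ into $X_T$: for $v\in X_T$, the datum $f^{v,T}$ is continuous, lies in $[-M_\eta,0]$, and vanishes on $[0,T^+]\times\partial\Omega$. By (A4), $u^0\in C^{2+\alpha}$ and $u^0=0$ on $\{d_\partial\le2\delta_b\}$. The resulting solution is continuous and attains the initial value $u^0$ and the zero boundary value, so its restriction belongs to $X_T$. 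Lemma~\ref{lem:regularized-map-contraction} makes $\mathfrak T^{\eps,\nu}$ a strict contraction in $\norm{\cdot}_{\lambda,T}$. Since $X_T$ is complete, the Banach fixed-point theorem gives a unique $u^{(T)}\in X_T$ with $\mathfrak T^{\eps,\nu}(u^{(T)})=u^{(T)}$. On $[0,T]$ we have $\mathcal E_Tu^{(T)}=u^{(T)}$, so $f^{u^{(T)},T}=-\mu R_\eta(x,u^{(T)})$ there. Hence $u^{(T)}$ is a Dirichlet viscosity solution of $\partial_tu+\cL^{\eps,\nu}(x,Du,D^2u)+\mu R_\eta(x,u)=0$ in $Q_T$, which is the nondivergence form of \eqref{eq:regularized-flow} via \eqref{eq:divergence-nondivergence}, and it attains the data.

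\textbf{Step 2: uniqueness and the global solution.} Proposition~\ref{prop:regularized-reaction-comparison} gives uniqueness directly. Any two continuous solutions attaining the same data are ordered both ways on $\partial_PQ_T$, so they coincide in $Q_T$. For $T_1<T_2$, the restriction of $u^{(T_2)}$ to $[0,T_1]\times\overline\Omega$ is therefore equal to $u^{(T_1)}$, so the family is compatible and defines a unique global solution $u^{\eps,\nu}$.

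\textbf{Step 3: the bound $0\le u^{\eps,\nu}\le1$.} Again use Proposition~\ref{prop:regularized-reaction-comparison}.
\begin{itemize}[leftmargin=2em]
\item The constant $0$ is a subsolution: $\cL^{\eps,\nu}(x,0,0)=0$, and $R_\eta(x,0)=\Lambda H_\eta(-q)=0$ because $q>\eta$.
\item The constant $1$ is a supersolution, since $R_\eta(x,1)\ge0$.
\item Boundary ordering holds: $0\le u^0\le1$ initially, and $0\le0\le1$ on $(0,T)\times\partial\Omega$.
\end{itemize}
Comparison on each $Q_T$ then gives $0\le u^{\eps,\nu}\le1$.

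\textbf{Main obstacle.} The step needing the most care is the closed-interval bookkeeping in Step 1. $\mathfrak T^{\eps,\nu}(v)$ solves with the continued inhomogeneity $f^{v,T}$ on $[0,T^+)$ and is then restricted, so one must check two things. First, the fixed-point relation genuinely yields the reaction equation on all of $Q_T$, including up to $t=T$; this uses Remark~\ref{rem:closed-time-interval} together with uniqueness from Proposition~\ref{prop:regularized-dirichlet}. Second, membership in $X_T$ includes the boundary values at $t=T$, which follows from continuity of the solution on $[0,T^+)$ and $T<T^+$. Once this is settled, every remaining step is a direct invocation of an earlier result.
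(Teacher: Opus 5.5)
Your proposal is correct and follows the paper's proof: a Banach fixed point for $\mathfrak T^{\eps,\nu}$ on $X_T$ via Lemma~\ref{lem:regularized-map-contraction}, patching of the finite-time solutions into a global one, and comparison (Proposition~\ref{prop:regularized-reaction-comparison}) against the constants $0$ and $1$. The one small difference is that you obtain uniqueness and compatibility directly from the reaction comparison principle rather than from uniqueness of the fixed point, which is slightly cleaner because it avoids checking that every solution is a fixed point.
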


\begin{proof}
Fix \(T>0\) and choose \(\lambda>\mu L_\eta\). By
Lemma~\ref{lem:regularized-map-contraction},
\(\mathfrak T^{\eps,\nu}:X_T\to X_T\) is a contraction. The Banach
fixed-point theorem gives a unique fixed point \(u^{\eps,\nu,(T)}\), and the
fixed-point identity is equivalent to \eqref{eq:regularized-flow} on
\([0,T]\). If \(T_1<T_2\), uniqueness on \([0,T_1]\) identifies
\(u^{\eps,\nu,(T_2)}|_{[0,T_1]}\) with
\(u^{\eps,\nu,(T_1)}\). Hence the finite-time solutions define a unique
global Dirichlet viscosity solution.

The constant functions \(0\) and \(1\) are, respectively, a subsolution and
a supersolution. Proposition~\ref{prop:regularized-reaction-comparison} gives
\[
0\le u^{\eps,\nu}\le1.
\]
\end{proof}

\begin{proof}[Proof of Theorem~\ref{thm:limit}]
Fix \(T>0\) and choose \(\lambda>\mu L_\eta\). Let \(u\) be the fixed point
of \(\mathfrak T\), and let
\(u_n=u^{\eps_n,\nu_n}\) be the fixed point of
\(\mathfrak T^{\eps_n,\nu_n}\). The functions
\(\mathfrak T^{\eps_n,\nu_n}(u)\) and \(\mathfrak T(u)\) have the same
initial data and boundary data on $(0,T]\times\partial\Omega$ and satisfy equations with the common continuous
inhomogeneity
\[
f^{u,T}(t,x)
=-\mu R_\eta\bigl(x,(\mathcal E_Tu)(t,x)\bigr).
\]
Thus the reaction is evaluated at the limiting fixed point \(u\) in both
equations to which Lemma~\ref{lem:vanishing-regularization-selection} is
applied. At this stage, no limit is taken directly in
\(R_\eta(x,u^{\eps_n,\nu_n})\). The convergence for this common
inhomogeneity is subsequently transferred to the nonlinear fixed points by
the contraction estimate below. Lemma~\ref{lem:vanishing-regularization-selection} gives
\[
\delta_n(T)
:=
\|\mathfrak T^{\eps_n,\nu_n}(u)-\mathfrak T(u)\|_{\lambda,T}
\longrightarrow0.
\]
Using the fixed-point identities and
Lemma~\ref{lem:regularized-map-contraction},
\begin{align*}
\|u_n-u\|_{\lambda,T}
&\le
\|\mathfrak T^{\eps_n,\nu_n}(u_n)
-\mathfrak T^{\eps_n,\nu_n}(u)\|_{\lambda,T}
+\delta_n(T)\\
&\le
\frac{\mu L_\eta}{\lambda}\|u_n-u\|_{\lambda,T}
+\delta_n(T).
\end{align*}
Therefore
\[
\|u_n-u\|_{\lambda,T}
\le
\frac{\delta_n(T)}{1-\mu L_\eta/\lambda}
\longrightarrow0.
\]
Since
\[
e^{-\lambda T}\|z\|_{C([0,T]\times\overline\Omega)}
\le
\|z\|_{\lambda,T}
\le
\|z\|_{C([0,T]\times\overline\Omega)},
\]
convergence in the weighted norm is equivalent to uniform convergence on
\([0,T]\times\overline\Omega\). This proves \eqref{eq:localuniform} for
every sequence \((\eps_n,\nu_n)\to(0,0)\).
\end{proof}

\section{Conclusion}\label{sec:conclusion}

The limiting weighted geometric equation with monotone reaction admits a unique
global viscosity solution, preserves the interval $[0,1]$, and is nonexpansive in the
supremum norm. Under the mean-convexity and boundary-neighborhood assumptions, this
solution extends continuously to the boundary and attains the prescribed homogeneous
Dirichlet data.

The zero-gradient analysis distinguishes the direct joint limits of the regularized
operators from the geometric directional extensions and identifies the latter through
the vanishing-derivative formulation. Treating the principal and reaction terms
separately then yields uniform convergence of the regularized Dirichlet viscosity
solutions on every finite time interval for arbitrary joint decay
$(\eps,\nu)\to(0,0)$. Together with the fixed-parameter classical theory in
\cite{Uba2026Regularized}, these results provide a rigorous analytical basis for the
regularized subjective-surface model in 3D and 3D+time microscopy segmentation.

\end{document}